\newtheorem{theorem}{Theorem}[section]
\newtheorem{lemma}[theorem]{Lemma}
\newtheorem{proposition}[theorem]{Proposition}
\theoremstyle{definition}
\newtheorem{remark}[theorem]{Remark}
\newcommand{\ud}{\mathrm{d}}
\newcommand{\cee}{\mathbb{C}}
\newcommand{\er}{\mathbb{R}}
\newcommand{\lam}{\lambda}
\newcommand{\enn}{\mathbb{N}}
\newcommand{\bol}{\hfill\square\\}
\newcommand{\til}{\tilde}
\numberwithin{equation}{section}
\title{Multiple Meixner-Pollaczek polynomials \\ and the six-vertex model}
\author{Martin Bender\footnotemark[1],\quad Steven Delvaux\footnotemark[2],\quad Arno B.~J.~Kuijlaars\footnotemark[2]}
\date{\today}
\begin{document}

\maketitle
\renewcommand{\thefootnote}{\fnsymbol{footnote}}
\footnotetext[1]{ MSRI, 17 Gauss Way, Berkeley, CA 94720-5070. email:
mbender@msri.org} \footnotetext[2]{Department of Mathematics, Katholieke
Universiteit Leuven, Celestijnenlaan 200B, B-3001 Leuven, Belgium. email:
\{steven.delvaux, arno.kuijlaars\}\symbol{'100}wis.kuleuven.be.\\  Steven
Delvaux is a Postdoctoral Fellow of the Fund for Scientific Research - Flanders
(Belgium). Arno Kuijlaars is supported by K.U. Leuven research grant OT/08/33,
FWO-Flanders projects G.0427.09 and G.0641.11, by the Belgian Interuniversity
Attraction Pole P06/02, and by grant MTM2008-06689-C02-01 of the Spanish
Ministry of Science and Innovation. }

\begin{abstract}
We study multiple orthogonal polynomials of Meixner-Pollaczek type with respect
to a symmetric system of two orthogonality measures. Our main result is that
the limiting distribution of the zeros of these polynomials is one component of
the solution to a constrained vector equilibrium problem. We also provide a
Rodrigues formula and closed expressions for the recurrence coefficients. The
proof of the main result follows from a connection with the eigenvalues of
block Toeplitz matrices, for which we provide some general results of independent interest. 

The motivation for this paper is the study of a model in statistical mechanics,
the so-called six-vertex model with domain wall boundary conditions, in a
particular regime known as the free fermion line. We show how the multiple
Meixner-Pollaczek polynomials arise in an inhomogeneous version of this model.

\textbf{Keywords}: multiple orthogonal polynomial, Meixner-Pollaczek
polynomial, recurrence relation, block Toeplitz matrix, potential theory,
six-vertex model.
\end{abstract}

\section{Introduction}
\label{section:introduction}

In this paper we study a system of polynomials, orthogonal with
respect to two different weight functions of Meixner-Pollaczek type. Our work
is motivated by the analysis of the six-vertex model in statistical mechanics
introduced in \cite{Kor} and studied in many papers since then, see e.g.\ 
\cite{Bax,ICK,Iz,Kor,Kup,KZ,Zinn}. Colomo and Pronko \cite{CP1,CP2} 
studied the role of orthogonal polynomials in the six-vertex model, and
in particular the Meixner-Pollaczek polynomials. 
An object of major physical interest is the
\emph{partition function} of this model. A rather complete analysis of the
large $n$ asymptotics in the homogeneous case is based on the analysis of a
Riemann-Hilbert problem for orthogonal polynomials \cite{BF,BL1,BL2,BL3}. In the
inhomogeneous case, the corresponding reasoning leads to questions of
asymptotics of \emph{multiple orthogonal} polynomials, as we will discuss in
Section~\ref{section:sixvertex}.

Here we will focus on a particular regime of the six-vertex model, known as the
``free fermion line''. In the homogeneous case, this case is trivial, the
partition function being identically $1$. Also in the inhomogeneous case, a
closed expression for the partition function can be calculated explicitly, so
the present results do not give any new insights into the original model, but
should rather be considered as providing exact and asymptotic information in its own right on the
associated polynomials, the \emph{multiple
Meixner-Pollaczek polynomials}.

An important tool in the study of (usual) orthogonal polynomials $P_n$ on
the real line, appropriately scaled in order to have $n$-dependent weights, is
that their limiting zero distribution satisfies an equilibrium problem. This
equilibrium problem is an important ingredient for the steepest descent analysis of the
Riemann-Hilbert problem for orthogonal polynomials, thereby allowing to obtain
strong and uniform asymptotics of the polynomials, see e.g.\
\cite{DKMVZ1,DKMVZ2}.

For the case of multiple orthogonal polynomials, however, no general result is
known about the existence of an equilibrium problem. The aim of this paper is
to obtain such an equilibrium problem for multiple Meixner-Pollaczek
polynomials. The equilibrium problem will be posed in terms of a couple of
measures $(\nu_1,\nu_2)$ and it involves both an external source acting on
$\nu_1$ and a constraint acting on $\nu_2$. This structure is very similar to
the equilibrium problem for the GUE with external source model, \cite{BK}.

Our method for obtaining the equilibrium problem is similar to that applied to
other systems of multiple orthogonal polynomials \cite{Geudens,Roman}, but a
distinguishing feature is the characterization in terms of the eigenvalue
distributions of block Toeplitz matrices, rather than usual (scalar) Toeplitz
matrices. We will provide some general results on this topic that are of
independent interest. Along the way we also obtain Rodrigues formulae and
closed expressions for the recurrence coefficients for multiple
Meixner-Pollaczek polynomials.

Inspired by the scalar case \cite{DKMVZ1,DKMVZ2}, one might hope that the
equilibrium problem presented in this paper can be used in the steepest descent
analysis of the Riemann-Hilbert problem for multiple Meixner-Pollaczek
polynomials, thereby obtaining strong and uniform asymptotics for these
polynomials. This approach, although interesting, will not be carried out in
this paper. We also hope that our equilibrium problem might serve as an
inspiration to obtain similar results for the general inhomogeneous six-vertex
model, not necessarily on the free fermion line, which in turn could serve as
the first step in finding the large $n$ asymptotic analysis for this model.

\section{Statement of results}
\label{section:statementresults}

\subsection{Multiple Meixner-Pollaczek polynomials}
\label{subsection:MMPP}

Let $w_1$ and  $w_2$ denote two distinct weight functions of Meixner-Pollaczek type; $w_j:\er\to\er^+$ with
\begin{equation}\label{def:w12}
w_j(x)=\frac{1}{2\pi}e^{2t_jx}\left|\Gamma\left(\lambda+ix\right)\right|^2,\quad
j=1,2,
\end{equation}
where $t_1,t_2\in (-\pi/2,\pi/2)$, $t_1\ne t_2$ and $\lambda>0$ are fixed
parameters and $\Gamma$ denotes Euler's gamma function
\cite{AS}. Note that in \eqref{def:w12} the gamma function is evaluated in a
complex argument and that
$$\left|\Gamma\left(\lambda+ix\right)\right|^2 =
\Gamma\left(\lambda+ix\right)\Gamma\left(\lambda-ix\right),\quad \textrm{for
any }\lambda,x\in\er.$$ Furthermore, for $\lambda$ fixed, $\left|\Gamma\left(\lambda+ix\right)\right|^2\sim e^{-\pi |x|}$ as $|x|\to \infty$, so the restrictions on $t_j$ guarantee that $w_j$ is exponentially decaying for $x\to \pm \infty$.

\begin{lemma}\label{lemma:interlacing} (Existence, uniqueness, real and interlacing zeros)
For any nonnegative integers $k_1$ and $k_2$, there is
a unique monic polynomial $P_{k_1,k_2} $  of degree $k=k_1+k_2$ satisfying
the orthogonality conditions
\begin{equation*}
\int_{-\infty}^{\infty} P_{k_1,k_2}(x)x^m w_j(x)\ud x= 0, \quad\textrm{ for
}m=0,\ldots, k_j-1,\quad j=1,2.
\end{equation*}

The zeros of these polynomials are  
 real and interlacing, in the sense that each
$P_{k_1,k_2}$ has $k$ distinct real
zeros $x_1^{k_1,k_2}<x_2^{k_1,k_2}<\cdots < x_k^{k_1,k_2}$ such that
$x_j^{k_1,k_2}<x_j^{k_1-1,k_2}<x_{j+1}^{k_1,k_2}$  and $x_j^{k_1,k_2}<x_j^{k_1,k_2-1}<x_{j+1}^{k_1,k_2}$
whenever $1\leq j\leq k-1$.
\end{lemma}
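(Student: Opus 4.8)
The plan is to deduce all three assertions from the fact that $(w_1,w_2)$ forms an AT (algebraic Chebyshev) system, which in turn rests on the observation that the ratio $w_2/w_1=e^{cx}$ with $c=2(t_2-t_1)\neq 0$ is a strictly monotone exponential. Concretely, I would first prove the auxiliary claim that for any polynomials $A_1,A_2$ with $\deg A_j\le k_j-1$, not both zero, the combination $A_1(x)w_1(x)+A_2(x)w_2(x)$ has at most $k-1$ real zeros counting multiplicity. Since $w_1>0$, this is equivalent to bounding the real zeros of the exponential polynomial $A_1(x)+A_2(x)e^{cx}$, which follows from the classical Rolle/induction argument: after multiplying by $e^{-cx}$ and differentiating, $\deg A_2$ drops by one while the exponential structure is preserved, and the base case $A_2\equiv 0$ is the fundamental theorem of algebra. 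This shows that $\{x^m w_1\}_{m=0}^{k_1-1}\cup\{x^m w_2\}_{m=0}^{k_2-1}$ is a Chebyshev system of dimension $k$.

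For existence and uniqueness, I would note that the orthogonality conditions form a square linear system of $k$ equations in the $k$ unknown lower-order coefficients of a monic polynomial of degree $k$, so it suffices to show the homogeneous system has only the trivial solution. Suppose $Q\neq 0$ has degree $\le k-1$ and is orthogonal to all the $x^m w_j$; then $Q$ has at most $k-1$ sign changes, say at $y_1<\cdots<y_\ell$ with $\ell\le k-1$. Using the Chebyshev property I can choose a nonzero $A=A_1w_1+A_2w_2$ in the system that changes sign exactly at these $\ell\le k-1$ points, so that $Q\,A$ does not change sign and $\int Q\,A\,\ud x\neq 0$; but orthogonality gives $\int Q\,A\,\ud x=0$, a contradiction. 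Hence $Q\equiv 0$, the system is nonsingular, and $P_{k_1,k_2}$ exists and is unique. The same sign-change argument, now applied to the monic $P_{k_1,k_2}$ itself, shows it must have at least $k$ sign changes on $\er$; since its degree is $k$, it has exactly $k$ simple real zeros.

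For the interlacing I would exploit that both $P_{k_1,k_2}$ (degree $k$) and $P_{k_1-1,k_2}$ (degree $k-1$) are orthogonal to the common $(k-1)$-dimensional Chebyshev subsystem $\mathcal{H}=\{x^m w_1\}_{m=0}^{k_1-2}\cup\{x^m w_2\}_{m=0}^{k_2-1}$. Writing $p=P_{k_1,k_2}$ and $q=P_{k_1-1,k_2}$, every nonzero combination $\alpha p+\beta q$ is orthogonal to $\mathcal{H}$ and hence, by the sign-change argument applied to $\mathcal{H}$, has at least $k-1$ sign changes. When $\alpha\neq0$ the combination has degree exactly $k$, so it has at least $k-1$ real zeros of odd multiplicity; since the number of real zeros counted with multiplicity has the same parity as $k$, it must in fact have $k$ simple real zeros. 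Thus $p-\lam q$ has $k$ distinct real zeros for every $\lam\in\er$. This also rules out a common zero $x_0$ of $p$ and $q$, since choosing $\lam=p'(x_0)/q'(x_0)$ (well defined because $q$ has simple zeros) would force a double zero of $p-\lam q$.

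Finally I would run the standard continuity argument: as $\lam$ increases over $\er$ the $k$ zeros of $p-\lam q$ depend continuously on $\lam$ and never collide; at $\lam=0$ they are the zeros of $p$, while as $\lam\to\pm\infty$ exactly $k-1$ of them converge to the zeros of $q$ and one escapes to $\pm\infty$ (the direction of escape being fixed by the positive leading coefficients of $p$ and $q$). Since the zeros stay strictly ordered throughout, this deformation forces the zeros of $q$ to interlace those of $p$ in precisely the asserted fashion $x_j^{k_1,k_2}<x_j^{k_1-1,k_2}<x_{j+1}^{k_1,k_2}$; the relation involving $P_{k_1,k_2-1}$ follows by the identical argument with the roles of $w_1$ and $w_2$ interchanged, using the subsystem $\{x^m w_1\}_{m=0}^{k_1-1}\cup\{x^m w_2\}_{m=0}^{k_2-2}$. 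I expect the interlacing to be the main obstacle: existence, uniqueness and reality of the zeros are routine once the AT property is in hand, whereas the interlacing requires the more delicate parity-plus-continuity analysis of the family $p-\lam q$ and careful tracking of the escaping zero.
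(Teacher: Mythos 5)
Your proposal follows essentially the same route as the paper: the paper reduces everything to the fact that $A(x)e^{2(t_1-t_2)x}+B(x)$ has at most $\deg A+\deg B+1$ real zeros (proved by the same Rolle/induction argument you describe), which establishes the AT-system property, and then invokes Kershaw's theorem to conclude existence, uniqueness, reality and interlacing of the zeros. The only difference is that you additionally write out a correct self-contained derivation of the consequences (the sign-change argument for existence/uniqueness/real simple zeros, and the parity-plus-continuity analysis of $p-\lam q$ for interlacing), which is precisely the content the paper delegates to the cited reference.
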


Lemma~\ref{lemma:interlacing} will be proved in
Section~\ref{subsection:proofinterlacing}. 

In analogy with the case of Meixner-Pollaczek orthogonal polynomials
\cite{Koekoek} we refer to the $P_{k_1,k_2}$ as \emph{multiple
Meixner-Pollaczek polynomials}; see also \cite[Sec.~4.3.3]{BCV}. These
polynomials are related to the six-vertex model; see
Section~\ref{section:sixvertex}. For information on other systems of multiple
orthogonal polynomials in the literature, see e.g.\ \cite{Apt,VA}.

In this paper we will derive a Rodrigues type formula for the polynomials
$P_{k_1,k_2}$, enabling us to compute explicitly the following four term
recurrence relations, to be proved  in
Section~\ref{section:recurrence}.

\begin{theorem}\label{theorem:recurrenceMMPP} (Recurrence relations)
Let $t_1, t_2 \in (-\pi/2, \pi/2)$ with $t_1 \neq t_2$. Then, for nonnegative
integers $k_1$ and $k_2$, the multiple Meixner-Polleczek polynomials satisfy
the recurrence relations
\begin{equation}\label{recurrenceMP}
P_{k_1+1,k_2}(x)
=\left(x-a_{k_1,k_2}^{t_1,t_2}\right)P_{k_1,k_2}(x)-b_{k_1,k_2}^{t_1,t_2}P_{k_1,k_2-1}(x)
-c_{k_1,k_2}^{t_1,t_2}P_{k_1-1,k_2-1}(x),
\end{equation}
and
\begin{equation}\label{recurrenceMP2}
P_{k_1,k_2+1}(x)
=\left(x-a_{k_2,k_1}^{t_2,t_1}\right)P_{k_1,k_2}(x)
-b_{k_2,k_1}^{t_2,t_1}P_{k_1-1,k_2}(x)-c_{k_2,k_1}^{t_2,t_1}P_{k_1-1,k_2-1}(x),
\end{equation}
where
\begin{equation*}
a_{k_1,k_2}^{t_1,t_2}=\frac{(k+k_1+2\lambda)}{2}\tan t_1+\frac{k_2}{2}\tan t_2,
\end{equation*}
\begin{equation*}
b_{k_1,k_2}^{t_1,t_2}=\frac{(k+2\lambda-1)}{4}\left(\frac{k_1}{\cos^2  \! t_1}
+\frac{k_2}{\cos^2  \! t_2}\right),
\end{equation*}
\begin{equation*}
c_{k_1,k_2}^{t_1,t_2}=\frac{k_1(k+2\lambda-1)(k+2\lambda-2)(\tan t_1-\tan
t_2)}{8\cos^2  \! t_1},
\end{equation*}
$k=k_1+k_2$, and where we set $P_{-1,k_2}\equiv 0$, $P_{k_1,-1}\equiv 0$
for any $k_1,k_2$.
\end{theorem}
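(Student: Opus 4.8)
The plan is to establish the first recurrence \eqref{recurrenceMP}; the second recurrence \eqref{recurrenceMP2} then follows at no extra cost from the symmetry of the construction. Indeed, interchanging the two weights shows that the polynomial $P_{k_1,k_2}$ built from $(w_1,w_2)$ with parameters $(t_1,t_2)$ is the same object as the one built from $(w_2,w_1)$ with parameters $(t_2,t_1)$ after exchanging the two indices. Applying \eqref{recurrenceMP} under the substitution $k_1\leftrightarrow k_2$, $t_1\leftrightarrow t_2$ therefore reproduces \eqref{recurrenceMP2} together with the precise index pattern $a_{k_2,k_1}^{t_2,t_1}$, $b_{k_2,k_1}^{t_2,t_1}$, $c_{k_2,k_1}^{t_2,t_1}$ recorded in the statement, so only \eqref{recurrenceMP} needs a genuine proof.

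For \eqref{recurrenceMP} I would proceed in two stages: first show that a relation of the stated shape exists, then determine the three coefficients. Set $R:=xP_{k_1,k_2}-P_{k_1+1,k_2}$. Since $xP_{k_1,k_2}$ and $P_{k_1+1,k_2}$ are both monic of degree $k+1$, $R$ has degree at most $k$. Multiplying by $x$ costs one degree in the $w_1$- and $w_2$-orthogonality conditions of Lemma~\ref{lemma:interlacing}, while $P_{k_1+1,k_2}$ inherits the stronger conditions, so one checks that $R$ is orthogonal to $x^m w_1$ for $m\le k_1-2$ and to $x^m w_2$ for $m\le k_2-2$. By perfectness of the system (which underlies the existence and uniqueness in Lemma~\ref{lemma:interlacing}), these $k-2$ functionals are linearly independent on polynomials of degree $\le k$, so their common null space has dimension exactly $3$. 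The three polynomials $P_{k_1,k_2}$, $P_{k_1,k_2-1}$, $P_{k_1-1,k_2-1}$ all lie in this null space and, having distinct degrees $k,k-1,k-2$, are linearly independent; hence they form a basis of it and $R$ is a combination of exactly these three. This is \eqref{recurrenceMP}, which is simply the four-term step-line recurrence of the system, the four multi-indices being consecutive along a step-line.

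It remains to evaluate the coefficients. Comparing the coefficient of $x^{k}$ on the two sides of \eqref{recurrenceMP} identifies $a_{k_1,k_2}^{t_1,t_2}$ as the difference $p_{k_1,k_2}-p_{k_1+1,k_2}$ of the subleading coefficients of consecutive monic polynomials. Pairing \eqref{recurrenceMP} with the functional $\int\,\cdot\;x^{k_1-1}w_1\,\ud x$ annihilates both the $P_{k_1,k_2}$ and the $P_{k_1,k_2-1}$ terms and exhibits $c_{k_1,k_2}^{t_1,t_2}$ as a ratio of the weighted integrals $\int P_{k_1,k_2}\,x^{k_1}w_1\,\ud x$ and $\int P_{k_1-1,k_2-1}\,x^{k_1-1}w_1\,\ud x$; pairing instead with $\int\,\cdot\;x^{k_2-1}w_2\,\ud x$ then yields $b_{k_1,k_2}^{t_1,t_2}$ once $c$ is known. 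Each of these ingredients — the subleading coefficients $p_{k_1,k_2}$ and the weighted ``norm'' integrals $\int P_{k_1,k_2}\,x^{k_j}w_j\,\ud x$ — is to be computed from the Rodrigues formula for $P_{k_1,k_2}$, by summation by parts against the imaginary-direction shift operator attached to each weight and repeated use of the functional equation $|\Gamma(\lambda+1+ix)|^2=(\lambda^2+x^2)|\Gamma(\lambda+ix)|^2$, which reduces everything to gamma-type integrals.

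The main obstacle is twofold. First, one must set up the correct Rodrigues formula carrying the two \emph{distinct} shift operators, one per weight $w_j$, and verify that its iterated application produces $P_{k_1,k_2}$ times the weight. Second, the resulting gamma-function integrals must be evaluated and simplified to the advertised closed forms; I expect the bookkeeping of the factors $(k+2\lambda-1)$ and $(k+2\lambda-2)$ in $b_{k_1,k_2}^{t_1,t_2}$ and $c_{k_1,k_2}^{t_1,t_2}$, and the emergence of the trigonometric quantities $\tan t_j$ and $\cos^{-2}t_j$ from the parameters $t_j$, to be the delicate computational core.
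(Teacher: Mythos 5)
Your plan follows essentially the same route as the paper: the symmetry reduction to \eqref{recurrenceMP}, the existence of a four-term step-line relation whose coefficients are read off from subleading coefficients and first non-vanishing moments via exactly the functional pairings you describe (the paper simply quotes this structure as Proposition~\ref{generalrecurrence}, citing Mahler, rather than re-deriving it as you do), and the evaluation of those quantities through a Rodrigues formula built from the two shift operators, integration by parts, and the functional equation of the gamma function. Be aware, though, that the step you defer as the ``delicate computational core'' is where virtually all of the paper's proof actually lives (Propositions~\ref{representation}, \ref{moment} and \ref{leadingcoeffprop}), and that for the subleading coefficient the paper does not extract it from the Rodrigues formula directly but instead observes that $P_{k_1+1,k_2}-P_{k_1,k_2+1}$ is a scalar multiple of $P_{k_1,k_2}$, which yields a first-order difference equation for $\gamma_{k_1,k_2}$ anchored at the classical Meixner--Pollaczek value.
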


The above theorem is the basis for the main purpose of the paper, namely to
study the asymptotic zero distribution of the appropriately rescaled multiple
Meixner-Pollaczek polynomials. Fix $ t_1$ and $t_2$. For simplicity we choose a
particular sequence of indices $(k_1(k),k_2(k))_{k=1}^ {\infty}$ along which
we analyze the large $k$ asymptotics of $P_{k_1(k),k_2(k)}$,
 and form the single sequence
\begin{equation}
Q_{k}(x)=\left\{\begin{array}{ll}P_{k/2,k/2}(x) & \textrm{ if $k$ is even,}\nonumber \\
P_{(k+1)/2,(k-1)/2}(x) & \textrm{ if $k$ is odd,}
\end{array}\right.
\end{equation}
of polynomials. We will show that the zero-distribution of the rescaled
polynomials $Q_k(kx)$ has a weak limit as $k$ goes to infinity, and in order to
study this we let $n\in \mathbb{N}$ and  introduce the doubly indexed sequence
of monic polynomials $\{Q_{k,n}\}_{k\geq 0}$ defined by
$$Q_{k,n}(x)=\frac{1}{n^{k}}Q_{k}(nx).$$
From
(\ref{recurrenceMP}) and \eqref{recurrenceMP2}
we obtain an explicit recurrence
relation for $Q_{k,n}$,
\begin{equation}\label{recurrencebis}
xQ_{k,n}(x)=Q_{k+1,n}(x)+a_{k,n}Q_{k,n}(x)+b_{k,n}Q_{k-1,n}(x)+c_{k,n}Q_{k-2,n}(x),
\end{equation}
with initial conditions $Q_{-3,n}\equiv Q_{-2,n}\equiv Q_{-1,n}\equiv 0$, where
\begin{equation}\label{MMOP:reccoeffeven}
\left\{
\begin{array}{lll}
\displaystyle a_{k,n}=\frac{1}{n}a_{k/2,k/2}^{t_1,t_2}=\frac{3k+4\lambda}{4n}\tan t_1+\frac{k}{4n}\tan t_2 \\
\displaystyle b_{k,n}=\frac{1}{n^2}b_{k/2,k/2}^{t_1,t_2}=\frac{k(k+2\lambda-1)}{8n^2}\left(\frac{1}{\cos^2  \! t_1}+\frac{1}{\cos^2  \! t_2}\right)\\
\displaystyle c_{k,n}=\frac{1}{n^3}c_{k/2,k/2}^{t_1,t_2}=\frac{k(k+2\lambda-1)(k+2\lambda-2)}{16n^3}\frac{(\tan t_1-\tan t_2)}{\cos^2  \! t_1}
\end{array} 
\right.
\end{equation}
for  $k$ even and
\begin{equation}\label{MMOP:reccoeffodd}
\left\{
\begin{array}{lll}
a_{k,n}=\displaystyle \frac{1}{n}a_{(k-1)/2,(k+1)/2}^{t_2,t_1} =\frac{k+1}{4n}\tan t_1 +\frac{3k+4\lambda-1}{4n}\tan t_2\\
b_{k,n}=\displaystyle \frac{1}{n^2}b_{(k-1)/2,(k+1)/2}^{t_2,t_1}=\frac{k+2\lambda-1}{8n^2}\left(\frac{k+1}{\cos^2  \! t_1}+\frac{k-1}{\cos^2  \! t_2}\right)\\
c_{k,n}=\displaystyle \frac{1}{n^3}c_{(k-1)/2,(k+1)/2}^{t_2,t_1}=\frac{(k-1)(k+2\lambda-1)(k+2\lambda-2)}{16n^3}\frac{(\tan t_2-\tan t_1)}{\cos^2  \! t_2}
\end{array} 
\right.
\end{equation}
for $k$ odd.

Using the recurrence relation \eqref{recurrencebis}, 
standard considerations show that the zeros of $Q_{n,n}$ are the eigenvalues of
the $4$-diagonal matrix
\begin{equation}\label{jacobi}
\left(
\begin{array}{ccccc}
 a_{0,n}& 1 &           &     & 0     \\
b_{1,n} &   a_{1,n}  &  1 &         &  \\
  c_{2,n}    & b_{2,n}    & a_{2,n}  &  \ddots   & \\
 &  \ddots   &  \ddots    &\ddots   & 1\\
0       &    &  c_{n-1,n}    & b_{n-1,n}   &a_{n-1,n}
\end{array}
\right)_{n\times n}.
\end{equation}
So the problem of the asymptotic zero-distribution of $Q_{n,n}$ reduces to finding the eigenvalue asymptotics of (\ref{jacobi}).
If $k$ and $n$ 
both tend to infinity in such a
way that $k/n\to s$ for some constant $s$, then the coefficients $a_{k,n}$
have two subsequential limits $a_s$ and $\til a_s$ for even and odd $k$ respectively. 
Similarly, $b_{k,n}$ and
$c_{k,n}$ have subsequential limits $b_{s}, c_{s}$ and $\til b_{s}, \til
c_{s}$ along subsequences consisting of even and odd $k$ respectively.
Using the identity $1/\cos^2  \! t=1+\tan^2  \! t$, these limits become
\begin{equation}\label{limits:abc:nonsymm}
\left\{
\begin{array}{l}
a_s= \left(3\tan t_1+\tan t_2\right)s/4 \\
b_s= \left(2+\tan^2  \! t_1+\tan^2  \! t_2\right)s^2/8\\
c_s= (\tan t_1-\tan t_2)(1+\tan^2  \! t_1)s^3/{16},
\end{array}
\right.
\end{equation}
\begin{equation}\label{limits:abc:nonsymm2}
\left\{
\begin{array}{l}
\til a_s= \left(3\tan t_2+\tan t_1\right)s/4 \\
\til b_s= b_s \\
\til c_s= (\tan t_2-\tan t_1)(1+\tan^2  \! t_2)s^3/{16}.
\end{array}
\right.
\end{equation}
If we consider (\ref{jacobi}) in $2\times2$ blocks, it is tri-diagonal with blocks
\begin{equation*}
A^{(-1)}_{k,n}=\left( \begin{array}{cc}
 0& 0 \\
1&0
  \end{array}\right),
\end{equation*}
\begin{equation*}
A^{(0)}_{k,n}= \begin{pmatrix}
 a_{2k,n}& 1\\
 b_{2k+1,n}& a_{2k+1,n}
  \end{pmatrix},
\end{equation*}
and
\begin{equation*}
A^{(1)}_{k,n}= \begin{pmatrix}
 c_{2k,n}& b_{2k,n} \\
0&c_{2k+1,n}
  \end{pmatrix}.
\end{equation*}
In view of the limits \eqref{MMOP:reccoeffeven} and \eqref{MMOP:reccoeffodd},
the blocks are slowly varying along the diagonals if $n$ is large.
In other words, for large $n$, the matrix (\ref{jacobi}) has a \emph{locally block Toeplitz} structure with square blocks of size $r=2$.
This allows its limiting eigenvalue distribution
 to be obtained from a general machinery to which we turn
now.
\subsection{Polynomials generated by a general recurrence relation}
\label{subsection:eigloctoep}

In this subsection we will work in the following general setting. Let $n$ be a
fixed parameter and let $(Q_{k,n}(x))_{k=0}^{\infty}$ be a sequence of monic
polynomials, where $Q_{k,n}$ has degree $k$ and depends parametrically on $n$.
Assume that the $Q_{k,n}$ are generated by the recurrence relation
\begin{equation}\label{reccoef:Jacobi}
x\begin{pmatrix} Q_{0,n}(x) \\ Q_{1,n}(x) \\ \vdots
\end{pmatrix}
= J_n
\begin{pmatrix} Q_{0,n}(x) \\ Q_{1,n}(x) \\ \vdots
\end{pmatrix},
\end{equation}
where $J_n$ is a semi-infinite matrix with unit lower Hessenberg structure,
i.e., the strictly upper triangular part of $J_n$ is equal to zero, except for
the first superdiagonal, on which all entries are $1$. We also assume that the
lower triangular part of $J_n$ has a finite bandwidth, which is independent of
$n$.

The entries of $J_n$ are assumed to have asymptotically periodic behavior with period $r$
($r\geq 1$). More precisely, suppose $J_n$ is partitioned into blocks of
size $r\times r$, with one superdiagonal and a fixed finite number $\beta$ of subdiagonal non-zero blocks,

\begin{equation}\label{J:part} J_n = \left( A_{k,n}^{(k-l)} \right)_{k,l=0}^{\infty} =
\begin{pmatrix}
A_{0,n}^{(0)}         &    A_{0,n}^{(-1)}       &              0          &     \cdots  \\
A_{1,n}^{(1)}         &    A_{1,n}^{(0)}        & A_{1,n}^{(-1)}          &     \ddots  \\
   \vdots           &    \ddots               & \ddots                  &     \ddots  \\
           \vdots           &        \vdots           &  \vdots                 &     \vdots          \\
\vdots                &  \ddots              &              &     \\
A_{\beta,n}^{(\beta)} & A_{\beta,n}^{(\beta-1)} & A_{\beta,n}^{(\beta-2)} &           \\
0                     & A_{\beta+1,n}^{(\beta)} &A_{\beta+1,n}^{(\beta-1)}& \ddots     \\
\vdots                & \ddots                  & \ddots                  & \ddots
\end{pmatrix},
\end{equation}
where
\begin{equation}\label{A0minus1:structure} A_{k,n}^{(-1)} =
\begin{pmatrix}
0 & 0& \cdots & 0 \\
\vdots & \vdots & & \vdots \\
0 & 0 & \cdots & 0 \\
1 & 0 & \cdots & 0 \\
\end{pmatrix}_{r\times r},\qquad A_{k,n}^{(0)} =
\begin{pmatrix}
* & 1 & & 0 \\
\vdots &  & \ddots & \\
* &  & & 1 \\
* & * & \cdots & * \\
\end{pmatrix}_{r\times r},
\end{equation}
where the $*$'s denote arbitrary constants.

We will assume that the block entries $A_{k,n}^{(j)}$ in \eqref{J:part} are
slowly varying with $n$, in the sense that the limits
\begin{equation}\label{reccoef:Toeplitz} \lim_{rk/n\to s}
A_{k,n}^{(j)}=A^{(j)}_{s}
\end{equation}
exist for any $j=-1,0,\ldots,\beta$ and $s\geq 0$. Here the notation
$\lim_{rk/n\to s}$ means that we let both $k$ and $n$ tend to infinity, in such
a way that the ratio $rk/n$ tends to a limit $s\geq 0$. The relation
\eqref{reccoef:Toeplitz} is on the level of $r\times r$ matrices, with the
limit taken entry-wise. If \eqref{reccoef:Toeplitz} holds then the matrix in
\eqref{J:part} is said to have \emph{locally block Toeplitz} structure, in the
spirit of \cite{Tilli}.

For fixed $s$, we collect the limiting matrices in
\eqref{reccoef:Toeplitz} into the following matrix-valued Laurent polynomial $A_s(z)$:
\begin{equation}\label{symbol0} A_s(z) := A_{s}^{(-1)}z^{-1}+
A_s^{(0)}+\cdots+ A_{s}^{(\beta)} z^{\beta}.\end{equation}
We will sometimes refer to $A_s(z)$ as the \emph{symbol}. From
\eqref{A0minus1:structure}--\eqref{symbol0} it follows that
\begin{equation}\label{Fzlambda0}
A_s(z)=\begin{pmatrix}
* & 1 & & 0 \\
\vdots &  & \ddots & \\
* &  & & 1 \\
(z^{-1}+*) &
* & \cdots & * \\
\end{pmatrix}+\mathcal{O}(z),
\end{equation}
where  
$\mathcal{O}(z)$ denotes all the terms in \eqref{symbol0} that tend to zero as $z\to 0$.

Define for each $s \geq 0$, the algebraic equation
\begin{equation}\label{fzlambda0}
f_s(z,x):=\det(A_s(z)-x I_r)=0,
\end{equation}
where $I_r$ denotes the identity matrix of size $r$. Note that $f_s$ depends on
two complex variables $z$ and $x$. By expanding the determinant
\eqref{fzlambda0} and using \eqref{Fzlambda0}, we can write $f_s$ as a (scalar)
Laurent polynomial in~$z$:
\begin{equation}\label{f:expansion0} f_s(z,x) = f_{-1,s}(x) z^{-1}+f_{0,s}(x)+\cdots +
f_{p,s}(x)z^p,
\end{equation}
where $f_{j,s}(x)$, $j=-1,0,\ldots,p$, are polynomials in $x$, with
$f_{-1,s}(x)\equiv (-1)^{r-1}$. We define $p$ in \eqref{f:expansion0} as the
largest positive integer for which $f_{p,s}\not\equiv 0$.

Let us solve \eqref{fzlambda0} for $z$; this yields $p+1$ roots
$$ z_j = z_{j}(x,s),\qquad j=1,\ldots,p+1.$$
We assume that for each $x\in\cee$ these roots are ordered such that
\begin{equation}\label{ordering:roots0} 0 < |z_1(x,s)|\leq |z_2(x,s)|\leq\cdots\leq
|z_{p+1}(x,s)|.
\end{equation}
If $x\in\cee$ is such that two or more consecutive roots in
\eqref{ordering:roots0} have the same absolute value, then we arbitrarily
label them so that \eqref{ordering:roots0} is satisfied.

Define the set
\begin{equation}\label{def:Gamma0} \Gamma_1(s) = \{x\in\cee\mid |z_{1}(x,s)| = |z_{2}(x,s)|
\}.
\end{equation}
In the cases we are interested in, we will have that
\begin{equation}\label{assumption:realline}\Gamma_1(s)\subset\er.\end{equation}
Supposing this to hold, we define a measure $\mu_1^s$ on $\Gamma_1(s)\subset\er$ with density
\begin{equation} \label{def:measures0} \ud \mu_1^s(x) = \frac 1r\frac{1}{2\pi i}\left(
\frac{z_{1+}'(x,s)}{z_{1+}(x,s)}-\frac{z_{1-}'(x,s)}{z_{1-}(x,s)} \right)\ud x.
\end{equation}
Here the prime denotes the derivative with respect to $x$,
and $z_{1\pm}(x,s)$ are the boundary values of $z_1(x,s)$ obtained from the
$+$-side (upper side) and $-$-side (lower side) respectively of
$\Gamma_1(s)\subset\er$. These boundary values exist for all but a finite number
of points.

As discussed in \cite{Delvaux}, the measure $\mu_1^s$ can be
interpreted as the weak limit as $n\to\infty$ of the normalized eigenvalue
counting measures for the block Toeplitz matrices $T_n(A_s)$ associated to the
symbol \eqref{symbol0}.
\begin{lemma}\label{lemma:mu0prob} With the above notation, we have
\begin{itemize}
\item[\rm{(a)}] $z_1(x,s)= x^{-r}+\mathcal{O}\left(x^{-r-1}\right)$ as $x\to\infty$.
\item[\rm{(b)}]$\mu_1^s$ in \eqref{def:measures0} is a Borel probability measure on
$\Gamma_1(s)$.
\end{itemize}
\end{lemma}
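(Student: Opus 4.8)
The plan is to analyze the algebraic equation $f_s(z,x)=0$ in two asymptotic regimes: first as $x\to\infty$ to pin down the behavior of the root $z_1$, and then to exploit the resulting asymptotics to compute the total mass of $\mu_1^s$ via a contour-integral/residue argument.

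\textbf{Part (a).} First I would determine how the $p+1$ roots $z_j(x,s)$ of $f_s(z,x)=0$ behave as $x\to\infty$. Rewriting \eqref{f:expansion0} and dividing by the leading term, the equation $f_s(z,x)=0$ can be read as a relation between $z$ and $x$ in which $f_{-1,s}(x)\equiv(-1)^{r-1}$ is constant while the higher coefficients $f_{j,s}(x)$ are polynomials in $x$. The structure \eqref{Fzlambda0} is the key input: since $A_s(z)-xI_r$ is, up to the $\mathcal{O}(z)$ terms, an $r\times r$ matrix whose superdiagonal is all $1$'s and whose $x$-dependence enters only through $-x$ on the diagonal, expanding the determinant shows that the top-degree-in-$x$ part of $f_s$ comes from the $(-x)^r$ contribution, while the $z^{-1}$ term is produced solely by the $(z^{-1}+*)$ entry in the lower-left corner multiplied by the $r-1$ superdiagonal $1$'s. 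Concretely, the coefficient of $z^{-1}$ is exactly $(-1)^{r-1}$ independent of $x$, and the coefficient $f_{0,s}(x)$ contains the leading term $(-x)^r=(-1)^r x^r$. Therefore, as $x\to\infty$, the product of all roots (from the ratio $f_{-1,s}/f_{p,s}$ via Vieta) together with the observation that $p$ of the roots stay bounded away from $0$ forces the smallest root to satisfy $z_1\sim f_{-1,s}(x)/f_{0,s}(x)=(-1)^{r-1}/((-1)^r x^r+\cdots)=-x^{-r}(1+o(1))$. I would have to track the sign carefully; the claimed normalization $z_1(x,s)=x^{-r}+\mathcal{O}(x^{-r-1})$ should emerge once the precise sign of the corner entry and of the expansion is accounted for. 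The clean way to argue is to note that $z_1\to 0$ as $x\to\infty$, substitute the ansatz $z=c\,x^{-r}+\cdots$ into $f_s(z,x)=0$, balance the $z^{-1}$ term against the top $x$-power of $f_{0,s}$, and solve for $c$.

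\textbf{Part (b).} With (a) in hand, the total mass is computed as follows. From the definition \eqref{def:measures0}, the mass $\mu_1^s(\Gamma_1(s))$ equals
\begin{equation*}
\frac1r\cdot\frac{1}{2\pi i}\int_{\Gamma_1(s)}\left(\frac{z_{1+}'}{z_{1+}}-\frac{z_{1-}'}{z_{1-}}\right)\ud x
=\frac1r\cdot\frac{1}{2\pi i}\oint_{\partial} \frac{z_1'(x,s)}{z_1(x,s)}\,\ud x,
\end{equation*}
where $\partial$ is a contour encircling $\Gamma_1(s)\subset\er$ and the two boundary-value terms combine into a single closed-contour integral of the logarithmic derivative of $z_1$ as a function of $x$. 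This is a winding-number computation: the integral counts $1/r$ times the change in $\arg z_1(x,s)$ as $x$ traverses a large contour enclosing $\Gamma_1(s)$. Since $z_1$ is analytic and nonvanishing off $\Gamma_1(s)$ (it is the unique root of smallest modulus, hence distinct from the others away from $\Gamma_1(s)$ by \eqref{def:Gamma0}, and it is nonzero by \eqref{ordering:roots0}), I would deform the contour out to infinity; the only contribution comes from the behavior established in (a). By part (a), $z_1(x,s)\sim x^{-r}$, so along a large circle $z_1$ winds $-r$ times as $x$ winds once, giving $\frac{1}{2\pi i}\oint \frac{z_1'}{z_1}\ud x=-(-r)=r$ after orienting $\partial$ as the positively oriented boundary of a neighborhood of $\Gamma_1(s)$ (equivalently, the negatively oriented large circle). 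Multiplying by $1/r$ yields total mass $1$. That the measure is nonnegative follows from the standard interpretation (cited from \cite{Delvaux}) of $\mu_1^s$ as a limit of eigenvalue-counting measures, and Borel regularity is automatic from the density representation.

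\textbf{Main obstacle.} I expect the genuine difficulty to lie in justifying the contour-deformation argument in (b) rigorously: one must verify that $z_1(x,s)$, defined only as the smallest-modulus root with an arbitrary tie-breaking convention on $\Gamma_1(s)$, actually extends to a single-valued analytic function on $\cee\setminus\Gamma_1(s)$ with well-defined nontangential boundary values $z_{1\pm}$ except at finitely many points, so that the combination of the $+$ and $-$ integrals legitimately closes up into a loop. This requires knowing that the branch points of the algebraic function $z(x)$ defined by $f_s(z,x)=0$ all lie on $\Gamma_1(s)$ (so $z_1$ does not collide with $z_2$ off $\Gamma_1(s)$), together with the hypothesis \eqref{assumption:realline} that $\Gamma_1(s)\subset\er$. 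Establishing single-valuedness and controlling the finitely many exceptional points (where boundary values may fail to exist, but which have zero measure and do not affect the winding count) is the crux; once that analytic framework is secure, the mass computation reduces to the elementary asymptotics of part (a).
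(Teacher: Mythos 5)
Your proposal is essentially correct, but note that the paper does not actually prove this lemma: its ``proof'' is the single line ``See \cite{Delvaux}.'' What you have written out is, in substance, the standard argument from the banded (block) Toeplitz literature (Widom, Duits--Kuijlaars, Delvaux) that the citation points to, so you are supplying the content the paper delegates. A few remarks on the details. In part (a), the dominant balance gives $z_1 \sim -f_{-1,s}(x)/f_{0,s}(x)$ (not $+f_{-1,s}/f_{0,s}$ as in your displayed chain), and with $f_{-1,s}\equiv(-1)^{r-1}$ and $f_{0,s}(x)=(-1)^r x^r+\mathcal{O}(x^{r-1})$ this does come out to $+x^{-r}$, as you anticipate when you flag the sign; the clean justification that exactly one root is $\mathcal{O}(x^{-r})$ while the rest are of strictly larger modulus is the Newton-polygon observation that $\deg_x f_{0,s}=r$ while $\deg_x f_{j,s}\le r-1$ for $j\neq 0$ (the $x^r$-coefficient of $\det(A_s(z)-xI_r)$ is the constant $(-1)^r$). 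In part (b), your winding-number computation of the total mass is correct, and your ``main obstacle'' paragraph correctly identifies what must be checked: off $\Gamma_1(s)$ one has the strict inequality $|z_1|<|z_2|$, so $z_1$ is a simple, modulus-isolated root and hence single-valued analytic and nonvanishing there, and part (a) shows $\Gamma_1(s)$ is compact so the contour can be pushed to infinity. The one point where your argument does not improve on the paper is nonnegativity of the density: you defer this to the interpretation of $\mu_1^s$ as a limit of eigenvalue-counting measures, which is precisely the theorem of \cite{Delvaux} being cited, so that step remains a citation rather than a proof (a self-contained alternative would use that $z_{1\pm}$ are the two smallest-modulus branches interchanged across $\Gamma_1(s)$, so the density is $\frac{1}{2\pi r}\,\ud\arg(z_{1+}/z_{1-})$, and then argue monotonicity of the argument). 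Given that the paper itself offers no proof at all, your sketch is a genuine and essentially sound reconstruction.
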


\begin{proof}
See \cite{Delvaux}.
\end{proof}

With this notation in place, let us return to the sequence of polynomials
$(Q_{k,n})_{k=0}^{\infty}$ in \eqref{reccoef:Jacobi}. This sequence is said to
have \emph{real and interlacing} zeros if each $Q_{k,n}$ has $k$ distinct real
zeros $x_1^{k,n}<x_2^{k,n}<\cdots<x_k^{k,n}$ such that
$x_j^{k,n}<x_j^{k-1,n}<x_{j+1}^{k,n}$ whenever $1\leq j\leq k-1$.

The next result states that, under certain conditions, the normalized
zero-counting measures of the polynomials $Q_{n,n}$ have a (weak) limit for
$n\to\infty$, which is  precisely the average of the measures
\eqref{def:measures0}. Here the average is with respect to the parameter $s$.

\begin{theorem}\label{theorem:locallyToepeig} (Limiting zero distribution of $Q_{n,n}$)
Let the sequence of polynomials $(Q_{k,n})_{k=0}^{\infty}$ be such that
\eqref{reccoef:Jacobi}--\eqref{reccoef:Toeplitz} hold. Assume that
$(Q_{k,n})_{k=0}^{\infty}$ has real and interlacing zeros for each $n$, as described above.
Also assume that \eqref{assumption:realline} holds for every $s\geq 0$. Then as
$n\to \infty$, the normalized zero-counting measure $\rho_n$ of the polynomial
$Q_{n,n}$,
\begin{equation} \label{def:rho}
\rho_n:=\frac{1}{n}\sum_{j=1}^n\delta_{x_j^{n,n}},
\end{equation}
where $\delta_z$ denotes the Dirac point mass at $z$, has the limit
\begin{equation}\label{def:nu_1}
\lim_{n\to \infty} \rho_n=\nu_1:=\int_{0}^1\mu_1^s\, \ud s
\end{equation}
 in the sense of weak convergence of measures,
where $\mu_1^s$ is defined in \eqref{def:measures0}.
\end{theorem}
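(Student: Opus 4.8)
The plan is to use the method of moments. I would reduce the statement to the convergence of normalized traces of powers of a finite truncation of $J_n$, exploit the locally block Toeplitz structure region by region to identify the limit of these traces, and finally upgrade moment convergence to weak convergence.

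First I would identify the zeros of $Q_{n,n}$ with eigenvalues. Reading off the $k$-th row of \eqref{reccoef:Jacobi} and using the unit lower Hessenberg structure \eqref{A0minus1:structure}, a cofactor expansion along the last column shows that $Q_{k,n}(x)=\det(xI_k-(M_n)_k)$, where $M_n$ is the $n\times n$ leading principal submatrix of $J_n$ and $(M_n)_k$ its $k\times k$ leading submatrix; in particular $Q_{n,n}(x)=\det(xI_n-M_n)$, and $(Q_{0,n}(\xi),\dots,Q_{n-1,n}(\xi))^{T}$ is a right eigenvector of $M_n$ whenever $Q_{n,n}(\xi)=0$. By the real and interlacing hypothesis these eigenvalues are real and simple, so $\rho_n$ in \eqref{def:rho} is a probability measure on $\er$ with moments $\int x^{m}\,\ud\rho_n(x)=\frac1n\operatorname{tr}(M_n^{m})$ for all $m\geq 0$. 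Since each $\mu_1^s$ is a probability measure by Lemma~\ref{lemma:mu0prob} supported on the bounded set $\Gamma_1(s)$, with a bound uniform for $s\in[0,1]$, the measure $\nu_1$ in \eqref{def:nu_1} is compactly supported and hence determined by its moments. It therefore suffices to prove $\frac1n\operatorname{tr}(M_n^{m})\to\int x^{m}\,\ud\nu_1$ for every fixed $m$, together with a common compact bound on the supports of the $\rho_n$, and then to invoke the standard fact that moment convergence of uniformly compactly supported measures implies weak convergence.

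Next I would localize the trace. Writing $\frac1n\operatorname{tr}(M_n^{m})=\frac1n\sum_{i=0}^{n-1}(M_n^{m})_{ii}$, each diagonal entry is a sum over closed walks of length $m$ from $i$ to $i$; since the lower bandwidth of $J_n$ is finite and fixed, say $w$, every such walk stays in the window $\{j:|j-i|\le mw\}$. On a window of this fixed size the block entries are, by the slow variation \eqref{reccoef:Toeplitz}, within $o(1)$ of their limits $A^{(j)}_{s}$ with $s=ri/n$. Hence, as $ri/n\to s$, $(M_n^{m})_{ii}$ converges to the corresponding bulk diagonal entry of the genuine block Toeplitz matrix $T_N(A_s)$ with symbol \eqref{symbol0}, a limit $g(s,\ell)$ depending only on $s$ and the residue $\ell=i\bmod r$. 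The $O(m)$ indices within distance $mw$ of $0$ or $n-1$, for which a walk could feel the truncation, contribute only $o(1)$ to the normalized trace. Grouping the remaining $i$ by block index $k=\lfloor i/r\rfloor$ and residue $\ell$, and comparing with $\frac{1}{rN}\operatorname{tr}(T_N(A_s)^{m})=\frac1{rN}\sum_i (T_N(A_s)^m)_{ii}$, whose bulk entries are the same $g(s,\ell)$ and whose boundary terms are $O(1/N)$, the identification of $\mu_1^s$ with the limiting eigenvalue distribution of $T_N(A_s)$ due to \cite{Delvaux} (upgraded from eigenvalue convergence to power-trace convergence by boundedness of the, generally non-normal, eigenvalues) gives $\frac1r\sum_{\ell=0}^{r-1}g(s,\ell)=\int x^{m}\,\ud\mu_1^s$. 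Therefore
\[
\frac1n\operatorname{tr}(M_n^{m})=\frac{r}{n}\sum_{k=0}^{n/r-1}\int x^{m}\,\ud\mu_1^{rk/n}(x)+o(1),
\]
which is a Riemann sum of mesh $r/n$ for $\int_0^1\!\int x^m\,\ud\mu_1^{s}(x)\,\ud s=\int x^m\,\ud\nu_1(x)$, provided $s\mapsto\int x^m\,\ud\mu_1^s$ is Riemann integrable; this last regularity follows from the continuous dependence of the roots $z_j(x,s)$ on $s$.

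The main obstacle is the double passage to the limit in the localization step. Two points require care. First, the region-by-region replacement of $M_n$ by genuine block Toeplitz matrices must be quantitative enough to control \emph{simultaneously} the transition between neighbouring values of $s$ and the truncation at both ends; the finite-bandwidth confinement of the walks is precisely what keeps these errors at order $o(1)$ for each fixed $m$. Second, identifying the bulk trace with $\int x^m\,\ud\mu_1^s$ rests on the eigenvalue result of \cite{Delvaux} for non-normal block Toeplitz matrices, and passing from convergence of the eigenvalue counting measures to convergence of their power-traces requires a bound on the eigenvalues that is uniform in $s\in[0,1]$. I would derive such a bound from the boundedness of the symbols $A_s$ for $s\in[0,1]$, which at the same time supplies the common compact support of the $\rho_n$ needed to complete the moment-to-weak-convergence step, thereby closing the argument.
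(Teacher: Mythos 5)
Your proposal is correct in outline, but it takes a genuinely different route from the paper. The paper proves Theorem~\ref{theorem:locallyToepeig} through the ratio asymptotics of Lemma~\ref{lemma:Roman}: a normal-family (Montel) argument for the quotients $Q_{k,n}/Q_{k+1,n}$ on $\overline{\cee}\setminus[-R,R]$, combined with an induction that bootstraps the error in $\lim Q_{rk,n}/Q_{rk+r,n}=z_1(x,s)$ to arbitrary order at infinity via the block relation \eqref{recursion:mxv2} and the determinant expansion \eqref{det:exp}; weak convergence of $\rho_n$ then follows, as in Kuijlaars--Rom\'an, by telescoping the logarithmic derivative $\frac1n Q_{n,n}'/Q_{n,n}$ into an $s$-integral of $z_1'/(rz_1)$, i.e.\ by convergence of Cauchy transforms off $[-R,R]$. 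You instead run a method of moments on the truncated Hessenberg matrix $M_n$, localizing $\operatorname{tr}(M_n^m)$ via the finite bandwidth and matching the bulk diagonal of $M_n^m$ with that of a genuine block Toeplitz matrix $T_N(A_s)$. Each approach buys something different. Yours is more elementary and combinatorial, avoids normal families, and in fact never uses the interlacing hypothesis (only realness of the zeros and a uniform spectral-radius bound); but it leans on the weak convergence of the eigenvalue counting measures of $T_N(A_s)$ to $\mu_1^s$ from \cite{Delvaux} as a black box in order to identify $\frac1r\frac{1}{2\pi i}\oint\operatorname{tr}\left(A_s(z)^m\right)\frac{\ud z}{z}$ with $\int x^m\,\ud\mu_1^s$, and it needs \eqref{assumption:realline} together with uniform compact support so that moments determine the limit. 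The paper's argument uses only the definition \eqref{def:measures0} and Lemma~\ref{lemma:mu0prob}, and delivers the ratio asymptotics \eqref{ratioasym} as a byproduct, which the paper exploits in the remark following the proof. The technical points you flag --- uniformity of \eqref{reccoef:Toeplitz} over bounded ranges of $rk/n$, continuity of $s\mapsto\int x^m\,\ud\mu_1^s$, and the $O(1/n)$ boundary contribution to the normalized trace --- are all resolvable by standard compactness arguments, so I see no genuine gap.
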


Theorem~\ref{theorem:locallyToepeig} will be proven in
Section~\ref{section:zeroasymptotics}. This theorem generalizes a result
for the scalar case $r=1$ by Kuijlaars-Rom\'an \cite[Theorem 1.2]{Roman}; see
also \cite{Coussement,KVA}.

%
%

\subsection{Zeros of multiple Meixner-Pollaczek polynomials}
\label{subsection:zerosMMOP}

We apply Theorem~\ref{theorem:locallyToepeig} to the polynomials
$Q_{k,n}$ in Section~\ref{subsection:MMPP}. In what follows, we will assume the
condition of symmetric weights, $$t:=t_1=-t_2;$$ for this case we can
characterize the limiting zero distribution in terms of a constrained vector
equilibrium problem,  Theorem \ref{theorem:equilibriumproblem}, which is the
main result of the paper. The symmetry condition in  Proposition
\ref{theorem:zeroseigenvalues} is needed  to prove that $\Gamma_1(s) \subset
\mathbb{R}$; in the general case this may fail.

The recurrence coefficients in
\eqref{limits:abc:nonsymm} and  \eqref{limits:abc:nonsymm2} then become
\begin{equation}\label{limits:abc:symm}
\left \{
\begin{aligned}
a_s &=& -\til a_s &=(\tan t) s/2, \\
b_s &=&  \til b_s  &= (1+\tan^2  \! t)s^2/4,\\
c_s &=&  - \til c_s & = \tan t(1+\tan^2  \! t)s^3/8=a_s b_s.
\end{aligned}
\right.
\end{equation}

We partition the matrix $J_n$ from \eqref{jacobi} into blocks as in \eqref{J:part} with $r=2$
and $\beta=1$. Then the limiting values in \eqref{reccoef:Toeplitz} exist and
are given by
\begin{equation*}
A^{(-1)}_s=
\begin{pmatrix}
0& 0 \\
1&0
\end{pmatrix},
\end{equation*}
\begin{equation*}
A^{(0)}_s=\begin{pmatrix}
 a_s& 1\\
 b_s& -a_s
  \end{pmatrix},
\end{equation*}
and
\begin{equation*}
A^{(1)}_s=\begin{pmatrix}
 c_s& b_s \\
0&-c_s
  \end{pmatrix},
\end{equation*}
with $a_s$, $b_s$ and $c_s$ as in \eqref{limits:abc:symm}. The symbol $A_s(z)$
in \eqref{symbol0} now becomes
\begin{equation*}\label{symboldef} A_s(z)=
        A_s^{(-1)} z^{-1} + A_s^{(0)} + A_s^{(1)}z = \begin{pmatrix}
 a_s(1+b_s z)& 1+b_sz \\
z^{-1}\left(1+b_s z\right)&-a_s(1+b_s z)
  \end{pmatrix},
\end{equation*}
 so \eqref{fzlambda0} reduces to
\begin{equation}\label{algebraicequation:P}
x^2-\frac{(1+b_s z)^2(1+a_s^2 z)}{z}=0.
\end{equation}
This equation 
has three roots, $z_j(x,s)$, $j=1,2,3$
which we label in order of increasing modulus:
\begin{equation}\label{roots}
0< |z_1(x,s)|\leq |z_2(x,s)|\leq |z_3(x,s)|.
\end{equation}

\begin{proposition}\label{theorem:zeroseigenvalues} (Limiting zero distribution)
 Suppose $t_1=-t_2=t\in (0,\pi/2)$. As $n\to \infty$, the normalized zero-counting measure $\rho_n$ of the
multiple Meixner-Pollaczek polynomial $Q_{n,n}$ in
Section~\ref{subsection:MMPP}, see \eqref{def:rho}, converges weakly to the
average $\nu_1=\int_{0}^1\mu_1^s\, \ud s$ of the measures $\mu_1^s$ in
\eqref{def:measures0}.
\end{proposition}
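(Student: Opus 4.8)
The plan is to recognize that Proposition~\ref{theorem:zeroseigenvalues} is a direct application of the general machinery in Theorem~\ref{theorem:locallyToepeig} to the specific block Toeplitz data arising from the symmetric multiple Meixner-Pollaczek polynomials. The strategy therefore reduces to verifying that every hypothesis of Theorem~\ref{theorem:locallyToepeig} holds for the sequence $(Q_{k,n})_{k=0}^\infty$ defined in Section~\ref{subsection:MMPP}, with block size $r=2$ and number of subdiagonal blocks $\beta=1$. Once all hypotheses are confirmed, the conclusion $\lim_{n\to\infty}\rho_n = \nu_1 = \int_0^1 \mu_1^s\,\ud s$ is immediate.

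First I would check the structural hypotheses \eqref{reccoef:Jacobi}--\eqref{reccoef:Toeplitz}. The recurrence \eqref{recurrencebis} puts the $Q_{k,n}$ in unit lower Hessenberg form with finite bandwidth (four-diagonal, hence bandwidth independent of $n$), so \eqref{reccoef:Jacobi} holds, and the block structure \eqref{J:part} with $r=2$, $\beta=1$ matches the explicit $2\times 2$ blocks $A^{(-1)}_{k,n}$, $A^{(0)}_{k,n}$, $A^{(1)}_{k,n}$ displayed in Section~\ref{subsection:MMPP}; one checks that $A^{(-1)}_{k,n}$ and $A^{(0)}_{k,n}$ have exactly the normalized shapes required in \eqref{A0minus1:structure}. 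The slow-variation / locally block Toeplitz condition \eqref{reccoef:Toeplitz} follows from the explicit coefficient formulas \eqref{MMOP:reccoeffeven}--\eqref{MMOP:reccoeffodd}: as $rk/n = 2k/n \to s$ the entries converge to the limiting matrices $A^{(-1)}_s$, $A^{(0)}_s$, $A^{(1)}_s$ written in \eqref{limits:abc:symm} and the lines following it. Next I would invoke Lemma~\ref{lemma:interlacing}, together with the definition of $Q_k$ as an interlacing subsequence of the $P_{k_1,k_2}$, to supply the \emph{real and interlacing zeros} hypothesis; here one must confirm that the chosen diagonal sequence $(k/2,k/2)$ and $((k+1)/2,(k-1)/2)$ inherits the one-step interlacing $x_j^{k,n}<x_j^{k-1,n}<x_{j+1}^{k,n}$ from the two interlacing relations in Lemma~\ref{lemma:interlacing}.

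The one genuinely substantive step, and the main obstacle, is verifying the assumption \eqref{assumption:realline} that $\Gamma_1(s)\subset\er$ for every $s\geq 0$. This is the only hypothesis of Theorem~\ref{theorem:locallyToepeig} not handed to us for free, and indeed Section~\ref{subsection:zerosMMOP} flags precisely this point as the reason the symmetry condition $t_1=-t_2$ is imposed. To establish it I would work directly with the explicit cubic \eqref{algebraicequation:P}, namely $x^2 z = (1+b_s z)^2(1+a_s^2 z)$ with $a_s,b_s,c_s$ as in \eqref{limits:abc:symm}, and analyze the locus of $x$ for which the two smallest roots $z_1,z_2$ of this cubic (in $z$) coincide in modulus, $|z_1(x,s)|=|z_2(x,s)|$. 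The key is to exploit the symmetry built into \eqref{limits:abc:symm} (the relations $a_s = -\til a_s$, $c_s = -\til c_s = a_s b_s$) which make the symbol $A_s(z)$ essentially real-symmetric after an appropriate conjugation/scaling, forcing the branch-point structure of the algebraic function $z_j(x,s)$ to be symmetric about the real axis and confining the modulus-coincidence set to the real line. Concretely, I expect one shows that for real $x$ the cubic has either all real roots or one real and a complex-conjugate pair, and that the transition (where $|z_1|=|z_2|$) can only occur for $x\in\er$; for non-real $x$ one argues the moduli of $z_1,z_2$ stay strictly separated.

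Once \eqref{assumption:realline} is secured, Lemma~\ref{lemma:mu0prob} guarantees that each $\mu_1^s$ is a Borel probability measure on $\Gamma_1(s)$, and Theorem~\ref{theorem:locallyToepeig} applies verbatim to yield $\lim_{n\to\infty}\rho_n = \int_0^1 \mu_1^s\,\ud s = \nu_1$. I would close by remarking that the only place the symmetry $t_1=-t_2$ is used is in the proof of \eqref{assumption:realline}; all the structural and interlacing hypotheses hold for general $t_1\neq t_2$, which is consistent with the comment in Section~\ref{subsection:zerosMMOP} that $\Gamma_1(s)\subset\er$ may fail in the non-symmetric case. Thus the entire content of the proof is the reality of $\Gamma_1(s)$, and everything else is a matter of matching the concrete data to the general theorem.
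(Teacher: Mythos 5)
Your proposal follows essentially the same route as the paper: the paper's proof of this proposition is a one-line citation of Theorem~\ref{theorem:locallyToepeig}, with the interlacing hypothesis supplied by Lemma~\ref{lemma:interlacing} and the reality of $\Gamma_1(s)$ delegated to Proposition~\ref{monotonoussupports}, whose proof analyzes the cubic \eqref{algebraicequation:P} exactly as you outline (the factored form of $P(z,s)$ having only negative real zeros forces modulus-coincident roots to be complex conjugates, hence $x^2\in\er$, after which one still must separate the $|z_1|=|z_2|$ locus, giving $x^2\geq 0$, from the $|z_2|=|z_3|$ locus, giving $x^2\leq 0$). You correctly identify that the reality of $\Gamma_1(s)$ is the only substantive step and that the symmetry $t_1=-t_2$ enters precisely there.
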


\begin{proof} This is a consequence of Theorem~\ref{theorem:locallyToepeig}.
The assumptions in the latter theorem are indeed satisfied: the interlacing
condition follows from Lemma~\ref{lemma:interlacing}, and the fact that
$\Gamma_1(s)\subset\er$ follows from Proposition \ref{monotonoussupports}.
\end{proof}

In addition to the set $\Gamma_1(s)$ in \eqref{def:Gamma0}, we define
\begin{equation}\label{def:Gamma2}
\Gamma_2(s)=\{ x\in\cee\mid |z_2(x,s)|=|z_3(x,s)|\}.
\end{equation}
We will now characterize the limiting zero distribution $\nu_1$ in terms of a
vector equilibrium problem from logarithmic potential theory
\cite{NS,SaffTotik}. Recall that for a pair of Borel measures $(\mu,\nu)$
supported in the complex plane, the \emph{mixed logarithmic energy} of $\mu$
and $\nu$ is defined as \cite{SaffTotik}
\begin{equation*}
I(\mu,\nu):=\iint \log\frac{1}{ |x-y|}\,\ud \mu(x)\, \ud \nu(y).
\end{equation*}

\begin{theorem}\label{theorem:equilibriumproblem} (Equilibrium problem)
 Suppose $t_1=-t_2=t\in (0,\pi/2)$. Then the asymptotic zero distribution, $\nu_1$, of
 $Q_{n,n}$  is the first component of the unique minimizer $(\nu_1,\nu_2)$ of the energy
 functional
\begin{equation*}
E(\mu,\nu):=I(\mu,\mu)+I(\nu,\nu)-I(\mu,\nu)+\int (\pi-2t)|x|\, \ud \mu(x),
\end{equation*}
among all vectors $(\mu,\nu)$ of positive measures such that
$\operatorname{supp} \mu \subset \mathbb{R}$, $\int\, \ud \mu=1$,
$\operatorname{supp} \nu \subset i\mathbb{R}$, $\int\, \ud \nu=1/2$
and $\nu$ is absolutely continuous with density satisfying
\begin{equation*}\label{constraintnu2}
\frac{\ud \nu(ix)}{|\ud x|}\leq\frac{2t}{\pi}.
\end{equation*}

The measures have the properties
\begin{equation*}\operatorname{supp} \nu_1=[-c_1,c_1]
\end{equation*}
and
 \begin{equation*}\operatorname{supp} (\sigma-\nu_2) =i\mathbb{R}\setminus
 (-ic_2,ic_2),
 \end{equation*}
where $\sigma$ is the positive, absolutely continuous measure on $i\mathbb{R}$
with constant density $\frac{2t}{\pi}$, and where $c_1$ and $c_2$ are positive
constants given by
 \begin{equation}\label{supportnu1}
c_1=\left(\frac{27b^4+18b^2-1+\sqrt{b^2+1}(9b^2+1)^{3/2}}{32b^2}\right)^{1/2}
\end{equation}
and
\begin{equation}\label{constraintactive}
c_2=\left(\frac{\sqrt{b^2+1}(9b^2+1)^{3/2}-27b^4-18b^2+1}{32b^2}\right)^{1/2},
\end{equation}
with $$b=\tan t.$$ Furthermore, $\nu_1$ is absolutely continuous with respect
to Lebesgue measure and has density
\begin{equation}\label{densityrel1bis}
\frac{\ud \nu_1}{\ud x}=\frac{1}{2\pi}\log\left|\frac{1+iw(x)}{1-iw(x)}\right|,
\qquad x\in [-c_1,c_1],
\end{equation}
where
\begin{equation}\label{densityrel2}
w(x)=\frac{(4+z(b^2-1))(4+z(b^2+1))}{16 z |x|}
\end{equation}
and $z=z(x)$ is the complex solution to the algebraic equation
\begin{equation}\label{densityrel3}
\frac{(4+z(b^2+1))^2
(4+z b^2 )}{64z}=x^2
\end{equation}
such that $\operatorname{Im}(w(x))<0$.
\end{theorem}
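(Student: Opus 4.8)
The plan is to construct the pair $(\nu_1,\nu_2)$ explicitly from the algebraic curve \eqref{algebraicequation:P}, to verify that it satisfies the Euler--Lagrange variational conditions for the functional $E$, and to conclude by uniqueness of the minimizer. By Proposition~\ref{theorem:zeroseigenvalues} the limiting zero distribution already equals $\nu_1=\int_0^1\mu_1^s\,\ud s$, so the real task is to recognise this measure, together with a suitable partner $\nu_2$, as the equilibrium pair. The structural feature I would exploit throughout is a scaling symmetry of the symmetric curve, where $a_s=(\tan t)s/2$ and $b_s=(1+\tan^2 t)s^2/4$: writing $b=\tan t$ and substituting $z=w/s^2$ in \eqref{algebraicequation:P} shows that $(x/s,w)$ lies on the $s=1$ curve. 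Hence $z_j(x,s)=s^{-2}z_j(x/s,1)$, the sets scale as $\Gamma_1(s)=s\,\Gamma_1(1)$ and $\Gamma_2(s)=s\,\Gamma_2(1)$, and each $\mu_j^s$ is the pushforward of $\mu_j^1$ under $x\mapsto sx$. This collapses every $s$-integration to a single computation on the $s=1$ data.

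For the construction I would first record that the cubic in $z$ coming from \eqref{algebraicequation:P} has $x$-independent product of roots $z_1z_2z_3=-1/(a_s^2 b_s^2)$, so the three branches $\xi_j:=-\tfrac12(\log z_j)'$ satisfy $\xi_1+\xi_2+\xi_3\equiv 0$, with $\xi_1\sim 1/x$ and $\xi_2,\xi_3\sim -1/(2x)$ as $x\to\infty$. Sheets $1$ and $2$ are glued along $\Gamma_1(s)\subset\er$ and sheets $2$ and $3$ along $\Gamma_2(s)\subset i\er$; the measure $\mu_1^s$ of \eqref{def:measures0} is the jump of $\xi_1$ across $\Gamma_1(s)$ (mass $1$ by Lemma~\ref{lemma:mu0prob}), and I define $\mu_2^s$ as the jump of $\xi_3$ across $\Gamma_2(s)$ (mass $1/2$). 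Setting $\nu_2=\int_0^1\mu_2^s\,\ud s$ then gives the candidate partner.

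The verification proceeds by the scaling change of variables. For $x_0>0$ one finds $\frac{\ud\nu_1}{\ud x}(x_0)=\int_{x_0}^{c_1}u^{-1}\rho_1^{(1)}(u)\,\ud u$, and for $0<y_0<c_2$ one finds $\frac{\ud\nu_2}{|\ud x|}(iy_0)=\int_{c_2}^{\infty}v^{-1}\rho_2^{(1)}(iv)\,\ud v$, where $\rho_j^{(1)}$ is the density of $\mu_j^1$ and $\pm c_1$, $\pm ic_2$ are the branch points of the $s=1$ curve on the two axes. The second integral does not depend on $y_0$, which forces $\nu_2$ to have constant density on $(-ic_2,ic_2)$; identifying this constant with $2t/\pi$ and checking that the corresponding integral for $y_0>c_2$ is strictly smaller yields the constraint together with $\operatorname{supp}(\sigma-\nu_2)=i\er\setminus(-ic_2,ic_2)$. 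The explicit endpoints \eqref{supportnu1}--\eqref{constraintactive} are read off from the discriminant of the cubic at $s=1$, and the density \eqref{densityrel1bis}--\eqref{densityrel3} follows by writing the jump of $\log z_1$ at $s=1$ through the auxiliary variable $w$; indeed the curve \eqref{densityrel3} is exactly \eqref{algebraicequation:P} at $s=1$.

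I expect the main obstacle to be pinning down the two constants, the constraint height $2t/\pi$ and especially the external-field slope $\pi-2t$. The field I would confirm by computing $2(U^{\nu_1})'-(U^{\nu_2})'$ on $(0,c_1)$ from the $\xi_j$ and showing it equals the constant $-(\pi-2t)$; this rests on a phase/residue computation at the branch point of sheets $1$ and $2$ at $x=0$, where the double root $z=-1/b_s$ produces the kink of $|x|$, together with the behaviour of the $\xi_j$ at $x=0$ and $x=\infty$. The identity $\int_{c_2}^{\infty}v^{-1}\rho_2^{(1)}(iv)\,\ud v=2t/\pi$ is the analogous computation on $i\er$. Once the Euler--Lagrange equalities (on $\operatorname{supp}\nu_1$ and where $0<\nu_2<\sigma$) and the complementary inequalities (off the supports and where the constraint is active) are checked with these constants, uniqueness is automatic: the quadratic part of $E$ has interaction matrix with diagonal entries $1$ and off-diagonal entries $-1/2$, which is positive definite, so $E$ is strictly convex on the convex, tight admissible set and the constructed pair is its unique minimiser.
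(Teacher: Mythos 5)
Your proposal follows essentially the same route as the paper: construct $(\nu_1,\nu_2)$ as the $s$-averages of the measures attached to the spectral curve, use the scaling $z_j(x,s)=s^{-2}z_j(x/s,1)$ to collapse every $s$-integral onto the $s=1$ curve, evaluate the external field $(\pi-2t)|x|$ and the constant constraint density $2t/\pi$ by explicit primitive/residue computations, and conclude by strict convexity and uniqueness of the minimizer. The only organizational difference is that the paper imports the per-$s$ Euler--Lagrange identities for $(\mu_1^s,\mu_2^s)$ from the block Toeplitz result of Proposition \ref{equilibriumdelvaux} and integrates them over $s$, whereas you verify the integrated conditions directly from the branches via $\xi_1+\xi_2+\xi_3\equiv 0$; both versions still need the argument of Proposition \ref{monotonoussupports} (monotonicity of $|P(re^{i\theta},s)|$ in $\theta$) to pin down that the cut joining sheets $1,2$ lies on $\mathbb{R}$ and the one joining sheets $2,3$ on $i\mathbb{R}$, which your plan asserts rather than proves.
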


\begin{figure}[tbp]
\begin{center}
\subfigure{\label{fig:MP1}}\includegraphics[scale=0.3]{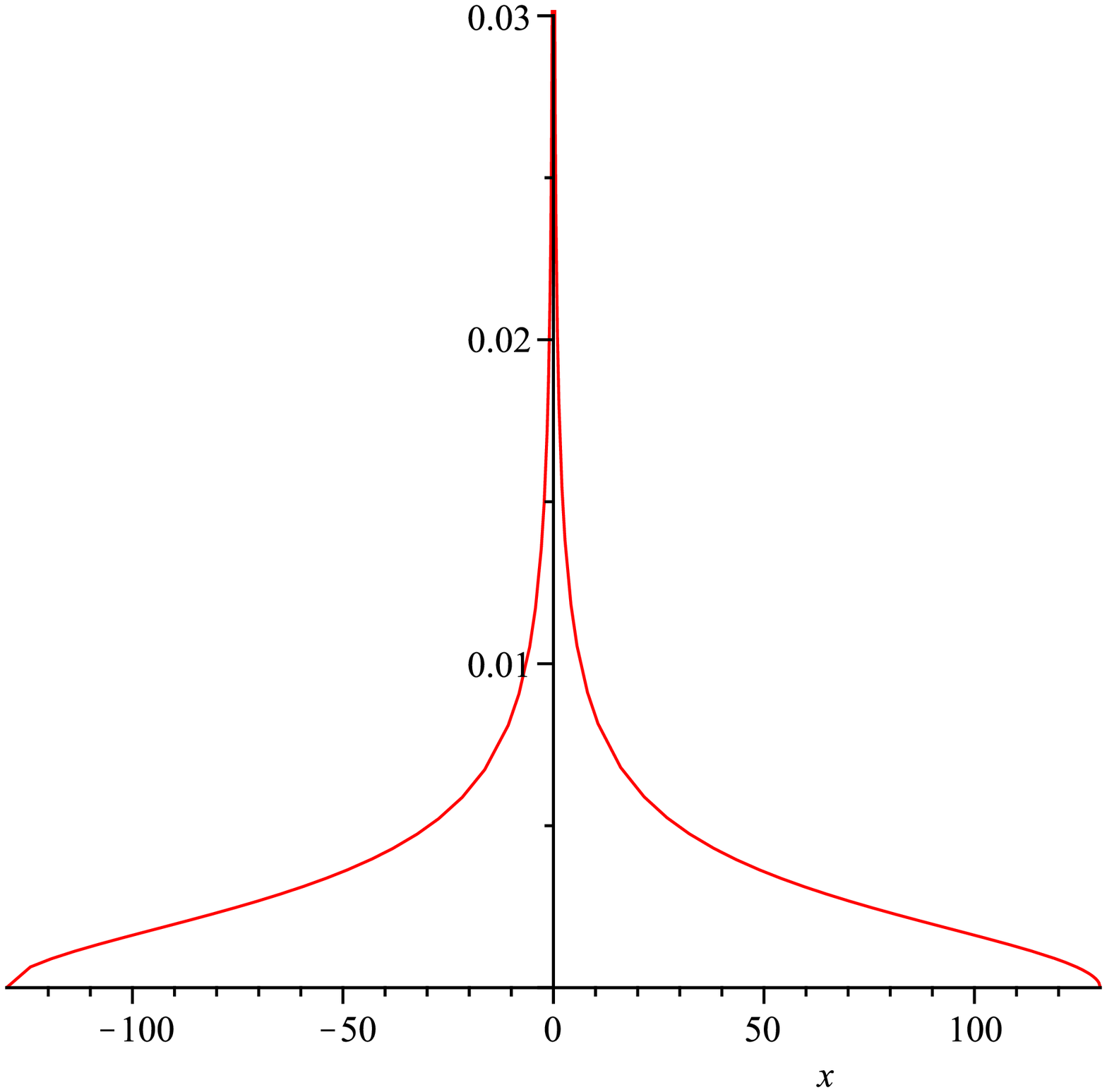}\hspace{5mm}
\subfigure{\label{fig:MP2}}\includegraphics[scale=0.3]{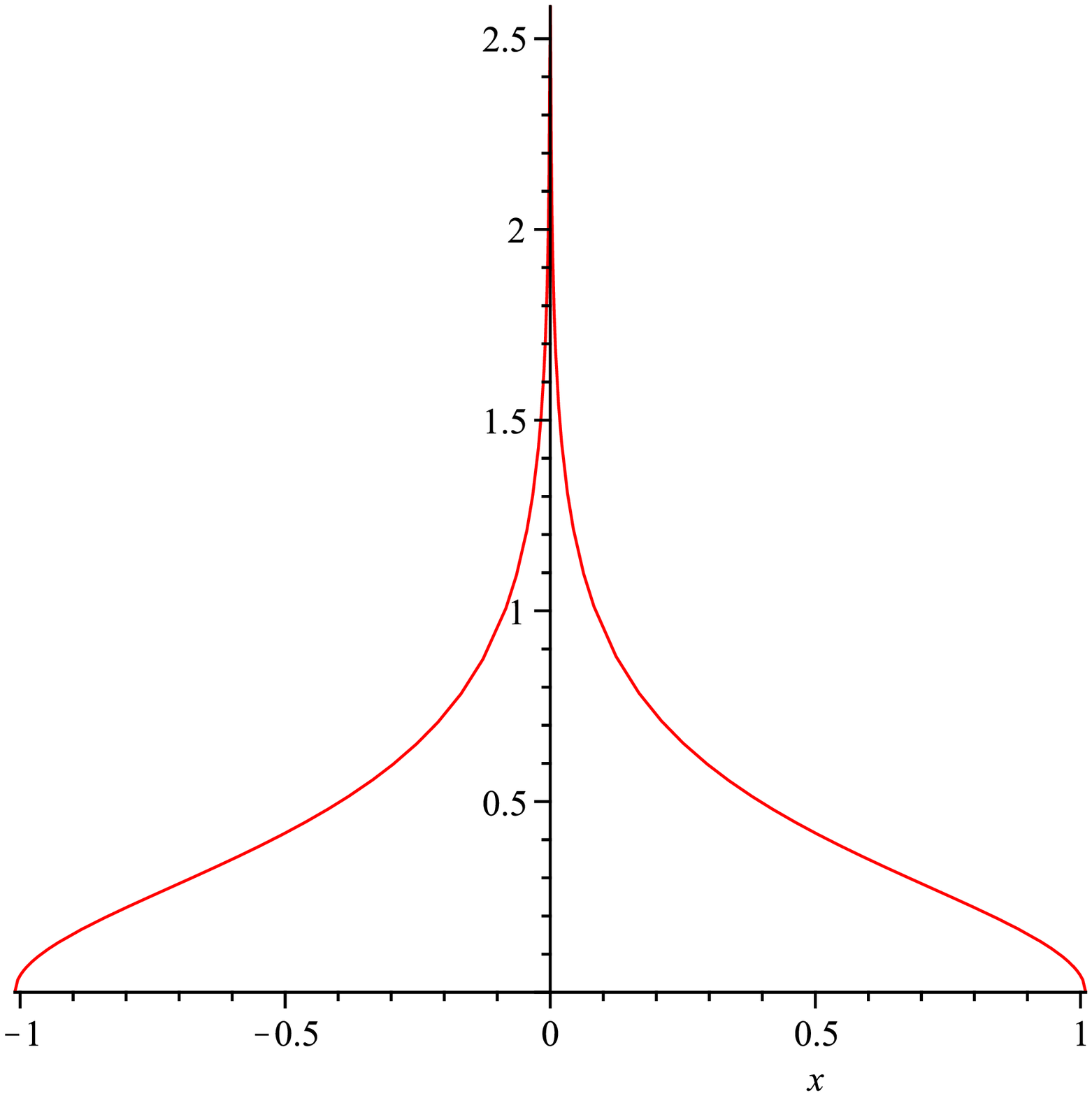}
\end{center}
\caption{Density of the measure $\nu_1$ for (a) $b=\tan t=100$ and (b) $b=\tan
t=0.1$.} \label{fig:MP}
\end{figure}

\begin{remark}
In general, there is no closed expression for the density of $\nu_1$, but in
the limiting case $t\to 0$, corresponding to the ordinary Meixner-Pollaczek
orthogonal polynomials, (\ref{densityrel3}) can be explicitly solved,
$z=8x^2-4+8i|x|\sqrt{1-x^2}$, giving $|z|=4$. Thus (\ref{densityrel2}) becomes
\begin{equation*}
w(x)=\frac{16-z^2}{16 z |x|}=\frac{16\overline{z}-|z|^2z}{16 |z|^2 |x|}=-i\frac{\operatorname{Im}(z)}{8|x|}=-i\sqrt{1-x^2},
\end{equation*}
and the density (\ref{densityrel3}) takes the form
\begin{align}\label{density3}
\frac{\ud \nu_1}{\ud x}
=\frac{1}{2\pi}\log\left(\frac{1+\sqrt{1-x^2}}{1-\sqrt{1-x^2}}\right)\chi_{\{|x|\leq 1\}}.
\end{align}
For non-zero values of $t\in(0,\pi/2)$, the density of $\nu_1$ turns out to
have the same qualitative features as \eqref{density3}; see Figure~\ref{fig:MP}
for some illustrations.

\end{remark}

Theorem~\ref{theorem:equilibriumproblem} will be proven in
Section~\ref{section:equilibrium}. The proof uses a general result for block
Toeplitz matrices \cite{Delvaux}, see also \cite{Duits}, together with some
explicit calculations using \eqref{algebraicequation:P}.

\subsection{Motivation: The six-vertex model with domain wall boundary conditions}
\label{section:sixvertex}

Multiple Meixner-Pollaczek polynomials appear in the study of the six-vertex
model in statistical mechanics, as we explain now.

Consider an $N\times N$ square lattice in the plane.
A configuration of the six-vertex model is an assignment of an
orientation to the edges of the lattice in such a way that each vertex is surrounded by
precisely two incoming and two outgoing edges. See Fig.~\ref{fig:grid5x5} for a
configuration with $N=5$. The name \emph{six-vertex model} refers to the fact that the
local behavior near each vertex is given by six possible edge configurations
(see Fig.~\ref{fig:6vertices}).

We consider the six-vertex model with domain wall boundary conditions
(DWBC). This means that the edges at the top and bottom of the lattice must be directed
outwards and those at the left and right of the lattice must be directed
inwards. 

\begin{figure}[h]
    \vspace{-10mm}

   \setlength{\unitlength}{0.1cm}
   \qquad\quad\,\,\,\begin{picture}(80,50)(-20,20)
       \put(10,10){\thicklines\circle*{1}}
       \put(10,20){\thicklines\circle*{1}}
       \put(10,30){\thicklines\circle*{1}}
       \put(10,40){\thicklines\circle*{1}}
       \put(10,50){\thicklines\circle*{1}}
       \put(20,10){\thicklines\circle*{1}}
       \put(20,20){\thicklines\circle*{1}}
       \put(20,30){\thicklines\circle*{1}}
       \put(20,40){\thicklines\circle*{1}}
       \put(20,50){\thicklines\circle*{1}}
       \put(30,10){\thicklines\circle*{1}}
       \put(30,20){\thicklines\circle*{1}}
       \put(30,30){\thicklines\circle*{1}}
       \put(30,40){\thicklines\circle*{1}}
       \put(30,50){\thicklines\circle*{1}}
       \put(40,10){\thicklines\circle*{1}}
       \put(40,20){\thicklines\circle*{1}}
       \put(40,30){\thicklines\circle*{1}}
       \put(40,40){\thicklines\circle*{1}}
       \put(40,50){\thicklines\circle*{1}}
       \put(50,10){\thicklines\circle*{1}}
       \put(50,20){\thicklines\circle*{1}}
       \put(50,30){\thicklines\circle*{1}}
       \put(50,40){\thicklines\circle*{1}}
       \put(50,50){\thicklines\circle*{1}}

       \put(1,10){\line(1,0){58}}
       \put(1,20){\line(1,0){58}}
       \put(1,30){\line(1,0){58}}
       \put(1,40){\line(1,0){58}}
       \put(,50){\line(1,0){58}}
       \put(10,1){\line(0,1){58}}
       \put(20,1){\line(0,1){58}}
       \put(30,1){\line(0,1){58}}
       \put(40,1){\line(0,1){58}}
       \put(50,1){\line(0,1){58}}
       \put(5,10){\thicklines{\vector(1,0){1}}}
       \put(5,20){\thicklines{\vector(1,0){1}}}
       \put(5,30){\thicklines{\vector(1,0){1}}}
       \put(5,40){\thicklines{\vector(1,0){1}}}
       \put(5,50){\thicklines{\vector(1,0){1}}}

       \put(55,10){\thicklines{\vector(-1,0){1}}}
       \put(55,20){\thicklines{\vector(-1,0){1}}}
       \put(55,30){\thicklines{\vector(-1,0){1}}}
       \put(55,40){\thicklines{\vector(-1,0){1}}}
       \put(55,50){\thicklines{\vector(-1,0){1}}}
       \put(10,5){\thicklines{\vector(0,-1){1}}}
       \put(20,5){\thicklines{\vector(0,-1){1}}}
       \put(30,5){\thicklines{\vector(0,-1){1}}}
       \put(40,5){\thicklines{\vector(0,-1){1}}}
       \put(50,5){\thicklines{\vector(0,-1){1}}}

       \put(10,55){\thicklines{\vector(0,1){1}}}
       \put(20,55){\thicklines{\vector(0,1){1}}}
       \put(30,55){\thicklines{\vector(0,1){1}}}
       \put(40,55){\thicklines{\vector(0,1){1}}}
       \put(50,55){\thicklines{\vector(0,1){1}}}

       \put(15,10){\thicklines\vector(1,0){1}}
       \put(15,20){\thicklines\vector(1,0){1}}
       \put(15,30){\thicklines\vector(1,0){1}}
       \put(15,40){\thicklines\vector(-1,0){1}}
       \put(15,50){\thicklines\vector(1,0){1}}
       \put(25,10){\thicklines\vector(1,0){1}}
       \put(25,20){\thicklines\vector(1,0){1}}
       \put(25,30){\thicklines\vector(-1,0){1}}
       \put(25,40){\thicklines\vector(1,0){1}}
       \put(25,50){\thicklines\vector(-1,0){1}}
       \put(35,10){\thicklines\vector(-1,0){1}}
       \put(35,20){\thicklines\vector(1,0){1}}
       \put(35,30){\thicklines\vector(-1,0){1}}
       \put(35,40){\thicklines\vector(1,0){1}}
       \put(35,50){\thicklines\vector(-1,0){1}}
       \put(45,10){\thicklines\vector(-1,0){1}}
       \put(45,20){\thicklines\vector(-1,0){1}}
       \put(45,30){\thicklines\vector(1,0){1}}
       \put(45,40){\thicklines\vector(-1,0){1}}
       \put(45,50){\thicklines\vector(-1,0){1}}

       \put(10,15){\thicklines\vector(0,-1){1}}
       \put(20,15){\thicklines\vector(0,-1){1}}
       \put(30,15){\thicklines\vector(0,1){1}}
       \put(40,15){\thicklines\vector(0,-1){1}}
       \put(50,15){\thicklines\vector(0,-1){1}}
       \put(10,25){\thicklines\vector(0,-1){1}}
       \put(20,25){\thicklines\vector(0,-1){1}}
       \put(30,25){\thicklines\vector(0,1){1}}
       \put(40,25){\thicklines\vector(0,1){1}}
       \put(50,25){\thicklines\vector(0,-1){1}}
       \put(10,35){\thicklines\vector(0,-1){1}}
       \put(20,35){\thicklines\vector(0,1){1}}
       \put(30,35){\thicklines\vector(0,1){1}}
       \put(40,35){\thicklines\vector(0,-1){1}}
       \put(50,35){\thicklines\vector(0,1){1}}
       \put(10,45){\thicklines\vector(0,1){1}}
       \put(20,45){\thicklines\vector(0,-1){1}}
       \put(30,45){\thicklines\vector(0,1){1}}
       \put(40,45){\thicklines\vector(0,1){1}}
       \put(50,45){\thicklines\vector(0,1){1}}

       \put(60,49){$x_1$}
       \put(60,39){$x_2$}
       \put(60,29){$x_3$}
       \put(60,19){$x_4$}
       \put(60,9){$x_5$}
       \put(48,-1){$y_5$}
       \put(38,-1){$y_4$}
       \put(28,-1){$y_3$}
       \put(18,-1){$y_2$}
       \put(8,-1){$y_1$}
    \end{picture}
    \vspace{20mm}
    \caption{A configuration of the six-vertex model with DWBC for $N=5$}
    \label{fig:grid5x5}
\end{figure}
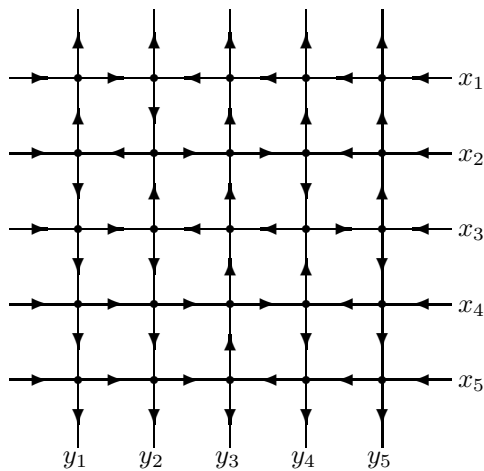

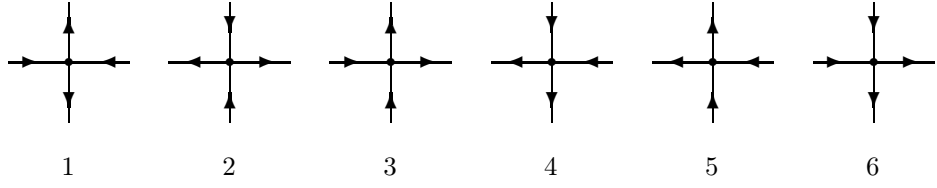
\begin{figure}[h]
\begin{center}
    \setlength{\unitlength}{1truemm}

   \begin{picture}(20,20)(0,0)
       \put(10,10){\thicklines\circle*{1}}
       \put(10,10){\line(1,0){8}}
       \put(10,10){\line(-1,0){8}}
       \put(10,10){\line(0,1){8}}
       \put(10,10){\line(0,-1){8}}
       \put(5,10){\thicklines\vector(1,0){1}}
       \put(15,10){\thicklines\vector(-1,0){1}}
       \put(10,5){\thicklines\vector(0,-1){1}}
       \put(10,15){\thicklines\vector(0,1){1}}
       \put(9,-5){1}
    \end{picture}
    \begin{picture}(20,20)(0,0)
       \put(10,10){\thicklines\circle*{1}}
       \put(10,10){\line(1,0){8}}
       \put(10,10){\line(-1,0){8}}
       \put(10,10){\line(0,1){8}}
       \put(10,10){\line(0,-1){8}}
       \put(5,10){\thicklines\vector(-1,0){1}}
       \put(15,10){\thicklines\vector(1,0){1}}
       \put(10,5){\thicklines\vector(0,1){1}}
       \put(10,15){\thicklines\vector(0,-1){1}}
       \put(9,-5){2}
    \end{picture}
    \begin{picture}(20,20)(0,0)
       \put(10,10){\thicklines\circle*{1}}
       \put(10,10){\line(1,0){8}}
       \put(10,10){\line(-1,0){8}}
       \put(10,10){\line(0,1){8}}
       \put(10,10){\line(0,-1){8}}
       \put(5,10){\thicklines\vector(1,0){1}}
       \put(15,10){\thicklines\vector(1,0){1}}
       \put(10,5){\thicklines\vector(0,1){1}}
       \put(10,15){\thicklines\vector(0,1){1}}
       \put(9,-5){3}
   \end{picture}
    \begin{picture}(20,20)(0,0)
       \put(10,10){\thicklines\circle*{1}}
       \put(10,10){\line(1,0){8}}
       \put(10,10){\line(-1,0){8}}
       \put(10,10){\line(0,1){8}}
       \put(10,10){\line(0,-1){8}}
       \put(5,10){\thicklines\vector(-1,0){1}}
       \put(15,10){\thicklines\vector(-1,0){1}}
       \put(10,5){\thicklines\vector(0,-1){1}}
       \put(10,15){\thicklines\vector(0,-1){1}}
       \put(9,-5){4}
    \end{picture}
\begin{picture}(20,20)(0,0)
       \put(10,10){\thicklines\circle*{1}}
       \put(10,10){\line(1,0){8}}
       \put(10,10){\line(-1,0){8}}
       \put(10,10){\line(0,1){8}}
       \put(10,10){\line(0,-1){8}}
       \put(5,10){\thicklines\vector(-1,0){1}}
       \put(15,10){\thicklines\vector(-1,0){1}}
       \put(10,5){\thicklines\vector(0,1){1}}
       \put(10,15){\thicklines\vector(0,1){1}}
       \put(9,-5){5}
    \end{picture}
\begin{picture}(10,10)(0,0)
       \put(10,10){\thicklines\circle*{1}}
       \put(10,10){\line(1,0){8}}
       \put(10,10){\line(-1,0){8}}
       \put(10,10){\line(0,1){8}}
       \put(10,10){\line(0,-1){8}}
       \put(5,10){\thicklines\vector(1,0){1}}
       \put(15,10){\thicklines\vector(1,0){1}}
       \put(10,5){\thicklines\vector(0,-1){1}}
       \put(10,15){\thicklines\vector(0,-1){1}}
       \put(9,-5){6}
    \end{picture}
\end{center}
\caption{Six types of vertices.} \label{fig:6vertices}
\end{figure}

To each of the $N$ rows of the lattice we associate a parameter $x_i\in\er$ and
similarly to each column a parameter $y_j\in\er$, $i,j=1,\ldots,N$. We also fix
a positive parameter $\gamma$ and we assume that $|x_i-y_j|<\gamma$ for all $i$
and $j$. We define the weight of the vertex in row $i$ and column $j$ according to its type as
$\sin(2\gamma)$ (type 1 or 2), $\sin(\gamma-(x_i-y_j))$ (type 3 or 4), or
$\sin(\gamma+(x_i-y_j))$ (type 5 or 6). 
Note that the weights are parameterized according to the so-called
disordered phase convention. 

The weight of a configuration is defined as the product of the weights
of all the $N\times N$ vertices in the configuration. The partition
function $Z_N = Z_N(x_1,\ldots,x_N,y_1,\ldots,y_N,\gamma)$ is defined as the
sum of the weights of all the consistent configurations of the $N\times N$
six-vertex model with DWBC. An explicit expression for the partition function
in terms of an $N\times N$ determinant was found by Izergin and Korepin
\cite{Iz,ICK}, see also \cite{Bax,KZ,Kup}.

\subsubsection*{The homogeneous case}

In the case where $x_i-y_j\equiv t$ for all $i,j=1,\ldots,N$,
for some fixed parameter $t\in(-\gamma, \gamma)$, the Izergin-Korepin formula reduces to
\begin{equation}\label{Z:homogenous:step1}
Z_N = Z_N(\gamma,t) = \frac{[\sin(\gamma+t)\sin(\gamma-t)]^{N^2}}{\left(
\displaystyle \prod_{n=0}^{N-1} n! \right)^2}\det M,
\end{equation}
where the matrix $M = (m_{i,j})_{i,j=1}^N$ has entries
\begin{equation}\label{Hankelmatrix}
m_{i,j} = \int_{-\infty}^{\infty} x^{i+j-2} e^{tx}w(x)\ dx
\end{equation}
with
\begin{equation}\label{def:mx} w(x) =
\frac{\sinh(\frac{x}{2}(\pi-2\gamma))}{\sinh(\frac{x}{2}\pi)},
\end{equation}
see e.g.~\cite{BF,ICK,Zinn}. The matrix $M$ is then precisely the \emph{moment
matrix} corresponding to the weight function $e^{tx}w(x)$ on the real line.
Standard considerations (e.g.\ \cite{Sz}) show that $\det M$ can be expressed
in terms of the monic orthogonal polynomials $P_n(x)$ defined by
$$ P_n(x) = x^n+\mathcal{O}(x^{n-1})
$$
for all $n$ and \begin{equation}\label{def:hn} \int_{-\infty}^{\infty}
P_n(x)x^m e^{tx}w(x)\ \ud x = h_n\delta_{m,n}
\end{equation}
for all $n,m$ with $m\leq n$. In fact, $\det M$ is expressed in terms of the
numbers $h_n$ in \eqref{def:hn} through the formula
\begin{equation}\label{moment:determinant} \det M = \prod_{n=0}^{N-1} h_n.
\end{equation}

Special choices of parameters lead to known families of
orthogonal polynomials. Indeed, Colomo and Pronko \cite{CP1,CP2}
showed that the Continuous Hahn, Meixner-Pollaczek and continuous Dual Hahn polynomials appear in this way.
In more general cases, the expressions \eqref{Z:homogenous:step1}--\eqref{moment:determinant} were
used to compute the asymptotics
of the partition function $Z_N$ for large $N$ in great
detail by means of the Riemann-Hilbert method~\cite{BF}, see also
\cite{BL1,BL2}.

\subsubsection*{The inhomogeneous case}

The situation in this paper corresponds to the case where
\begin{equation}\label{def:tj}x_i-y_j \equiv \left\{\begin{array}{ll} t_1, & 1\leq i\leq
n_1,\\ t_2, & n_1+1\leq i\leq n_1+n_2=N,\end{array}\right.\end{equation} for
some $t_1\neq t_2$ and all $j=1,\ldots,N$. Following the reasoning in
\cite{ICK}, see also the appendix in \cite{CP3}, 
one sees that the Izergin-Korepin formula reduces to
\begin{equation}\label{Z:blockhomogenous}
Z_N =
\frac{[\sin(\gamma+t_1)\sin(\gamma-t_1)]^{n_1N}[\sin(\gamma+t_2)\sin(\gamma-t_2)]^{n_2N}}{\left(\displaystyle \prod_{n=0}^{n_1-1}
n! \right) \left( \displaystyle \prod_{n=0}^{n_2-1} n!\right) \left(\displaystyle \prod_{n=0}^{N-1} n!\right) }\det M
\end{equation}
where the matrix $M= (m_{i,j})_{i,j=1}^N$ now has entries
\begin{equation}\label{def:M:inh} m_{i,j} = \left\{\begin{array}{ll}
\displaystyle \int_{-\infty}^{\infty} x^{i+j-2}e^{t_1x}w(x)\ud x, & 1\leq i\leq n_1,\\
\displaystyle \int_{-\infty}^{\infty} x^{i+j-n_1-2}e^{t_2x}w(x) \ud x, & n_1+1\leq i\leq
N,\end{array}\right.
\end{equation}
with $w$ still given by (\ref{def:mx}).
Thus $M$ is a moment matrix with respect to the system of weight functions
$e^{t_1 x}w(x)$ and $e^{t_2 x}w(x)$ on the real line.

The inhomogeneous model \eqref{Z:blockhomogenous} was studied in \cite{CP3, CPZ}
in connection with the calculation of the arctic curve. Leading order asymptotics 
of the partition function for the case $n_1 = 1$ in \eqref{Z:blockhomogenous}
was computed in \cite{CP3} for the disordered regime, and in \cite{CPZ}
for the anti-ferroelectric regime. The analysis is valid in fact 
for any $n_1$ as long as $n_1 = o(N)$ as $N \to \infty$.

It turns out that $\det M$ can be expressed in terms of monic \emph{multiple}
orthogonal polynomials $P_{k_1,k_2}(x)$ with respect to the system of weight
functions $e^{t_1 x}w(x)$ and $e^{t_2 x}w(x)$. The polynomial $P_{k_1,k_2}(x)$
is defined for any nonnegative integers $k_1,k_2$ by
$$ P_{k_1,k_2}(x) = x^{k_1+k_2}+\mathcal{O}(x^{k_1+k_2-1})
$$
and
\begin{equation}\label{def:hn:block} \int_{-\infty}^ {\infty}P_{k_1,k_2}(x)x^m e^{t_j x}w(x)\
\ud x = h_{k_1,k_2}^{(j)} \delta_{m,k_j},\quad m=0,\ldots,k_j,\quad j=1,2.
\end{equation}
We now have the following generalization of the formula
\eqref{moment:determinant}.

\begin{proposition} (Partition function)
Let $\gamma>0$, $t_1,t_2\in (-\gamma,\gamma)$, $t_1\neq t_2$ and recall the notation $w(x)$
in \eqref{def:mx} and $h_{k_1,k_2}^{(j)}$ in \eqref{def:hn:block}. Then the
moment matrix $M$ in \eqref{def:M:inh} has determinant
\begin{equation}\label{detM:block} \det M = \prod_{n=0}^{N-1}
h^{(j(n))}_{k_1(n),k_2(n)},
\end{equation}
where $\left(j(n)\right)_{n=0}^{N-1}$ is any sequence of $1$'s and $2$'s such
that
$1$ appears $n_1$ times and $2$ appears $n_2$ times. 
and where the associated sequences $(k_i(n)))_{n=0}^{N-1}$, $i=1,2$, are
defined recursively by $k_i(0)=0$ and
\begin{align*}k_i(n)-k_i(n-1)=\left\{\begin{array}{ll}1 &\quad \textrm{ if $j(n)=i$},\\
0, &\quad \textrm{ otherwise},\end{array}\right.\end{align*} for any
$n=1,\ldots,N-1$ and $i=1,2$.
\end{proposition}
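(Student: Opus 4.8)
The plan is to treat $M$ as a generalized moment matrix and to factor its determinant by passing to the multiple orthogonal polynomials, in exact analogy with the proof of the scalar identity \eqref{moment:determinant}. Writing $\mu^{(j)}_m=\int_{-\infty}^{\infty}x^m e^{t_j x}w(x)\,\ud x$, the entry in \eqref{def:M:inh} is $m_{i,j}=\int \phi_i(x)\,x^{j-1}\,\ud x$, where $\phi_i(x)=x^{i-1}e^{t_1 x}w(x)$ for $1\le i\le n_1$ and $\phi_i(x)=x^{i-n_1-1}e^{t_2 x}w(x)$ for $n_1<i\le N$. Thus the columns of $M$ are indexed by the monomial basis $1,x,\dots,x^{N-1}$, and $\det M$ is unchanged if this basis is replaced, by unit lower triangular column operations, by any other monic basis of the polynomials of degree $\le N-1$.

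First I would fix an admissible sequence $(j(n))_{n=0}^{N-1}$ and the staircase $(k_1(n),k_2(n))_{n=0}^{N-1}$ it generates, and perform the column operations that turn the monomial basis into the monic multiple orthogonal polynomials $P_{k_1(n),k_2(n)}$ defined by \eqref{def:hn:block}; these have degrees $0,1,\dots,N-1$, so the change of basis is unit lower triangular and preserves $\det M$. The transformed matrix $\widetilde M$ has entries $\widetilde m_{i,n+1}=\int \phi_i(x)P_{k_1(n),k_2(n)}(x)\,\ud x$, and by the multiple orthogonality \eqref{def:hn:block} each such entry is governed entirely by comparing the power carried by row $i$ with the relevant component $k_1(n)$ or $k_2(n)$: it vanishes when that power is strictly smaller, it equals the corresponding $h^{(\cdot)}_{k_1(n),k_2(n)}$ when the two are equal, and it is unconstrained when the power is larger.

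Next I would order the rows and columns so that these vanishing relations place all the unconstrained entries strictly below the diagonal. For the $L$-shaped staircase that first exhausts the $t_1$-weighted rows and then the $t_2$-weighted rows this is transparent, and $\widetilde M$ becomes triangular with diagonal entries exactly $h^{(j(n))}_{k_1(n),k_2(n)}$, yielding $\det M=\prod_{n=0}^{N-1}h^{(j(n))}_{k_1(n),k_2(n)}$ for that sequence. Equivalently, one may introduce the Gram determinants $\Delta_{k_1,k_2}$ of the truncated moment system, verify directly that $\det M=\Delta_{n_1,n_2}$ and that each normalizing constant is a ratio $\Delta_{(k_1,k_2)+e_j}/\Delta_{k_1,k_2}$ (up to a sign coming from the chosen ordering of the two weighted row blocks), so that the product telescopes along the lattice path prescribed by $(j(n))$.

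The step I expect to be the main obstacle is precisely the claim that \emph{every} admissible sequence yields the same product. The subtlety is that the reordering of rows and columns introduces permutation signs, and the $\Delta$-ratios for the ``off-type'' constants carry signs of their own, so path independence is not visible term by term; one must show that these signs cancel so that the telescoping product equals $+\Delta_{n_1,n_2}=\det M$ for every path. I would handle this by induction on the path, flipping a single lattice corner of the staircase at a time and invoking a Sylvester/Plücker-type determinant identity among the $\Delta_{k_1,k_2}$ to check that one corner flip leaves $\prod_n h^{(j(n))}_{k_1(n),k_2(n)}$ unchanged; since any two admissible sequences are connected by such flips, the formula then holds for all of them and in particular for the convenient $L$-shaped one treated above.
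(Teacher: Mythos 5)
Your core argument is the same as the paper's. The paper bi-orthogonalizes the row functions $f_i$ and the column monomials $g_j$ by unit-triangular transformations, turning the Gram matrix into a diagonal one with entries $h_n=h^{(j(n))}_{k_1(n),k_2(n)}$; your version, which performs only the column operations taking the monomial basis to the polynomials $P_{k_1(n),k_2(n)}$ and then reads off a triangular structure, is an equivalent $LU$-type reformulation of the same idea. Your verification for the $L$-shaped sequence $(1,\dots,1,2,\dots,2)$ --- that the orthogonality conditions place all structural zeros above the diagonal and that the diagonal entries are exactly the first non-vanishing moments --- is correct, and it is in fact all the paper itself does: the paper proves only this case and asserts that the extension to general sequences is straightforward.

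Where your proposal goes beyond the paper, it runs into a genuine problem. You correctly identify the permutation signs as the obstacle for a general admissible sequence, but your plan is to prove that they always cancel so that the telescoping product equals $+\det M$ for every path; this is false. Take $N=2$, $n_1=n_2=1$, and write $\mu^{(j)}_m=\int x^m e^{t_j x}w(x)\,\ud x$, so that $\det M=\mu^{(1)}_0\mu^{(2)}_1-\mu^{(1)}_1\mu^{(2)}_0$. The $L$-shaped sequence gives $h^{(1)}_{0,0}\,h^{(2)}_{1,0}=\mu^{(1)}_0\bigl(\mu^{(2)}_1-\mu^{(1)}_1\mu^{(2)}_0/\mu^{(1)}_0\bigr)=\det M$, but the reversed sequence, whose path passes through $(0,1)$, gives $h^{(2)}_{0,0}\,h^{(1)}_{0,1}=\mu^{(2)}_0\bigl(\mu^{(1)}_1-\mu^{(2)}_1\mu^{(1)}_0/\mu^{(2)}_0\bigr)=-\det M$. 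So a single corner flip changes the sign of the product, no Sylvester/Pl\"ucker identity will produce the cancellation you need, and for a general sequence the identity can only hold up to the sign of the shuffle permutation relating that sequence to the $L$-shaped one. This is as much a defect of the statement (and of the paper's ``straightforward to extend'' remark) as of your argument; if you restrict to the $L$-shaped sequence, or insert the factor $\operatorname{sgn}(\pi)$ of the row permutation your triangularization requires, your proof is complete.
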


\begin{proof}
We will give the proof for the particular sequence
$\left(j(n)\right)_{n=0}^{N-1}=( \underset{n_1 \textrm{
times}}{\underbrace{1,\ldots,1}}, \underset{n_2 \textrm{
times}}{\underbrace{2,\ldots,2}})$; it will be straightforward to extend the
proof to the more general sequences $\left(j(n)\right)_{n=0}^{N-1}$ in the
statement of the proposition. From the definition \eqref{def:M:inh} it follows
that
$$
M = \left(\langle  f_i,g_j \rangle \right)_{i,j=0}^{N-1}
$$
where we define the inner product
$$ \langle f,g\rangle = \int_{-\infty}^{\infty} f(x)g(x)\ \ud x
$$
and where we use the functions
\begin{align}
    \nonumber f_i(x) & = x^{i} e^{t_1 x}w(x), \qquad i = 0, \ldots, n_1-1, \\
    \nonumber f_{n_1+i}(x) & = x^{i} e^{t_2 x}w(x), \qquad i = 0, \ldots, n_2-1,
\end{align}
and
\begin{align}
    \nonumber g_n(x) & = x^{n}, \qquad n = 0, \ldots, N-1.
\end{align}
Define functions $\varphi_i(x)$ and $\psi_j(x)$, $i,j=0,\ldots,N-1$, by
bi-orthogonalizing the functions $f_i(x)$ and $g_j(x)$ in the following way:
$$ \varphi_i(x) = f_i(x)+\sum_{k=1}^{i-1} c_{k,i}f_k(x),
$$
$$ \psi_j(x) = g_j(x)+\sum_{k=1}^{j-1} b_{k,j}g_i(x),
$$
for appropriate coefficients $b_{k,j}$ and $c_{k,i}$, subject to the
orthogonality relations
\begin{equation}\label{def:hn:block:bis} \langle \varphi_i,\psi_j\rangle
= h_i\delta_{i,j}.
\end{equation}
It is not hard to see that we can identify $\psi_n(x) = P_{k_1(n),k_2(n)}(x)$
and $h_n=h^{(j(n))}_{k_1(n),k_2(n)}$.

Then we have that
$$ \det M \equiv \left(\langle  f_i,g_j \rangle \right)_{i,j=0}^{N-1} =
\left(\langle  \varphi_i,\psi_j \rangle \right)_{i,j=0}^{N-1} =
\prod_{n=0}^{N-1} h_n,
$$
where 
the last step follows in a trivial way from \eqref{def:hn:block:bis}. In view
of the identifications mentioned in the previous paragraph, we then obtain
\eqref{detM:block}.
\end{proof}

It is straightforward to generalize the above reasoning to the case of multiple
values of the differences in \eqref{def:tj}. In general, one could
even allow \emph{both}  $x_i$
and $y_j$ to take multiple values and then one should deal with
\emph{multiple orthogonal polynomials of mixed type}. We leave the details to the interested reader.

\subsubsection*{The free fermion line: Meixner-Pollaczek weights}

The value $\gamma= \pi/4$ corresponds to the so-called free fermion line.
As first observed in \cite{CP2}, in this case the above weight functions are related
to the Meixner-Pollaczek weight.
Indeed, we then have
\begin{equation}\label{weight:freefermion}
e^{t_j x}w(x)  = e^{t_j x}\frac{\sinh(\frac{x\pi}{4})}{\sinh(\frac{x\pi}{2})} =
e^{t_j x}\frac{1}{2\cosh(\frac{x\pi}{4})}.
\end{equation}
We may compare this with the classical Meixner-Pollaczek weight function
\cite{Koekoek},
\begin{equation}\label{weight:meixnerpollaczek}
\frac{1}{2\pi} e^{2tx}\Gamma(x+ix)\Gamma(\lam-ix).
\end{equation}
By invoking the identity
$$\Gamma(z)\Gamma(1-z) = \frac{\pi}{\sin(\pi z)},$$
we see that \eqref{weight:meixnerpollaczek} for $\lam=1/2$ and $t=2t_j$ reduces
to
\begin{equation}\label{weight:meixnerpollaczek:bis} \frac{1}{2\pi} e^{4t_j x}\Gamma(1/2+ix)\Gamma(1/2-ix) = e^{4t_j x}\frac{1}{2\cosh(\pi x)}.
\end{equation}
Thus the weight functions \eqref{weight:freefermion} and
\eqref{weight:meixnerpollaczek:bis} are the same up to a scaling of the
variable $x$ by a factor $4$.

It is an easy job to evaluate the Izergin-Korepin formula for the partition
function explicitly on the free fermion line, so our results will not lead to
new insights in that perspective. They should rather be considered as giving exact and
asymptotic information on the multiple Meixner-Pollaczek polynomials in its own
right.

\subsection{Outline of the paper}

The rest of this paper is organized as follows: In Section
\ref{section:recurrence} we consider multiple Meixner-Pollaczek polynomials
with respect to two general weights and prove Theorem
\ref{theorem:recurrenceMMPP}. In Section \ref{section:zeroasymptotics} we
establish Theorem~\ref{theorem:locallyToepeig} and
Lemma~\ref{lemma:interlacing}, which together lead to Proposition
\ref{theorem:zeroseigenvalues}. In Section \ref{section:equilibrium} we prove
Theorem \ref{theorem:equilibriumproblem} using the theory of eigenvalue
asymptotics for banded block Toeplitz matrices.

\section{
Proofs of Lemma~\ref{lemma:interlacing} and Theorem~\ref{theorem:recurrenceMMPP}}
\label{section:recurrence}
\subsection{Proof of Lemma~\ref{lemma:interlacing}}
\label{subsection:proofinterlacing}

In this section we prove Lemma~\ref{lemma:interlacing} on the fact that the
multiple Meixner-Pollaczek polynomials $P_{k_1,k_2}$ exist and are unique, and have real
and interlacing zeros. It is well known that the corresponding result about orthogonal polynomials with
respect to one weight function holds; we will rely on a generalization of this fact due to
Kershaw, \cite{Kershaw}. In our context it amounts to the statement that a
sufficient condition is that for any
non-negative integer $k=k_1+k_2$ and any polynomials $A$ and $B$  (not both identically
zero) of degrees at most $k_1$ and $k_2-1$ (or $k_1-1$ and $k_2$)
respectively, the function $f(x)=A(x)w_1(x)+B(x)w_2(x)$ has at most $k$
zeros. 
Since
\begin{equation*}A(x)w_1(x)+B(x)w_2(x)
=\frac{1}{2\pi}e^{2t_2x}\left|\Gamma\left(\lambda+ix\right)\right|^2
\left(A(x)e^{2(t_1-t_2)x}+B(x)\right),
\end{equation*}
the conclusion will certainly follow if we can show that, for any real $t$,
$g(x):=A(x)e^{2tx}+B(x)$ has at most $k$ zeros whenever $A$ and $B$ are
polynomials such that $\operatorname{deg}A+\operatorname{deg}B\leq k-1$. (By
convention, the zero polynomial has degree $-1$.) This can easily be shown
by induction, see e.g.\ \cite[p.~138]{NS}. $\bol$

\subsection{Some generalities}
For $j=1,2$, let $w_j$ be integrable real functions on the real line such that
the measures $w_j(x)\ud x$ have moments of all orders. Suppose that for any
non-negative integers $k_1$ and $k_2$  there exists a unique monic multiple
orthogonal polynomial $P_{k_1,k_2}$ with respect to the weights $w_{1}, w_{2}$,
that is, a polynomial of degree $k=k_1+k_2$ satisfying the orthogonality
conditions
\begin{equation*}
\int_{-\infty}^{\infty} P_{k_1,k_2}(x)x^m w_j(x)\,\ud x= 0,\quad \textrm{ for
}m=0,\ldots, k_j-1,\quad j=1,2.
\end{equation*}
Let $\gamma_{k_1,k_2}$ denote the sub-leading coefficient of $P_{k_1,k_2} $, so
that $P_{k_1,k_2}(x)=x^k+ \gamma_{k_1,k_2}x^{k-1}+\mathcal{O}(x^ {k-2})$.
For $j=1,2$, put
\begin{equation}\label{def:h} h_{k_1,k_2}^{(j)}:=\int_{-\infty}^{\infty} x^{k_j} P_{k_1,k_2}(x)w_j(x)\ud x,
\end{equation} the first
non-vanishing moments.

We begin by stating a general four term recurrence formula for multiple
orthogonal polynomials on the real line in terms of their sub-leading
coefficients and first non-vanishing moments. This standard fact is
shown in \cite{Mah}  and can also be derived from the Riemann-Hilbert problem
for multiple orthogonal polynomials, see \cite{Ger}.

\begin{proposition}\label{generalrecurrence}
For any positive integers $k_1$ and $k_2$, the multiple orthogonal polynomials
satisfy the following four term recurrence relation:
\begin{align}\label{recurrence1}
P_{k_1+1,k_2}(z)=&(z+\gamma_{k_1+1,k_2}-\gamma_{k_1,k_2})P_{k_1,k_2}(z)\nonumber\\
&-\left(\frac{h_{k_1,k_2}^{(1)}}{h_{k_1-1,k_2}^{(1)}}+\frac{h_{k_1,k_2}^{(2)}}
{h_{k_1,k_2-1}^{(2)}}\right)P_{k_1,k_2-1}(z)\nonumber\\
&-\frac{h_{k_1,k_2}^{(1)}}{h_{k_1-1,k_2-1}^{(1)}}P_{k_1-1,k_2-1}(z).
\end{align}
By symmetry between the two indices, a corresponding recurrence relation for
$P_{k_1,k_2+1}(z)$ is obtained by interchanging $k_1$ and $k_2$ and superindices.
\end{proposition}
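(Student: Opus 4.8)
The plan is to prove the four-term recurrence \eqref{recurrence1} by the classical orthogonality-matching technique: expand $P_{k_1+1,k_2}(z)$ in the basis of lower-degree multiple orthogonal polynomials and determine the coefficients by testing against the weights $w_1$ and $w_2$. First I would observe that $P_{k_1+1,k_2}(z)-zP_{k_1,k_2}(z)$ is a polynomial of degree at most $k=k_1+k_2$, so it admits an expansion
\begin{equation*}
P_{k_1+1,k_2}(z)-zP_{k_1,k_2}(z) = \sum_{(\ell_1,\ell_2)} \alpha_{\ell_1,\ell_2} P_{\ell_1,\ell_2}(z),
\end{equation*}
where the sum runs over index pairs with $\ell_1+\ell_2\leq k$, using the fact that the $P_{\ell_1,\ell_2}$ along any admissible lattice path form a basis for polynomials of the corresponding degree. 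The content of the proposition is that only the four terms with indices $(k_1,k_2)$, $(k_1,k_2-1)$ and $(k_1-1,k_2-1)$ survive, and the leading coefficient matches because both sides are monic of degree $k+1$.

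The key step is to show that most coefficients $\alpha_{\ell_1,\ell_2}$ vanish. For this I would test the identity against $x^m w_j(x)$ and integrate, exploiting the orthogonality conditions \eqref{def:h} satisfied by each $P_{\ell_1,\ell_2}$. The crucial input is that $zP_{k_1,k_2}(z)$, when integrated against $x^m w_j(x)$, is nonzero only for very few values of $m$ near the threshold $m=k_j$: by the orthogonality of $P_{k_1,k_2}$ the products $\int x^{m+1} P_{k_1,k_2} w_j\,\ud x$ vanish except when $m+1$ reaches the first non-vanishing moment, which is exactly where the $h^{(j)}$ defined in \eqref{def:h} enter. Matching these pairings with $m=0,1,\ldots$ against both weights forces all but the named coefficients to be zero, and simultaneously pins down the surviving coefficients as ratios of the $h^{(j)}_{\ell_1,\ell_2}$. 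The coefficient of $P_{k_1,k_2}$ is obtained separately by comparing the sub-leading (degree $k$) coefficients, which produces the $\gamma_{k_1+1,k_2}-\gamma_{k_1,k_2}$ term.

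The identification of the coefficient of $P_{k_1,k_2-1}$ is the most delicate point, since it receives contributions linked to \emph{two} distinct first non-vanishing moments, giving the sum $h^{(1)}_{k_1,k_2}/h^{(1)}_{k_1-1,k_2}+h^{(2)}_{k_1,k_2}/h^{(2)}_{k_1,k_2-1}$. I expect the main obstacle to be this bookkeeping: one must carefully track, for each of the two weights separately, which monomial $x^m$ first detects $P_{k_1,k_2}$ and which first detects the candidate lower polynomials, and verify that the two weight-channels combine additively rather than interfering. A clean way to organize this is to pair the recurrence against $x^{k_1-1}w_1$ and against $x^{k_2-1}w_2$ in turn, isolating one $h$-ratio at a time. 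Since the statement explicitly cites \cite{Mah} and the Riemann-Hilbert derivation in \cite{Ger} as standard references, I would present the argument at the level of these orthogonality-matching computations and defer the fully explicit verification of each ratio to the cited sources, noting finally that the symmetric recurrence for $P_{k_1,k_2+1}$ follows by interchanging the roles of the two indices and weights.
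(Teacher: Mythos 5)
Your orthogonality-matching sketch is the standard argument and is sound: expanding $zP_{k_1,k_2}(z)-P_{k_1+1,k_2}(z)$ in a step-line basis of multiple orthogonal polynomials, killing the lower-order coefficients by testing against $x^m w_j$, fixing the coefficient of $P_{k_1,k_2}$ from the sub-leading terms, and identifying the remaining two coefficients by pairing with $x^{k_1-1}w_1$ and $x^{k_2-1}w_2$ is precisely the argument that the cited sources (Mahler's perfect systems, and the Riemann--Hilbert derivation of Van Assche--Geronimo--Kuijlaars) carry out in detail. The paper itself offers no proof beyond those citations, so your proposal takes essentially the same route and is, if anything, more explicit than the paper's treatment.
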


\subsection{Rodrigues formula for multiple Meixner-Pollaczek polynomials}
\label{subsection:MMPPrec}

From standard results on the ordinary
Meixner-Pollaczek polynomials, (see e.g. equations (1.7.2) (1.7.4) in
\cite{Koekoek} with  $\phi=t_1+\pi/2$), we have the orthogonality relation
\begin{equation}\label{orthogonality}
\int_{-\infty}^{\infty} P_{m,0}(x)P_{n,0}(x)w_1(x)\,\ud x= \frac{n!\Gamma(n+2\lambda)}{(2\cos
t_1)^{2\lambda+2n}}\delta_{mn},
\end{equation}
and the recurrence relation
\begin{equation}\label{recurrenceorthogonal}
xP_{n,0}(x)=P_{n+1,0}(x)+(n+\lambda)(\tan t_1) P_{n,0}(x)+
\frac{n(n+2\lambda-1)} {4 \cos^2 \!t_1} P_{n-1,0}(x).
\end{equation}

For any real parameter $t$, define the finite difference operator $L_t$,
acting on functions $f:\mathbb{C}\to \mathbb{C}$, by the equation
\begin{align}
\left(L_t f\right)(x) &=e^{it} f(x+i/2)-e^{-it} f(x-i/2).
\end{align}

\begin{lemma}\label{commutation}
For any real $t_1$, $t_2$, the operators $L_{t_1}$ and $L_{t_2}$ commute.
\end{lemma}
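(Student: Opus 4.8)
The plan is to verify the commutation $L_{t_1} L_{t_2} = L_{t_2} L_{t_1}$ by direct computation, exploiting the fact that each $L_t$ is a combination of the shift operators $f(x) \mapsto f(x \pm i/2)$ together with multiplication by scalars. The key observation is that the two basic shifts, which I will denote $S_+ : f(x) \mapsto f(x+i/2)$ and $S_- : f(x) \mapsto f(x-i/2)$, commute with each other, since $S_+ S_- f(x) = f(x + i/2 - i/2) = f(x) = S_- S_+ f(x)$, and more generally shifts by any complex amounts always commute. Writing $L_t = e^{it} S_+ - e^{-it} S_-$, the entire statement reduces to the fact that a linear combination with constant coefficients of mutually commuting operators is itself a family of mutually commuting operators.

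First I would expand the composition $L_{t_1} L_{t_2}$ explicitly. Applying the definition twice gives
\begin{align*}
\left(L_{t_1} L_{t_2} f\right)(x) &= e^{it_1}\left(L_{t_2} f\right)(x+i/2) - e^{-it_1}\left(L_{t_2} f\right)(x-i/2)\\
&= e^{it_1}\left(e^{it_2} f(x+i) - e^{-it_2} f(x)\right) - e^{-it_1}\left(e^{it_2} f(x) - e^{-it_2} f(x-i)\right).
\end{align*}
Collecting terms, this equals
$$
e^{i(t_1+t_2)} f(x+i) - \left(e^{i(t_1-t_2)} + e^{-i(t_1-t_2)}\right) f(x) + e^{-i(t_1+t_2)} f(x-i).
$$
The crucial point is that the resulting expression is manifestly symmetric under interchanging $t_1$ and $t_2$: the outer coefficients $e^{\pm i(t_1+t_2)}$ are symmetric, and the middle coefficient $e^{i(t_1-t_2)} + e^{-i(t_1-t_2)} = 2\cos(t_1-t_2)$ is even in $t_1 - t_2$ and hence also symmetric. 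Therefore performing the same computation for $L_{t_2} L_{t_1} f$ yields exactly the same expression, which proves the claim.

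This proof has no real obstacle, since everything follows from the commutativity of shifts by complex amounts and the linearity of $L_t$ in its two shift components; the only thing to be careful about is bookkeeping the signs and the half-integer shifts correctly when composing, so that the cross term $f(x)$ picks up both contributions $-e^{i(t_1-t_2)}$ and $-e^{-i(t_1-t_2)}$. An alternative and even cleaner phrasing would be to note that, as operators on a suitable space of entire (or meromorphic) functions, $L_t = e^{it} S_+ - e^{-it} S_-$ lies in the commutative algebra generated by $S_+$ and $S_-$; since any two elements of a commutative algebra commute, $L_{t_1}$ and $L_{t_2}$ commute automatically. I would present the explicit two-line computation above, as it is self-contained and makes the symmetry in $t_1 \leftrightarrow t_2$ transparent.
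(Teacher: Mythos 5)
Your proof is correct and is precisely the ``straightforward calculation'' that the paper's one-line proof alludes to: expanding $L_{t_1}L_{t_2}f$ and observing that the resulting coefficients $e^{\pm i(t_1+t_2)}$ and $2\cos(t_1-t_2)$ are symmetric in $t_1\leftrightarrow t_2$. No issues.
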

\begin{proof} Straightforward calculation.
\end{proof}
\begin{lemma}\label{PI}
Let $t$ be a real number and $f$, $g$ analytic functions in a domain containing
the strip $\Omega=\{z:|\operatorname{Im} (z)|\leq 1/2\}$, and assume that there
are positive numbers $C$ and $\epsilon$ such that
 \begin{equation}\label{decay}|f(z)g(z)e^{2tz}| <C e^{-\epsilon|\operatorname{Re}(z)|}
 \end{equation} for all $z\in \Omega$. 
Then the following integration by parts formula holds:
\begin{equation}\label{IP}
\int_{-\infty}^{\infty} f(x)\left(L_t g\right)(x)e^{2tx}\,\ud x=-\int_{-\infty}^{\infty}
\left(L_0 f\right)(x) g(x)e^{2tx}\,\ud x.
\end{equation}

\end{lemma}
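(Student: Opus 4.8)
The plan is to verify \eqref{IP} by contour deformation, exploiting the analyticity of $f$ and $g$ on the strip $\Omega$. The key preliminary observation is that the weight $e^{2tx}$ and the phases $e^{\pm it}$ in the definition of $L_t$ combine cleanly: since $e^{it}e^{2tx}=e^{2t(x+i/2)}$ and $e^{-it}e^{2tx}=e^{2t(x-i/2)}$, setting $G(z):=g(z)e^{2tz}$ I may rewrite the left-hand side of \eqref{IP} as
\[
\int_{-\infty}^{\infty} f(x)\bigl(G(x+i/2)-G(x-i/2)\bigr)\,\ud x=\int_{-\infty}^{\infty} f(x)\,(L_0 G)(x)\,\ud x .
\]
Here $G$ is analytic on $\Omega$ and inherits the decay in \eqref{decay}. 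Thus the statement reduces to showing that $L_0$ is anti-self-adjoint for the plain Lebesgue pairing, namely $\int_{\mathbb{R}} f\,(L_0 G)\,\ud x=-\int_{\mathbb{R}} (L_0 f)\,G\,\ud x$; unfolding $G$ at the end restores the weight $e^{2tx}$ on the right-hand side.

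Next I would treat the two shifted terms separately. In $\int_{\mathbb{R}} f(x)G(x+i/2)\,\ud x$ I substitute $w=x+i/2$ to land on the horizontal line $\mathbb{R}+i/2$, obtaining $\int_{\mathbb{R}+i/2} f(w-i/2)G(w)\,\ud w$; likewise $\int_{\mathbb{R}} f(x)G(x-i/2)\,\ud x$ becomes $\int_{\mathbb{R}-i/2} f(w+i/2)G(w)\,\ud w$. The integrand $f(w-i/2)G(w)$ is holomorphic in the half-width strip $0\le\operatorname{Im}(w)\le 1/2$ (where both $f(\,\cdot-i/2)$ and $G$ are analytic), while $f(w+i/2)G(w)$ is holomorphic in $-1/2\le\operatorname{Im}(w)\le 0$. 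Applying Cauchy's theorem on the rectangle whose horizontal sides lie on $\mathbb{R}$ and $\mathbb{R}\pm i/2$, each line integral equals its counterpart over $\mathbb{R}$, namely $\int_{\mathbb{R}} f(x-i/2)G(x)\,\ud x$ and $\int_{\mathbb{R}} f(x+i/2)G(x)\,\ud x$. Subtracting these produces exactly $-\int_{\mathbb{R}}(L_0 f)(x)\,G(x)\,\ud x$, which is the desired identity.

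The one genuine point to justify is that the two vertical sides of each rectangle, at $\operatorname{Re}(w)=\pm R$, contribute nothing as $R\to\infty$. This is where hypothesis \eqref{decay} enters: a shift of the argument by $\pm i/2$ leaves $\operatorname{Re}(w)$ unchanged and keeps $w$ inside $\Omega$, so one expects the integrand to decay uniformly as $|\operatorname{Re}(w)|\to\infty$ with $\operatorname{Im}(w)$ ranging over the compact interval $[-1/2,1/2]$, making the vertical contributions $O(e^{-\epsilon R})$. I expect this to be the main (though essentially routine) obstacle. Strictly, \eqref{decay} bounds the \emph{unshifted} product $fge^{2t\,\cdot}$, whereas the vertical edges involve $f$ and $g$ evaluated at points shifted by $\pm i/2$; one therefore needs that $f$ and $g$ individually have at most comparable exponential growth, so that the shifted products $f(w\mp i/2)g(w)e^{2tw}$ obey the same bound. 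In the situations to which the lemma is applied (products of polynomials with $|\Gamma(\lambda+ix)|^2$-type factors) this holds, and the analyticity bookkeeping—that the relevant products are holomorphic in the correct half-strips—follows directly from the assumed analyticity of $f$ and $g$ on all of $\Omega$.
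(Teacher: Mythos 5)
Your proof is correct and follows essentially the same route as the paper's: rewrite the two terms of $L_t g$ so that the weight becomes $e^{2tu}$, shift each contour to $\mathbb{R}\pm i/2$ by Cauchy's theorem, and recombine the resulting integrals into $-\int (L_0 f)\,g\,e^{2tu}\,\ud u$. The paper's version is terser --- it does not spell out the vanishing of the vertical edges, nor the slight mismatch between the stated decay hypothesis on $f(z)g(z)e^{2tz}$ and the shifted products $f(u\mp i/2)g(u)e^{2tu}$ actually needed on those edges --- so your extra care there is a welcome refinement rather than a different method.
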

\begin{proof}
Using the definition of $L_t$, we can split the integral of the left hand side
into two terms and  shift the contours of integration from the real line
to $\mathbb{R}\pm i/2$ for the first and second terms, respectively, by Cauchy's theorem. This gives
\begin{align*}
\int_{-\infty}^{\infty} f(x)e^{2tx}\left(L_t g\right)(x)\,\ud x &=\int_{-\infty}^{\infty}
f(x)\left(e^{t(2x+i)} g(x+i/2)-e^{t(2x-i)} g(x-i/2)\right)\,\ud x\\
&=\int_{-\infty}^{\infty} \left(f(u-i/2)e^{2tu} g(u)-f(u+i/2)e^{2tu} g(u)\right)\,\ud u\\
&=-\int_{-\infty}^{\infty} \left(L_0 f\right)(u)e^{2tu} g(u)\,\ud u.
\end{align*}
\end{proof}


We can now derive a Rodrigues type formula for the multiple Meixner-Pollaczek
polynomials, which will be the tool to calculate explicit recurrence
coefficients.

\begin{proposition}\label{representation}
Let $k_1$ and $k_2$ be non-negative integers and put $k=k_1+k_2$.   Let
$L^m:= \underset{m \textrm{
times}}{\underbrace{L\circ\cdots\circ L}}$
denote the $m$th iterate of an operator $L$. Then, for any $t_1, t_2 \in (-\pi/2, \pi/2)$
with $t_1 \neq t_2$, and any $\lambda>0$, the multiple Meixner-Pollaczek polynomial $P_{k_1,k_2}$ satisfies the Rodrigues
formula
\begin{equation}
\left( L_{t_1}^{k_1} L_{t_2}^{k_2}\left(\left|\Gamma\left(\lambda+k/2+i
\ \cdot \ \right)\right|^2\right)\right)(x)
=c_{k_1,k_2}P_{k_1,k_2}(x)\left|\Gamma\left(\lambda+ix\right)\right|^2,
\end{equation}
where 
\begin{equation}c_{k_1,k_2}=\left(-2i\right)^{k}(\cos t_1)^{k_1} (\cos t_2)^{k_2}.
\end{equation}
\end{proposition}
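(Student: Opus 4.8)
The plan is to prove the Rodrigues formula by induction on $k = k_1 + k_2$, establishing a more fundamental identity first: that applying a single operator $L_{t_1}$ (or $L_{t_2}$) to the relevant gamma-function expression raises the index $k_1$ (or $k_2$) by one, up to an explicit constant. Concretely, I would first verify the base case $k_1 = k_2 = 0$, where the claimed identity reads $\left|\Gamma(\lambda + i\,\cdot\,)\right|^2 = c_{0,0} P_{0,0}(x)\left|\Gamma(\lambda+ix)\right|^2$ with $c_{0,0} = 1$ and $P_{0,0} \equiv 1$; this is immediate. The crux is then the single-step lemma.

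\emph{The key step.} I would show that for a monic polynomial $R$ of degree $m$ and any $t$,
\begin{equation*}
\left(L_{t}\left(R\,\left|\Gamma(\lambda + 1/2 + i\,\cdot\,)\right|^2\right)\right)(x)
= (-2i)(\cos t)\,\widetilde{R}(x)\,\left|\Gamma(\lambda + i\,\cdot\,)\right|^2(x),
\end{equation*}
where $\widetilde R$ is again monic of degree $m+1$. The mechanism is the functional equation $\Gamma(\lambda + 1/2 + ix)(\cdots)$ combined with $\left|\Gamma(\lambda+1/2+i(x\pm 1/2))\right|^2 = \Gamma(\lambda + ix)\Gamma(\lambda+1 \mp ix)$-type shifts, so that after evaluating $f(x+i/2)$ and $f(x-i/2)$ the half-integer shift in the argument of $\Gamma$ converts the two terms into a common factor $\left|\Gamma(\lambda+ix)\right|^2$ times a difference that is polynomial in $x$. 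The constant $-2i\cos t = -(e^{it} + e^{-it})\cdot i \cdots$ emerges from tracking the $e^{\pm it}$ weights in the definition of $L_t$ against the factor $(\lambda + ix)$ produced by the gamma shift. Iterating this identity $k_1$ times with $t=t_1$ on top of the $\lambda$-shifted seed, and $k_2$ times with $t = t_2$ (legitimately interchanging the order by Lemma~\ref{commutation}), produces $(-2i)^k(\cos t_1)^{k_1}(\cos t_2)^{k_2}$ times a monic polynomial of degree $k$ times $\left|\Gamma(\lambda+ix)\right|^2$, matching the claimed constant $c_{k_1,k_2}$.

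\emph{Identifying the polynomial as $P_{k_1,k_2}$.} It remains to verify that the monic polynomial so produced, call it $\Pi_{k_1,k_2}$, satisfies the defining orthogonality conditions. For this I would use the integration-by-parts formula of Lemma~\ref{PI}: to test $\int \Pi_{k_1,k_2}(x)\,x^m\,w_1(x)\,\ud x$ for $0 \le m \le k_1 - 1$, I rewrite $\Pi_{k_1,k_2}\,w_1$ via the Rodrigues expression as $c_{k_1,k_2}^{-1} e^{2t_1 x}\,L_{t_1}^{k_1}L_{t_2}^{k_2}(\cdots)$ (absorbing the $e^{2t_1 x}$ prefactor hidden in $w_1$), and move all $k_1$ copies of $L_{t_1}$ onto the monomial $x^m$ using $L_0$. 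Since $L_0$ lowers polynomial degree by one and $\deg x^m = m < k_1$, the $k_1$-fold application of $L_0$ annihilates $x^m$, forcing the integral to vanish; the analogous argument with $t_2$ handles the second family of orthogonality conditions. The decay hypothesis \eqref{decay} needed to justify the contour shifts is guaranteed by the exponential decay of $\left|\Gamma(\lambda+ix)\right|^2 \sim e^{-\pi|x|}$ against the bounded $e^{2t_j x}$ with $|t_j| < \pi/2$. By the uniqueness half of Lemma~\ref{lemma:interlacing}, $\Pi_{k_1,k_2} = P_{k_1,k_2}$.

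\emph{Main obstacle.} The principal difficulty is the single-step gamma-shift computation: getting the functional equation for $\left|\Gamma\left(\lambda + (k+1)/2 + i\,\cdot\,\right)\right|^2$ to collapse cleanly onto $\left|\Gamma(\lambda + k/2 + i\,\cdot\,)\right|^2$ under $L_{t}$, while simultaneously confirming that the resulting polynomial factor is genuinely monic of the correct degree and that the accumulated constant is exactly $(-2i)\cos t$ at each stage. Care is required because the seed gamma argument carries a shift by $k/2$ that must increment consistently with the index being raised; I would set up the induction so that the inductive hypothesis at level $k$ carries the seed $\left|\Gamma(\lambda + k/2 + i\,\cdot\,)\right|^2$, making each application of $L_{t_1}$ or $L_{t_2}$ decrement the seed shift by $1/2$ and raise the corresponding polynomial degree by one, so the bookkeeping of both the constant and the degree closes automatically.
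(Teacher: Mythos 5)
Your proposal is correct and follows essentially the same route as the paper: the single-step gamma-shift identity $L_t\bigl(R\,|\Gamma(\lambda+1/2+i\,\cdot\,)|^2\bigr)=(-2i\cos t)\,\widetilde R\,|\Gamma(\lambda+i\,\cdot\,)|^2$ (the paper phrases it as $L_t(R_mf_{n+1})=R_{m+1}f_n$ with leading coefficient multiplied by $-2i\cos t$), iteration combined with the commutation lemma to accumulate $c_{k_1,k_2}$, and then verification of the orthogonality conditions via the integration-by-parts lemma, with $L_0^{k_1}$ annihilating $x^m$ for $m<k_1$ and Stirling's formula justifying the contour shifts. No substantive differences.
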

\begin{proof}
Define the function $$f_m(x)=\left|\Gamma(\lambda+m/2+ix)\right|^2,$$ for any
non-negative integer $m$. First of all, we note that by the properties
$\Gamma(z+1)=z\Gamma(z)$ and $\Gamma(\overline{z})=\overline{\Gamma(z)}$ of the
gamma function, it follows immediately from the definitions that if $R_m(x)$ is
a polynomial of degree $m$ with leading coefficient $a_m$, then
\begin{align*}
\left(L_t \left(
R_m f_{1}\right)\right)(x)=&e^{it}R_m(x+i/2)\Gamma\left(\lambda+ix\right)\Gamma
\left(\lambda+1-ix\right)\\
&-e^{- it
}R_m(x-i/2)\Gamma\left(\lambda+1+ix\right)\Gamma\left(\lambda-ix\right)\\
=&\left(e^{it}\left(\lambda-ix\right)R_m(x+i/2)-e^{-
it}\left(\lambda+ix\right)R_m(x-i/2)\right)f_{0}(x)\\ =&R_{m+1}(x)f_{0}(x)
\end{align*}
where $R_{m+1}$ is a polynomial of degree $m+1$ with leading coefficient
\begin{equation}\label{Lofpolynomial}a_{m+1}=-2i a_m \cos t.\end{equation}
Equivalently, by simply replacing the parameter $\lambda$ by $\lambda+n/2$,
\begin{equation}\label{recursionlambda}
\left(L_t \left( R_m f_{n+1}\right)\right)(x)=R_{m+1}(x) f_{n}(x).
\end{equation}
By induction over $k$, it follows from
(\ref{recursionlambda}) and (\ref{Lofpolynomial}) that
\begin{equation*}
\left(L_t^k f_k\right)(x) =R_k(x)f_0(x),
\end{equation*}
where $R_k$ is a polynomial of degree $k$ in $x$ with leading
coefficient $\left(-2i\cos t \right)^k$. Therefore,
\begin{equation*}
\left(L_{t_1}^{k_1}L_{t_2}^{k_2}f_k\right)(x)
=c_{k_1,k_2}\tilde{P}_{k_1,k_2}(x)f_0(x),
\end{equation*}
for some monic $k$th degree polynomial $\tilde{P}_{k_1,k_2}$.

Using this representation we can check the orthogonality conditions. Let $m<
k_1$ be a non-negative integer. Note that the choices
$f(z)=\left|\Gamma\left(c+iz\right)\right|^2$ for any real $c>1/2$ and $g$ a
polynomial satisfy condition (\ref{decay}) of Lemma \ref{PI}; this can be seen
from the asymptotics of the Gamma function valid  as $|z|\to \infty$ with
$|\operatorname{Arg}(z)|$ <$\pi-\epsilon$,
$$\Gamma(z)=\sqrt{2\pi}{z}\left(\frac{z}{e}\right)^{z}(1+o(1))$$
(Stirling's formula).
By definition of the weight function and
applying Lemma \ref{PI} $k_1$ times, we get
\begin{align*}\int_{-\infty}^{\infty} \tilde{P}_{k_1,k_2}(x)x^m w_1(x)\,\ud x=
&\frac{1}{2\pi c_{k_1,k_2}}\int_{-\infty}^{\infty} x^m
\left(L_{t_1}^{k_1}L_{t_2}^{k_2} f_k\right)(x)e^{2t_1x}\,\ud x\\
=&\frac{(-1)^{k_1}}{2\pi c_{k_1,k_2}}\int_{-\infty}^{\infty} L_0^{k_1} (x^m)
\left(L_{t_2}^{k_2} f_k\right)(x)e^{2t_1x}\,\ud x=0,
\end{align*}
since $L_0$ acting on non-zero polynomials decreases their degree by one. By
Lemma \ref{commutation}, the same argument applies in checking the
orthogonality relations with respect to $w_2$.
\end{proof}

\subsection{Proof of Theorem~\ref{theorem:recurrenceMMPP}}
We will need explicit expressions for the first non-vanishing moments, defined by (\ref{def:h}). These
are readily calculated using Proposition \ref{representation}.
\begin{proposition} \label{moment} The first non-vanishing moments $h_{k_1,k_2}^{(j)}$ of the
multiple Meixner-Pollaczek polynomial  $P_{k_1,k_2}$ are given by
\begin{equation}\label{moment1}
h_{k_1,k_2}^{(1)}=\frac{\Gamma(2\lambda+k)k_1 !(\sin(t_1-t_2))^{k_2}}
{2^{2\lambda +k+k_1}(\cos t_1)^{k+k_1+2\lambda} (\cos t_2)^{k_2}}
\end{equation}
and
\begin{equation}\label{moment2}
h_{k_1,k_2}^{(2)}=\frac{\Gamma(2\lambda+k)k_2 !(\sin(t_2-t_1))^{k_1}}
{2^{2\lambda+k+k_2}(\cos t_2)^{k+k_2+2\lambda} (\cos t_1)^{k_1}}.
\end{equation}
\end{proposition}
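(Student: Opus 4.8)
The plan is to compute the first non-vanishing moments $h_{k_1,k_2}^{(j)}$ directly from the Rodrigues formula in Proposition~\ref{representation}. By definition \eqref{def:h} and the symmetry between the two indices, it suffices to establish \eqref{moment1}; the expression \eqref{moment2} then follows by interchanging $k_1\leftrightarrow k_2$ and $t_1\leftrightarrow t_2$. So I focus on
\[
h_{k_1,k_2}^{(1)}=\int_{-\infty}^{\infty} x^{k_1}P_{k_1,k_2}(x)\,w_1(x)\,\ud x.
\]
Using Proposition~\ref{representation} to rewrite $P_{k_1,k_2}(x)\left|\Gamma(\lambda+ix)\right|^2$ in terms of $\left(L_{t_1}^{k_1}L_{t_2}^{k_2} f_k\right)(x)$, where $f_k(x)=\left|\Gamma(\lambda+k/2+ix)\right|^2$, and recalling $w_1(x)=\frac{1}{2\pi}e^{2t_1 x}\left|\Gamma(\lambda+ix)\right|^2$, I get
\[
h_{k_1,k_2}^{(1)}=\frac{1}{2\pi c_{k_1,k_2}}\int_{-\infty}^{\infty} x^{k_1}\left(L_{t_1}^{k_1}L_{t_2}^{k_2} f_k\right)(x)\,e^{2t_1 x}\,\ud x.
\]

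The key step is to integrate by parts using Lemma~\ref{PI} exactly $k_1$ times to move the $L_{t_1}^{k_1}$ off of $f_k$ and onto the monomial $x^{k_1}$. Each application turns one $L_{t_1}$ into $-L_0$, producing a factor $(-1)^{k_1}$, so the integral becomes
\[
\frac{(-1)^{k_1}}{2\pi c_{k_1,k_2}}\int_{-\infty}^{\infty}\left(L_0^{k_1}(x^{k_1})\right)\left(L_{t_2}^{k_2} f_k\right)(x)\,e^{2t_1 x}\,\ud x.
\]
Now the crucial observation is that $L_0$ lowers polynomial degree by one while extracting a constant: since the leading coefficient of $L_0$ applied to a monic degree-$m$ polynomial is $-2i\cos 0=-2i$ by \eqref{Lofpolynomial}, iterating gives $L_0^{k_1}(x^{k_1})=(-2i)^{k_1}k_1!$, a constant. (This mirrors the degree-lowering argument already used at the end of the proof of Proposition~\ref{representation}.) This collapses the integral to a pure constant times $\int_{-\infty}^{\infty}\left(L_{t_2}^{k_2} f_k\right)(x)\,e^{2t_1 x}\,\ud x$.

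What remains is to evaluate this last integral explicitly. The plan here is to peel off the remaining $k_2$ operators $L_{t_2}$ as well, again by Lemma~\ref{PI} (with $t=t_1$), transferring them onto the constant function $1$; but $L_0$ annihilates constants, so one must instead argue more carefully using that $L_{t_2}$ and the exponential $e^{2t_1 x}$ interact through a \emph{nonzero} shift factor. Concretely, integration by parts with $t=t_1$ converts $L_{t_2}$ acting on $f_k$ into the operator $\widetilde L$ on the test function with shifts weighted by $e^{\pm i(t_1-t_2)}$, producing multiplicative factors involving $\sin(t_1-t_2)$; this is the source of the $(\sin(t_1-t_2))^{k_2}$ in \eqref{moment1}. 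The base case integral $\int_{-\infty}^{\infty}\left|\Gamma(\lambda+k/2+ix)\right|^2 e^{2t_1 x}\,\ud x$ is a known Mellin--Barnes/beta-type integral evaluating to $2\pi\,\Gamma(2\lambda+k)/(2\cos t_1)^{2\lambda+k}$ (this is the normalization already implicit in \eqref{orthogonality}). Assembling the factor $(-2i)^{k_1}k_1!$, the $(\sin(t_1-t_2))^{k_2}$ factors, the evaluated base integral, and dividing by $2\pi c_{k_1,k_2}=2\pi(-2i)^{k}(\cos t_1)^{k_1}(\cos t_2)^{k_2}$ should yield precisely \eqref{moment1}.

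The main obstacle I anticipate is the careful bookkeeping in the second integration-by-parts stage: tracking how each $L_{t_2}$, when integrated against the weight $e^{2t_1 x}$ rather than $e^{2t_2 x}$, generates the mismatch factor $\sin(t_1-t_2)$ rather than vanishing (as it would on its natural weight) — and keeping all powers of $2$, $\cos t_j$, and $i$ consistent so that the spurious factors cancel correctly against $c_{k_1,k_2}$. Verifying the decay hypothesis \eqref{decay} of Lemma~\ref{PI} at each step via Stirling's formula, as in Proposition~\ref{representation}, is routine but must be checked to justify the contour shifts.
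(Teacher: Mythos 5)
Your overall strategy is exactly the paper's: start from the Rodrigues formula, apply Lemma~\ref{PI} $k_1$ times to move $L_{t_1}^{k_1}$ onto $x^{k_1}$, reduce $L_0^{k_1}(x^{k_1})$ to a constant, then peel off the $k_2$ operators $L_{t_2}$ against the ``mismatched'' weight $e^{2t_1x}=e^{2(t_1-t_2)x}e^{2t_2x}$ using $L_0\bigl(e^{2(t_1-t_2)\,\cdot}\bigr)(x)=2i\sin(t_1-t_2)\,e^{2(t_1-t_2)x}$, and finally evaluate the base integral via \eqref{orthogonality} with $\lambda$ replaced by $\lambda+k/2$. All of that is right and is what the paper does.

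However, there is a concrete error in your evaluation of the constant $L_0^{k_1}(x^{k_1})$. You invoke \eqref{Lofpolynomial} to claim the leading coefficient picks up a factor $-2i\cos 0=-2i$ at each step, but \eqref{Lofpolynomial} describes $L_t$ acting on $R_m f_1$, i.e.\ a polynomial \emph{times the shifted gamma weight}, where the gamma recursion $\Gamma(z+1)=z\Gamma(z)$ makes the degree go \emph{up} and contributes the factor $-2i\cos t$. Acting on a bare monomial, $L_0(x^m)=(x+i/2)^m-(x-i/2)^m= i\,m\,x^{m-1}+\mathcal O(x^{m-2})$, so the degree goes \emph{down} and the relevant factor is $im$; iterating gives $L_0^{k_1}(x^{k_1})=i^{k_1}k_1!$, not $(-2i)^{k_1}k_1!$. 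With your constant the assembled expression comes out as $(-2)^{k_1}$ times the right-hand side of \eqref{moment1} (after dividing by $c_{k_1,k_2}=(-2i)^{k}(\cos t_1)^{k_1}(\cos t_2)^{k_2}$), so the final identity would not check out as claimed; with $i^{k_1}k_1!$ the powers of $2$, $i$ and the signs cancel to give exactly \eqref{moment1}. This is a local, fixable slip rather than a structural flaw, but since the whole content of the proposition is the exact constant, it needs to be corrected.
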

\begin{proof}
Consider the case $j=1$; by Lemma \ref{commutation} the $j=2$ case is
completely analogous. Reasoning as in the proof of the orthogonality relations,
and noting that $L_0( x^k)$ is a polynomial of degree $k-1$ with leading
coefficient $i k$, and  that $L_0 (e^{xt})=2i e^{2xt}\sin t$, we find that
\begin{align*}
h_{k_1,k_2}^{(1)} =&\frac{(-1)^{k_1}}{2\pi c_{k_1,k_2}}\int_{-\infty}^{\infty}
L_0^{k_1} (x^{k_1}) e^{2t_1 x}\left(L_{t_2}^{k_2} f_k\right)(x)\,\ud x\\
=&\frac{(-1)^{k}i^{k_1}k_1 !}{2\pi c_{k_1,k_2}}\int_{-\infty}^{\infty}
\left(2i\sin(t_1-t_2)\right)^{k_2} e^{2x(t_1-t_2)}  e^{2t_2 x} f_k(x)\,\ud x\\
=&\frac{k_1 ! \Gamma(2\lambda+k)(\sin(t_1-t_2))^{k_2}} {2^{2\lambda+k+k_1}(\cos
t_1)^{k+k_1+2\lambda} (\cos t_2)^{k_2}}.
\end{align*}
Here we made use of the orthogonality relation \eqref{orthogonality} (with
$m=n=0$) to compute the integral.
\end{proof}

\begin{proposition}\label{leadingcoeffprop}
The sub-leading coefficient $\gamma_{k_1,k_2}$ of $P_{k_1,k_2}(x)$ is given by
\begin{equation}\label{leadingcoeff}
\gamma_{k_1,k_2}=-\frac{(2\lambda+k-1)}{2}(k_1\tan t_1+k_2\tan t_2).
\end{equation}
\end{proposition}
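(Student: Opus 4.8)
The plan is to extract $\gamma_{k_1,k_2}$ from the Rodrigues formula of Proposition~\ref{representation} by refining the degree-counting argument in its proof so as to track also the \emph{second} coefficient of the polynomial factor. Recall from that proof that, writing $f_m(x)=|\Gamma(\lambda+m/2+ix)|^2$ and starting from $f_k$, each application of $L_t$ lowers the Gamma-index by one and raises the polynomial degree by one, so after $s$ applications one has a factorization $R^{(s)}(x)f_{k-s}(x)$ with $\deg R^{(s)}=s$. The explicit transition, obtained exactly as in \eqref{recursionlambda} using $\Gamma(z+1)=z\Gamma(z)$, reads
\begin{equation*}
R^{(s+1)}(x)=e^{it}\bigl(\mu_s-ix\bigr)R^{(s)}(x+i/2)-e^{-it}\bigl(\mu_s+ix\bigr)R^{(s)}(x-i/2),\qquad \mu_s:=\lambda+\tfrac{k-s-1}{2},
\end{equation*}
where $t=t_2$ for the first $k_2$ steps and $t=t_1$ for the remaining $k_1$ steps. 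I would then insert $R^{(s)}(x)=a_s x^s+\beta_s x^{s-1}+\mathcal{O}(x^{s-2})$ and read off the top two coefficients of $R^{(s+1)}$.

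For the leading coefficient one recovers $a_{s+1}=-2i(\cos t)a_s$, i.e.\ \eqref{Lofpolynomial}, whence $a_k=c_{k_1,k_2}$ as in Proposition~\ref{representation}. The point of the computation is the $x^s$-coefficient, which after expanding $(x\pm i/2)^s$ to two orders becomes
\begin{equation*}
\beta_{s+1}=2i\sin t\,\bigl(\mu_s+\tfrac{s}{2}\bigr)a_s-2i\cos t\,\beta_s.
\end{equation*}
The key simplification—and what makes the whole argument collapse—is that $\mu_s+\tfrac s2=\lambda+\tfrac{k-1}{2}$ is \emph{independent of $s$}; write $\Lambda:=\lambda+\tfrac{k-1}{2}=\tfrac{2\lambda+k-1}{2}$.

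With this I would pass to the ratio $r_s:=\beta_s/a_s$, the sub-leading coefficient of the monic normalization of $R^{(s)}$. Dividing the two recursions gives the telescoping relation
\begin{equation*}
r_{s+1}=\frac{2i\Lambda\sin t\,a_s-2i\cos t\,\beta_s}{-2i\cos t\,a_s}=r_s-\Lambda\tan t,
\end{equation*}
with $t$ the parameter used at that step. Since $r_0=0$ and the parameter $t_2$ is used $k_2$ times and $t_1$ is used $k_1$ times, summation yields $r_k=-\Lambda\,(k_1\tan t_1+k_2\tan t_2)$. Finally, since $R^{(k)}=c_{k_1,k_2}\tilde{P}_{k_1,k_2}$ with $\tilde{P}_{k_1,k_2}=P_{k_1,k_2}$ monic, the sub-leading coefficient of $P_{k_1,k_2}$ equals $r_k$, which is precisely \eqref{leadingcoeff}.

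I anticipate the only delicate point to be the bookkeeping of the $x^s$-coefficient through the shift operators $R^{(s)}(x\pm i/2)$; once the cancellation $\mu_s+s/2=\Lambda$ is noticed the remaining steps are automatic, and commutativity of $L_{t_1}$ and $L_{t_2}$ (Lemma~\ref{commutation}) guarantees that the order of applying the operators—hence the order of summing the increments $-\Lambda\tan t_j$—is immaterial.
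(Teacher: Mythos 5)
Your argument is correct, and I checked the key computation: with $R^{(s)}(x)=a_sx^s+\beta_sx^{s-1}+\mathcal O(x^{s-2})$ the transition indeed gives $a_{s+1}=-2i\cos t\,a_s$ and $\beta_{s+1}=2i\sin t\,(\mu_s+\tfrac s2)a_s-2i\cos t\,\beta_s$, the quantity $\mu_s+\tfrac s2=\lambda+\tfrac{k-1}{2}=\Lambda$ really is independent of $s$, and the telescoping $r_{s+1}=r_s-\Lambda\tan t$ with $r_0=0$ yields exactly \eqref{leadingcoeff}; the identification of the resulting monic polynomial with $P_{k_1,k_2}$ is already supplied by Proposition~\ref{representation}. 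However, this is a genuinely different route from the paper's. The paper observes that $P_{k_1+1,k_2}-P_{k_1,k_2+1}$ is a constant multiple of $P_{k_1,k_2}$, integrates against $x^{k_1}w_1$ to express that constant as a ratio of the moments from Proposition~\ref{moment}, obtaining the difference relation \eqref{gammadifference}, and anchors the resulting summation at $k_2=0$ using the classical Meixner--Pollaczek recurrence \eqref{recurrenceorthogonal}. Your proof instead refines the degree-counting induction inside the proof of Proposition~\ref{representation} to track the second coefficient directly, so it is self-contained: it needs neither the explicit moments nor the classical recurrence, and the whole answer drops out of one telescoping sum once the cancellation $\mu_s+\tfrac s2=\Lambda$ is noticed. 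What the paper's route buys in exchange is economy within the larger argument, since Proposition~\ref{moment} is needed anyway for the recurrence coefficients in Theorem~\ref{theorem:recurrenceMMPP}, so the sub-leading coefficient comes almost for free once the moments are in hand. Either proof is acceptable; yours is arguably the cleaner derivation of this particular formula.
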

\begin{proof}
Let $k=k_1+k_2$ be fixed. The polynomial
$P_{k_1+1,k_2}-P_{k_1,k_2+1}$ is clearly of degree $k$ and satisfies
$k_j$ orthogonality conditions with respect to $w_j$, for $j=1,2$. It is thus a
multiple of  $P_{k_1,k_2}$, and reading off the leading coefficient gives
$P_{k_1+1,k_2}-P_{k_1,k_2+1}=(\gamma_{k_1+1,k_2}-\gamma_{k_1,k_2+1})P_{k_1,k_2}$.
Multiplying this relation by $x^{k_1}$ and integrating with respect to $w_1$
gives the equation
\begin{equation*}
0-h_{k_1,k_2+1}^{(1)}=(\gamma_{k_1+1,k_2}-\gamma_{k_1,k_2+1})h_{k_1,k_2}^{(1)},
\end{equation*}
which by Proposition \ref{moment} can be written
\begin{equation}\label{gammadifference}
\gamma_{k_1+1,k_2}-\gamma_{k_1,k_2+1}=\frac{(2\lambda+k)}{2}(\tan t_2-\tan
t_1).
\end{equation}
Identifying coefficients in the recurrence relation
(\ref{recurrenceorthogonal}) for the ordinary monic orthogonal
Meixner-Pollaczek polynomials $P_{k_1,0}$ with respect to $w_1$, gives
$\gamma_{k_1+1,0}=\gamma_{k_1,0}+a_{k_1}$ and so
\[ \gamma_{k_1+1,0}=-\sum_{j=0}^{k_1}a_{j}=-\frac{(k_1+1)}{2}(2\lambda+k_1)\tan t_1. \]
Then repeated application of (\ref{gammadifference}) leads to the claim,
for any $k_1+k_2=k$.
\end{proof}

Finally we are ready for the proof of Theorem \ref{theorem:recurrenceMMPP}.

\begin{proof}[Proof of Theorem \ref{theorem:recurrenceMMPP}]
With the explicit expressions for $\gamma_{k_1,k_2}$ and $h^{(j)}_{k_1,k_2}$
given by Propositions \ref{moment} and \ref{leadingcoeffprop}, this follows
from the general recurrence relation (\ref{recurrence1}).
\end{proof}

\section{
Proof of Theorem~\ref{theorem:locallyToepeig}}
\label{section:zeroasymptotics}


In this section we will prove the general Theorem~\ref{theorem:locallyToepeig}
on the asymptotic zero distribution of a sequence of polynomials $Q_{k,n}$
generated by a recurrence relation
\eqref{reccoef:Jacobi}--\eqref{reccoef:Toeplitz}. The main idea of the proof
follows the scalar case $r=1$ by Kuijlaars-Rom\'an \cite[Theorem 1.2]{Roman},
see also \cite{Coussement,KVA}. But we will need some nontrivial modifications
due to the fact that $r$ may be greater than $1$.

The main tool in the proof is the following result on ratio asymptotics for the
$Q_{k,n}$, compare with \cite[Lemma 2.2]{Roman}.

\begin{lemma}\label{lemma:Roman} (Ratio asymptotics) Under the assumptions of
Theorem~\ref{theorem:locallyToepeig}, we have that for each $s>0$ there exists
$R>0$ so that all zeros of $Q_{k,n}$ belong to $[-R,R]$ whenever $k\leq
(s+1)n$. Moreover,
\begin{equation}\label{ratioasym} \lim_{k/n\to s}
\frac{Q_{k,n}(x)}{Q_{k+r,n} (x)} = z_1(x,s),\end{equation}
uniformly on compact
subsets of $\cee \setminus [-R,R]$, where $z_1(x,s)$ is the solution to the algebraic equation (\ref{fzlambda0}) with smallest modulus.
\end{lemma}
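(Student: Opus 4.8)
The plan is to prove the ratio asymptotics \eqref{ratioasym} by exploiting the recurrence relation \eqref{reccoef:Jacobi} together with the real-and-interlacing hypothesis, following the strategy of Kuijlaars--Rom\'an \cite{Roman} but accounting for the block size $r>1$. First I would establish the uniform boundedness of zeros: since the $Q_{k,n}$ have real interlacing zeros, the zeros of $Q_{k,n}$ are eigenvalues of a truncated Hessenberg matrix built from the banded entries of $J_n$, and the assumed limits \eqref{reccoef:Toeplitz} together with the finite bandwidth give a uniform bound on the matrix entries for $k\leq(s+1)n$. A Gershgorin-type estimate then confines all eigenvalues, hence all zeros, to a fixed interval $[-R,R]$; this is the easy part. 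Fixing $x\in\cee\setminus[-R,R]$, no $Q_{k,n}(x)$ vanishes, so the ratios $y_{k,n}(x):=Q_{k,n}(x)/Q_{k+r,n}(x)$ are well defined and bounded.

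Next I would set up a vector-valued recurrence for the ratios. Because the matrix $J_n$ has period-$r$ block structure, it is natural to group the polynomials into blocks of length $r$ and write the scalar recurrence \eqref{reccoef:Jacobi} as a first-order matrix recurrence relating consecutive blocks of the vector $(Q_{k,n}(x),Q_{k+1,n}(x),\ldots)$. As $k/n\to s$, the coefficient blocks converge to the limiting matrices $A_s^{(j)}$, so the associated transfer operation converges to a constant-coefficient block recurrence whose characteristic equation is precisely $f_s(z,x)=\det(A_s(z)-xI_r)=0$ from \eqref{fzlambda0}. The normalization in \eqref{A0minus1:structure} guarantees that $z$ plays the role of the ratio of consecutive blocks. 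I would then argue that any limit point of the ratio sequence $y_{k,n}(x)$ must be a root $z_j(x,s)$ of $f_s(z,x)=0$, since the ratios asymptotically satisfy the limiting recurrence.

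The heart of the argument, and the main obstacle, is identifying which root is selected: one must show the ratio converges to the root $z_1(x,s)$ of \emph{smallest} modulus, and that the convergence is uniform on compact subsets of $\cee\setminus[-R,R]$. In the scalar case this selection comes from the growth rate $Q_{k,n}(x)\sim\prod z_1^{-1}$ dictated by the monic normalization and the location of zeros inside $[-R,R]$; here I expect the same mechanism, but the block structure forces care. The key input is part (a) of Lemma~\ref{lemma:mu0prob}, namely $z_1(x,s)=x^{-r}+\mathcal{O}(x^{-r-1})$ as $x\to\infty$, which matches the leading behavior $Q_{k,n}(x)/Q_{k+r,n}(x)\sim x^{-r}$ forced by monicity and degree; this pins down $z_1$ as the correct branch for large $|x|$, and analytic continuation plus the ordering \eqref{ordering:roots0} should propagate the identification throughout $\cee\setminus[-R,R]$. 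To make the selection rigorous I would use the interlacing hypothesis to control the sign and size of $Q_{k,n}(x)$ for real $x$ outside $[-R,R]$, showing the dominant root cannot be selected because that would force spurious zeros, and then upgrade pointwise to uniform convergence via a normal-families/Montel argument on the bounded holomorphic ratios. The delicate point is handling the possible coincidence of moduli $|z_1|=|z_2|$ on the exceptional set $\Gamma_1(s)$, but since we work on compact subsets of $\cee\setminus[-R,R]$ and $\Gamma_1(s)\subset\er$ by \eqref{assumption:realline}, the two smallest roots stay strictly separated there, ensuring a clean limit.
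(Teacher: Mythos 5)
Your overall strategy---bounded zeros via the truncated Hessenberg matrix, Montel/normal families for the ratios, passage to a limiting block recurrence, and branch selection via the $x^{-r}$ behaviour at infinity---matches the paper's, and the first and last ingredients are sound. The genuine gap is in the middle step, which you dispose of in one sentence: ``any limit point of the ratio sequence must be a root of $f_s(z,x)=0$, since the ratios asymptotically satisfy the limiting recurrence.'' For $r=1$ this is indeed immediate (divide the three-term recurrence by $Q_{k+1,n}$ and pass to the limit), but for $r>1$ it is precisely the hard part of the lemma. The limiting block relation \eqref{recursion:mxv2} involves the vectors $\widetilde{\mathbf{Q}}^{(j)}$, each of which is a telescoping product of $r$-step ratios taken at \emph{different} offsets $m\in\{0,\dots,r-1\}$ within the blocks and at different block positions. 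A priori these shifted ratios may have different subsequential limits, so the relation cannot be collapsed to $(A_s(z)-xI_r)\mathbf{v}=0$ for a single scalar $z$, and you cannot conclude that the limit of $Q_{rk,n}/Q_{rk+r,n}$ satisfies $\det(A_s(z)-xI_r)=0$.

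The paper in fact never establishes that the limit exactly satisfies the algebraic equation. Instead it runs a bootstrap induction: assuming every shifted $r$-step ratio limit equals $z_1(x,s)(1+\mathcal{O}(x^{-l}))$ as $x\to\infty$ (true for $l=1$ by monicity and Lemma~\ref{lemma:mu0prob}(a)), it substitutes these into the block relation, exploits the special structure of $A_s^{(0)}$ and $A_s^{(-1)}$ in \eqref{A0minus1:structure} together with the fact that $z_1$ is an exact root of \eqref{fzlambda0} to cancel the leading determinant, and thereby upgrades the error to $\mathcal{O}(x^{-l-r})$. Agreement with $z_1$ to all orders at infinity then identifies every subsequential limit near $\infty$, and analytic continuation plus normality finishes the proof. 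Your plan needs either this bootstrap or some other device forcing all the shifted ratios to share one limit; as written, your branch-selection discussion (which is essentially correct) rests on a foundation that has not been laid. A further small point: for the continuation step one needs $\Gamma_1(s)\subset[-R,R]$, not merely $\Gamma_1(s)\subset\er$, so that $z_1(\cdot,s)$ is single-valued and analytic on all of $\cee\setminus[-R,R]$.
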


\begin{proof}
The claim about the boundedness of the zeros of $Q_{k,n}$ follows in a rather
standard way from the assumptions, see e.g.\ \cite[Proof of Lemma 2.2]{Roman}.
Now we turn to the claim \eqref{ratioasym}. We consider the family of functions
\begin{equation}\label{normalfamily}
\mathcal H = \left\{\frac{Q_{k,n}(x)}{Q_{k+1,n}(x)}\mid  k,n\in\enn,k\leq
(s+1)n\right\}.
\end{equation}

From the assumption that the zeros of $(Q_{k,n})_k$ are real and interlacing,
it follows that $\mathcal H$ is a normal family (in the sense of Montel) on
$\bar{\cee} \setminus [-R,R]$, see e.g.\ \cite[Proof of Lemma 2.2]{Roman}.

Using induction on $l$, we will show the following.\smallskip

\textbf{Claim}: For any $m\in\{0,\ldots,r-1\}$ and for each $l\geq 1$, the
following holds. If $(k_i)_i$, $(n_i)_i$ are sequences of non-negative integers
with $k_i,n_i\to\infty$, $rk_i/n_i\to s$ as $i\to\infty$, so that
$$f(x) := \lim_{i\to\infty} \frac{Q_{rk_i+m,n_i}(x)}{Q_{rk_i+m+r,n_i} (x)}$$ exists for $|x| > R$, then
$$f(x) =z_1(x,s) (1+\mathcal{O}(x^{-l}))$$ as $x\to\infty$. \smallskip

Let us prove this claim. We have $z_1(x,s) = x^{-r}(1+ O(1/x))$ as $x\to\infty$
(Lemma~\ref{lemma:mu0prob}(a)), and so it is clear that the claim holds for $l
= 1$.

Now assume that the claim holds for $l\geq 1$. We will prove that it also holds
for $l+r$. We will prove this when $m=0$; the proof for the other values of $m$
can be given in a similar way. Letting $(k_i)_i$, $(n_i)_i$ be as in the claim,
our goal will be to prove that the function $\epsilon(x)$ defined by
\begin{equation}\label{def:eps}
\lim_{i\to\infty} \frac{Q_{rk_i,n_i}(x)}{Q_{rk_i+r,n_i} (x)}=z_1(x,s)
(1+\epsilon(x))^{-1},
\end{equation}
satisfies $\epsilon(x)=\mathcal{O}(x^{-l-r})$ for $x\to\infty$.

Let us prove this. Since $rk_i/n_i\to s$ as $i\to\infty$, we may assume that
$$rk_i \leq (s + 1)n_i-(r-1)$$ for every $i$. For $j = -(r-1),\ldots,r\beta$, we then
have that
$$ \frac{Q_{rk_i-j,n_i}}{Q_{rk_i+1-j,n_i}}$$ belongs to the family $\mathcal H$.
Since $\mathcal H$ is a normal family, we
may assume, by passing to a subsequence if necessary, that
$$ f^{(j)}(x) = \lim_{i\to\infty} \frac{Q_{rk_i-j,n_i}(x)}{Q_{rk_i+1-j,n_i}(x)}
$$
exists for $x\in\bar{\cee}\setminus [-R,R]$ and $j=-(r-1),\ldots,r\beta$.

Taking the $k_i$th block row in \eqref{reccoef:Jacobi} with $n=n_i$ and using
\eqref{J:part}, we obtain the matrix-vector relation
\begin{equation}\label{recursion:mxv} x\underset{r\times 1}{\underbrace{\mathbf
Q_{k_i,n_i}(x)}} = \underset{r\times r(\beta+2)}{\underbrace{\begin{pmatrix}
A_{k_i,n_i}^{(\beta)} & \cdots & A_{k_i,n_i}^{(-1)}
\end{pmatrix}}}
\underset{r(\beta+2)\times 1}{\underbrace{\begin{pmatrix}\mathbf Q_{k_i-\beta,n_i}(x)\\ \vdots \\
\mathbf Q_{k_i+1,n_i}(x)\end{pmatrix}}},
\end{equation}
where we denote with $\mathbf Q_{k,n}(x)$ the $r\times 1$ column vector
$$ \mathbf Q_{k,n}(x) := \begin{pmatrix}Q_{rk,n}(x)\\ \vdots \\
Q_{r(k+1)-1,n}(x)\end{pmatrix}.
$$
Dividing both sides of \eqref{recursion:mxv} by the scalar function
$Q_{rk_i,n_i}(x)$, and taking the limit $i\to\infty$, we find
\begin{equation}\label{recursion:mxv2} x\widetilde{\mathbf
Q}^{(0)}(x) = \begin{pmatrix} A_{s}^{(\beta)} & \cdots & A_{s}^{(-1)}
\end{pmatrix}
\begin{pmatrix}\widetilde{\mathbf Q}^{(\beta)}(x)\\ \vdots \\
\widetilde{\mathbf Q}^{(-1)}(x)\end{pmatrix},
\end{equation}
where we used \eqref{reccoef:Toeplitz} and where we set
\begin{equation}\label{def:Qtil} \widetilde{\mathbf{Q}}^{(j)}(x) :=
\lim_{i\to\infty} \frac{\mathbf Q_{k_i-j,n_i}(x)}{Q_{rk_i,n_i}(x)},\qquad
j=-1,\ldots,\beta,
\end{equation}
which exists entrywise due to our assumptions.
It will be convenient to rewrite $\widetilde{\mathbf{Q}}^{(j)}(x)$,
$j=0,\ldots,\beta$, as a telescoping product:
\begin{align}\label{telescoping}  \widetilde{\mathbf{Q}}^{(j)}(x) = & \lim_{i\to\infty}
\frac{\mathbf Q_{k_i-j,n_i}(x)}{Q_{rk_i,n_i}(x)} \nonumber \\
= &\lim_{i\to\infty}
\frac{\mathbf Q_{k_i-j,n_i}(x)}{\mathbf
Q_{k_i-j+1,n_i}(x)}\lim_{i\to\infty}\frac{\mathbf Q_{k_i-j+1,n_i}(x)}{\mathbf
Q_{k_i-j+2,n_i}(x)}\times \cdots
 \times\lim_{i\to\infty}\frac{\mathbf Q_{k_i,n_i}(x)}{Q_{rk_i,n_i}(x)},
\end{align}
where by abuse of notation we write $\frac{\mathbf a}{\mathbf b}$ and $\mathbf
a \mathbf b$ for two vectors $\mathbf a,\mathbf b$ of length $r$ to denote
their entrywise quotient and product respectively.
Each of the limits in \eqref{telescoping} exists again entrywise due to our
assumptions. Applying the induction hypothesis to each of the limits in the
telescoping product \eqref{telescoping}, we find that
\begin{equation}\label{def:Qtil2} \widetilde{\mathbf{Q}}^{(j)}(x) =
 z_1(x,s)^{j}(1+\mathcal O(x^{-l}))\widetilde{\mathbf{Q}}^{(0)}(x),\quad
x\to\infty,
\end{equation}
for any $j=0,\ldots,\beta$.

Next we turn to the term $A_s^{(-1)}\widetilde{\mathbf{Q}}^{(-1)}(x)$ in the
expansion of the right hand side of \eqref{recursion:mxv2}. Denoting $\mathbf
e_1:=(1,0,\ldots,0)^T$, we can write this as
\begin{align}\nonumber A_s^{(-1)}\widetilde{\mathbf{Q}}^{(-1)}(x) &= A_s^{(-1)} \lim_{i\to\infty}
\frac{\mathbf Q_{k_i+1,n_i}(x)}{Q_{rk_i,n_i}(x)}\\
\nonumber &=
\left(\lim_{i\to\infty}\frac{Q_{r(k_i+1),n_i}(x)}{Q_{rk_i,n_i}(x)}\right)
A_s^{(-1)}\mathbf e_1
\\ \nonumber &= z_1(x,s)^{-1}(1+\epsilon(x))A_s^{(-1)}\mathbf e_1
\\ \label{def:Qtil3} &= z_1(x,s)^{-1}(1+\epsilon(x))A_s^{(-1)}\widetilde{\mathbf{Q}}^{(0)}(x),\end{align}
where the first step follows by definition, the second step follows since the
matrix $A_{s}^{(-1)}$ is zero except for its bottom left entry,
cf.~\eqref{A0minus1:structure}, the third step is a consequence of
\eqref{def:eps}, and the last step uses that $\widetilde{\mathbf{Q}}^{(0)}(x)$
in \eqref{def:Qtil} has its first entry equal to $1$.

Inserting \eqref{def:Qtil2}--\eqref{def:Qtil3} in \eqref{recursion:mxv2} yields
the matrix-vector relation
\begin{equation}\label{nullspace}
x \widetilde{\mathbf{Q}}^{(0)}(x) = B(x,s)\widetilde{\mathbf{Q}}^{(0)}(x)
\end{equation}
where the matrix $B(x,s)$ satisfies
\begin{multline}\label{Bxs}
B(x,s)=\left(A_{s}^{(\beta)}z_1(x,s)^{\beta}(1+\mathcal O(x^{-l})) + \cdots
+A_{s}^{(1)}z_1(x,s)(1+\mathcal O(x^{-l})) + A_{s}^{(0)}\right.\\ \left. +
A_{s}^{(-1)}z_1(x,s)^{-1}(1+\epsilon(x))\right),\qquad x\to\infty.
\end{multline}
We can rewrite \eqref{Bxs} as
\begin{multline}\label{nullspacehip}
B(x,s) = \left(A_{s}^{(\beta)}z_1(x,s)^{\beta} + \cdots +A_{s}^{(1)}z_1(x,s) +
A_{s}^{(0)}+ A_{s}^{(-1)}z_1(x,s)^{-1}\right) \\ +
A_{s}^{(-1)}z_1(x,s)^{-1}\epsilon(x) +\mathcal O(x^{-l-r}),\qquad x\to\infty,
\end{multline}
by using that $z_1(x,s)=\mathcal{O}(x^{-r})$ as $x\to\infty$
(Lemma~\ref{lemma:mu0prob}(a)).

Relation \eqref{nullspace} clearly implies that
\begin{equation}\label{nullspace2}
\det \left(B(x,s)-x I_r\right) = 0.
\end{equation}
Expanding the determinant \eqref{nullspace2} for $|x|$ large, with the help of
\eqref{nullspacehip}, we obtain
\begin{multline}\label{det:exp}
\det\left(A_{s}^{(\beta)}z_1(x,s)^{\beta} + \cdots +A_{s}^{(1)}z_1(x,s) +
A_{s}^{(0)}+ A_{s}^{(-1)}z_1(x,s)^{-1}-xI_r\right)\\
+(-1)^{r+1}\epsilon(x)x^r(1+\mathcal O(1/x))+\mathcal O(x^{-l}) = 0,\qquad
x\to\infty.
\end{multline}
Here the terms in the second line of \eqref{det:exp} can be justified by using
the special structure of $A_{s}^{(0)}$ and $A_{s}^{(-1)}$ in
\eqref{A0minus1:structure}, and using again the fact that
$z_1(x,s)=x^{-r}(1+\mathcal O(1/x))$ as $x\to\infty$.

Now the determinant in the first line of \eqref{det:exp} vanishes identically,
since by definition $z_1=z_1(x,s)$ is a root of \eqref{fzlambda0},
cf.~\eqref{symbol0}. So \eqref{det:exp} reduces to $$
(-1)^{r+1}\epsilon(x)x^r(1+\mathcal O(1/x))+\mathcal O(x^{-l}) = 0,\qquad
x\to\infty,
$$ which implies in turn that $\epsilon(x) = \mathcal{O}(x^{-l-r})$ as
$x\to\infty$. This proves the induction step, thereby establishing the claim.

Having proved the claim, the proof of Lemma~\ref{lemma:Roman} can now be
finished from a standard normal family argument as in \cite{Roman}.
\end{proof}

With Lemma~\ref{lemma:Roman} in place, the proof of
Theorem~\ref{theorem:locallyToepeig} can be finished as in \cite[Proof of
Theorem 1.2]{Roman}. $\bol$

\begin{remark}
The above proof also shows that
$$\widetilde{\mathbf{Q}}^{(0)}(x) :=  \lim_{i\to\infty}
\frac{1}{Q_{rk_i,n_i}(x)}\begin{pmatrix}Q_{rk_i,n_i}(x)\\ \vdots \\
Q_{r(k_i+1)-1,n_i}(x)\end{pmatrix}$$ satisfies
\begin{equation*} x
\widetilde{\mathbf{Q}}^{(0)}(x) =
A_s(z_1(x))\widetilde{\mathbf{Q}}^{(0)}(x),\qquad x\in\cee\setminus [-R,R],
\end{equation*}
by virtue of \eqref{symbol0} and the fact that \eqref{nullspace}--\eqref{Bxs}
hold with $l$ arbitrarily large. So $\widetilde{\mathbf{Q}}^{(0)}(x)$ is a
vector with first component equal to $1$ which lies in the null space of the
matrix $A_s(z_1(x))-x I_r$. In fact, it can be shown that there is a unique
vector $\mathbf{v}(x)$ satisfying this condition, for all
$x\in\cee\setminus[-R,R]$, and hence we have
$$ \widetilde{\mathbf{Q}}^{(0)}(x) = \lim_{rk/n\to s}
\frac{1}{Q_{rk,n}(x)}\begin{pmatrix}Q_{rk,n}(x)\\ \vdots \\
Q_{r(k+1)-1,n}(x)\end{pmatrix} = \mathbf{v}(x),\qquad
x\in\cee\setminus[-R,R].$$
\end{remark}

\section{Proof of Theorem \ref{theorem:equilibriumproblem}}
\label{section:equilibrium}

\subsection{The sets $\Gamma_1(s)$ and $\Gamma_2(s)$}
Recall the functions $z_j =z_j(x,s)$,  defined as the solutions to the
algebraic equation (\ref{algebraicequation:P}) ordered by increasing modulus,
(\ref{roots}), and the
definitions 
(\ref{def:Gamma0}) and  (\ref{def:Gamma2}) of the sets
 $\Gamma_1(s)$ and
$\Gamma_2(s)$, whose structure we now describe.

\begin{proposition}\label{monotonoussupports}
$\Gamma_1(s)=[-c_1 s,c_1 s]$ and $\Gamma_2(s)=i\mathbb{R}\setminus
(-ic_2 s,ic_2 s)$, where
$c_1$ and $c_2$ are explicit constants given by (\ref{supportnu1}) and
(\ref{constraintactive}), respectively.
\end{proposition}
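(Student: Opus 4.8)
The plan is to reduce the whole question to the single algebraic curve \eqref{algebraicequation:P} and to exploit that, in the symmetric case $t_1=-t_2$, the coefficients $a_s,b_s$ of \eqref{limits:abc:symm} are real. First I would record a scaling. Writing $b=\tan t$ and substituting $x=s\xi$, $z=w/s^2$ in \eqref{algebraicequation:P}, the products $b_sz$, $a_s^2z$ and $x^2z$ are all independent of $s$, so the equation becomes its $s=1$ version in $(\xi,w)$, which is exactly \eqref{densityrel3}. Since multiplication of $z$ by the positive constant $s^{-2}$ preserves the modulus ordering \eqref{roots}, this gives $\Gamma_j(s)=s\,\Gamma_j(1)$, and it suffices to treat $s=1$.

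Next I would clear the denominator in \eqref{algebraicequation:P} to obtain a cubic equation in $z$ whose coefficients depend on $x$ only through $X:=x^2$, and affinely so: the elementary symmetric functions $e_1=z_1+z_2+z_3$ and $e_3=z_1z_2z_3$ are real constants independent of $X$ (with $e_1<0$ and $e_3<0$), while $e_2=z_1z_2+z_1z_3+z_2z_3$ is an affine function of $X$. Hence $X$ is real precisely when $e_2$ is real, so the real and imaginary $x$-axes correspond exactly to real $X$. The structural facts I would use are: (i) for real $X$ the cubic has real coefficients, so its non-real roots form a conjugate pair which \emph{automatically} has equal modulus; and (ii) such a pair occurs precisely where the discriminant $\operatorname{disc}_z$ is negative.

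I would then compute $\operatorname{disc}_z$ as a cubic polynomial in $X$. It has an obvious root at $X=0$ (at $x=0$ the equation degenerates to $(1+b_sz)^2(1+a_s^2z)=0$, with double root $z=-1/b_s$), and the remaining quadratic factor has roots $X=c_1^2$ and $X=-c_2^2$; matching this quadratic against \eqref{supportnu1}--\eqref{constraintactive} by Vieta's formulas identifies the constants. Reading off signs shows $\operatorname{disc}_z$ is negative on $(0,c_1^2)$ and $(-\infty,-c_2^2)$, vanishes at $0,c_1^2,-c_2^2$, and is positive on $(-c_2^2,0)$ and $(c_1^2,\infty)$. On the two negative-discriminant intervals I would determine, by continuity from the branch points together with the asymptotics $z_1=x^{-2}+\mathcal O(x^{-3})$ of Lemma~\ref{lemma:mu0prob}(a), whether the conjugate pair is the two smallest roots (so $x\in\Gamma_1$, giving $x^2\in[0,c_1^2]$, i.e.\ $[-c_1,c_1]$) or the two largest (so $x\in\Gamma_2$, giving $x^2\le-c_2^2$, i.e.\ imaginary $x$ with $|x|\ge c_2$). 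On the positive-discriminant intervals all three roots are real, and $e_1<0$, $e_3<0$ are incompatible with any relation $z_j=-z_k$, so no two moduli coincide. This gives the two claimed sets as the real parts of $\Gamma_1$ and $\Gamma_2$.

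The main obstacle is the reverse inclusion: ruling out \emph{non-real} $X$, i.e.\ showing the equal-modulus loci do not leave the real $X$-axis (equivalently $\Gamma_1\subset\mathbb R$, $\Gamma_2\subset i\mathbb R$). This is exactly where the symmetry $t_1=-t_2$ is essential. My plan is to use that $\Gamma_1\cup\Gamma_2$ is a finite union of analytic arcs that can terminate only at branch points (zeros of $\operatorname{disc}_z$) or at infinity, a structural input available from the block-Toeplitz theory of \cite{Delvaux,Duits}. All finite branch points are the real numbers $0,c_1^2,-c_2^2$; a local expansion $z=z_*\pm\sqrt{c\,(X-X_*)}+\cdots$ at each of them shows that, because the critical point $z_*$ and the coefficient $c$ are \emph{real} (a consequence of $a_s,b_s$ being real), the condition $|z_1|=|z_2|$ reduces to $\operatorname{Re}\sqrt{c(X-X_*)}=0$ or $\operatorname{Im}\sqrt{c(X-X_*)}=0$, forcing $X-X_*$ to be real. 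Hence every arc leaves each branch point along the real direction; combined with the behaviour at infinity, this confines the loci to $\mathbb R$ in the $X$-plane and yields the asserted equalities. The delicate part will be making the global arc structure rigorous, in particular excluding any additional off-axis arc (the abstract possibility flagged by a case analysis of $\operatorname{Im}e_2=0$), for which the explicit form of the symbol and the results of \cite{Delvaux} should be invoked.
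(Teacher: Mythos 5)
Your analysis over real $X=x^2$ --- the scaling to $s=1$, the discriminant of the cubic in $z$ viewed as a cubic in $X$ with roots $0$, $c_1^2$, $-c_2^2$, the resulting sign pattern, and the observation that $e_1<0$, $e_3<0$ excludes two distinct \emph{real} roots of equal modulus --- is correct and parallels the paper's study of the graph of $z\mapsto P(z,s)$ on the real line (your Vieta argument on the positive-discriminant intervals is in fact slightly cleaner than the paper's appeal to ``at most finitely many exceptional $y$'' combined with the no-isolated-points property of $\Gamma_1(s)$, $\Gamma_2(s)$). Deciding which negative-discriminant interval belongs to $\Gamma_1^2$ and which to $\Gamma_2^2$ by comparing, at each branch point, the modulus of the double root with that of the third root is also exactly what the paper does (via $z_1(s)+z_2(s)=-2/(b^2s^2)<0$).

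The genuine gap is precisely the step you flag as ``the main obstacle'' and then leave open: proving that $\Gamma_1(s)\cup\Gamma_2(s)\subset\mathbb{R}\cup i\mathbb{R}$, i.e.\ that the equal-modulus loci sit over real $X$. Your proposed route --- arcs terminate only at branch points or at infinity, all finite branch points are real, and a local expansion shows the arcs leave each branch point in a real direction --- does not close this: it cannot exclude closed loops of $\Gamma_1(s)$ or $\Gamma_2(s)$ avoiding every branch point, nor arcs ending at points of $\Gamma_1(s)\cap\Gamma_2(s)$ where three roots share a modulus; and the results of \cite{Delvaux} do not supply the inclusion either (the paper explicitly notes that without the symmetry $t_1=-t_2$ it may fail). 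The paper closes this with one short global argument that your proposal is missing. Writing $f_s(z,x)=x^2-P(z,s)$ with $P$ as in \eqref{def:P}, the numerator of $P(\cdot,s)$ has only \emph{negative real} zeros, so each factor $\left|4+re^{i\theta}c\right|^2=16+8rc\cos\theta+r^2c^2$ (with $c>0$) is strictly decreasing in $\theta$ on $(0,\pi)$; hence for fixed $r>0$ the even function $\theta\mapsto\left|P(re^{i\theta},s)\right|$ is strictly decreasing on $(0,\pi)$. Consequently, if $z_1\neq z_2$ have equal modulus and $P(z_1,s)=P(z_2,s)=y$ with $y$ not a branch value, then $z_2=\overline{z_1}$, whence $y=\overline{y}$ is real; the same monotonicity shows no three distinct roots can share a modulus, so $\Gamma_1^2(s)\cap\Gamma_2^2(s)$ can contain only branch points. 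With this inclusion in hand (branch points being handled by the no-isolated-points property from \cite{Delvaux,Widom1}), the rest of your outline goes through and yields the proposition.
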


\begin{proof}
By (\ref{algebraicequation:P}), specializing (\ref{fzlambda0}) to the present setting gives  $$f_s(z,x)=x^2-P(z,s),$$ where
\begin{equation}\label{def:P}
    P(z,s)=\frac{(4+ z s^2 (1+b^2 ))^2(4+z s^2 b^2 )}{64z}, \qquad \text{where } b = \tan t.
\end{equation}
 It is clear that $x\in \Gamma_j(s)$ if and only if $-x\in
\Gamma_j(s)$, so it will be convenient to consider for a moment the
sets
\begin{equation}\Gamma_j^2(s):=\{ y \mid y=x^2, x\in \Gamma_j(s)\}.
\end{equation}

We begin by establishing that $\Gamma_1^2(s)\cup$ $\Gamma_2^2(s)\subset
\mathbb{R}$; the proof of this fact will be very similar to the proof of
Lemma~4.1 in \cite{Roman}. Suppose $y\in \Gamma_1^2(s)\cup \Gamma_2^2(s)$. We
can assume without loss of generality that $y$ is not a branch point, since the
number of branch points is finite and $\Gamma_1(s)$ and $\Gamma_2(s)$ in our
case have no isolated points \cite{Delvaux,Widom1}. Thus there exist distinct
$z_1,z_2\in \mathbb{C}$ such that $|z_1|=|z_2|=:r$ and $P(z_1,s)=y=P(z_2,s)$.
By the factorization of $P$, we see that $z \mapsto P(z,s)$ has only negative
real zeros and therefore the even function $[-\pi,\pi]\ni\theta \mapsto
|P(re^{i\theta},s)|$ is strictly decreasing on $(0,\pi)$, which implies in turn
that $z_1=\overline{z}_2$. But
\begin{equation*}
    y=P(z_2,s)=P(\overline{z}_1,s)=\overline{P(z_1,s)}=\overline{y},
\end{equation*}
so $y$ is real, and hence $\Gamma_1^2(s)\cup$ $\Gamma_2^2(s)\subset \mathbb{R}$.
This argument also shows that  $\Gamma_1^2(s) \cap \Gamma_2^2(s)$ may contain
only branch points, since otherwise there would be three distinct values $z_1, z_2, z_3$
with the same modulus and $y = P(z_1,s) = P(z_2,s) = P(z_3,s)$ which is clearly
impossible.


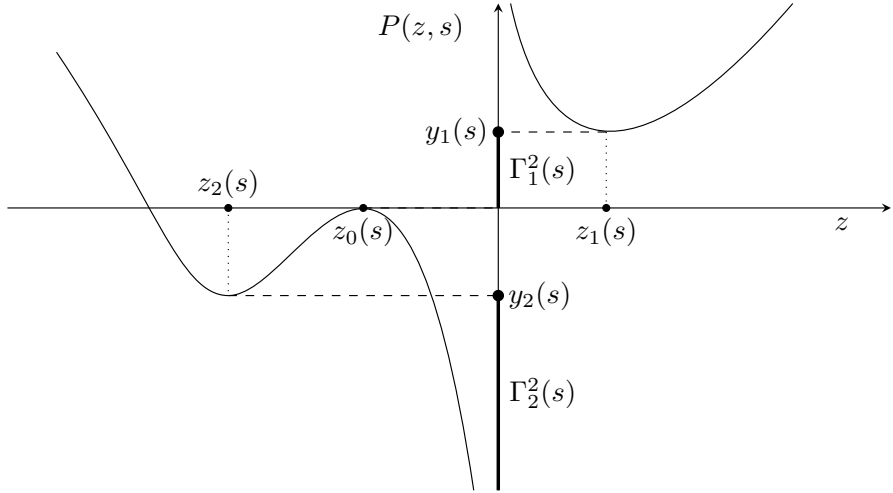
\begin{figure}
\begin{center}
\resizebox{.8\textwidth}{!}{
\begin{tikzpicture}[scale=0.6,>=stealth]
\draw[->] (-10,0.8) -- (8,0.8); \draw[->] (0,-5) -- (0,5);

\draw (-0.5,-5.0) .. controls (-2,5) and (-4,-1) ..  (-5.5,-1.0);

\draw (-5.5,-1) .. controls (-6.5,-1) and  (-7,1) .. (-9,4);

\draw (0.25,5) .. controls (1,1.5) and (3,1.5) ..  (6,5);

\draw[fill=black] (-5.5,0.8) circle (2pt) node[above] {$z_2(s)$};
\draw[style=dotted] (-5.5,0.8) -- (-5.5,-1);




\draw (-.5,4.5) node[left]{$P(z,s)$}; \draw (7,0.8) node[below]{$z$};

\draw[fill=black] (-2.75,0.8) circle (2pt) node[below] {$z_0(s)$};

\draw[fill=black] (2.2,0.8) circle (2pt) node[below] {$z_1(s)$};
\draw[style=dotted] (2.2,2.36) -- (2.2,0.8);

\draw[style=dashed] (2.2,2.36) -- (0,2.36);
\draw[style=dashed] (-2.75,0.8) -- (0,0.8);
\draw[style=dashed] (-5.5,-1) -- (0,-1);

\draw[fill=black] (0,2.36) circle (3pt) node[left]{$y_1(s)$};
\draw[fill=black] (0,-1) circle (3pt)node[right]{$y_2(s)$} ;

\draw[very thick] (0,0.8) -- (0,2.36) node[midway,right] {$\Gamma_1^2(s)$};
\draw[very thick] (0,-5) -- (0,-1) node[midway,right] {$\Gamma_2^2(s)$};
\end{tikzpicture}}
\end{center}
\caption{Plot of $P(z,s)$, $z\in \mathbb{R}$.}\label{Plot_symbol}
\end{figure}

Studying the function $\mathbb{R}\ni z\mapsto P(z,s)$ for fixed $s>0$, we see
that it has two local minima at the points
\begin{equation*}
z_{1} =\frac{1}{b^2}\left(-1+\sqrt{\frac{9b^2+1}{b^2+1}}\right)s^{-2},
\end{equation*}
and
 \begin{equation} \label{z2s}
 z_{2}=-\frac{1}{b^2}\left(1+\sqrt{\frac{9b^2+1}{b^2+1}}\right)s^{-2},
 \end{equation}
 with
\begin{equation}\label{squaredsupport1}
y_{1}(s):=P(z_1,s)=s^2\left(\frac{27b^4+18b^2-1+\sqrt{b^2+1}(9b^2+1)^{3/2}}{32b^2}\right)=(c_1s)^2\geq
0,
\end{equation}
and
\begin{equation}\label{squaredsupport2}
y_{2}(s):=P(z_2,s)=s^2\left(\frac{27b^4+18b^2-1-\sqrt{b^2+1}(9b^2+1)^{3/2}}{32b^2}\right)=(ic_2 s)^2\leq
0,
\end{equation}
and that $0$ is a local maximum attained at the point
\begin{equation*}z_0(s)=-\frac{4}{1+b^2}s^{-2}
\end{equation*}
(see Fig. \ref{Plot_symbol}).

Since
\begin{equation}\label{algebraicequation:P2}
P(z,s)=y
\end{equation}
is a polynomial equation in $z$ with real coefficients, it has two complex
conjugate solutions if $y\in (-\infty,y_2(s))\cup (0,y_1(s))$. If
$y\in\er\setminus\left( (-\infty,y_2(s)]\cup [0,y_1(s)]\right)$, then all three
solutions to \eqref{algebraicequation:P2} are real and it is easy to see that
there can be at most a finite number of such $y$ for which two of these roots
have the same modulus. But as already mentioned, $\Gamma_1(s)$ and
$\Gamma_2(s)$ cannot have isolated points, hence
$\Gamma_1^2(s)\cup\Gamma_2^2(s)\subset (-\infty,y_2(s)]\cup [0,y_1(s)]$.

Now consider the interval $[0,y_1(s)]$. For the branch point $y=y_1(s)$, we have
that $z_1(s)>0$
is a double root to \eqref{algebraicequation:P2}, and there is also a negative
root $z_-(s)$ which is smaller than $z_2(s)$ (see Fig.~\ref{Plot_symbol}).
Therefore
\begin{equation*}
|z_1(s)|-|z_-(s)|< z_1(s)+z_2(s) =-\frac{2}{b^2s^2}<0,
\end{equation*}
i.e., the real negative root $z_-(s)$ has larger modulus than the double
positive root $z_1(s)$, so $y_1(s)\in \Gamma_1^2(s) \setminus \Gamma_2^2(s)$.
In a similar way it follows that $0 \in \Gamma_1^2(s) \setminus \Gamma_2^2(s)$
and $y_2(s) \in \Gamma_2^2(s) \setminus \Gamma_1^2(s)$. Thus the branch points are
not in $\Gamma_1^2(s) \cap \Gamma_2^2(s)$, and therefore
\[ \Gamma_1^2(s) \cap \Gamma_2^2(s) = \emptyset, \]
since we already proved that non-branch points are not in the intersection.

We have now established that
\[ \Gamma_1^2(s) \cup \Gamma_2^2(s) = [0, (c_1s)^2] \cup (-\infty, -(c_2s)^2] \]
with $0, (c_1 s)^2 \in \Gamma_1^2(s)$ and $-(c_2s)^2 \in \Gamma_2^2(s)$.
We also established that $\Gamma_1^2(s) \cap \Gamma_2^2(s) = \emptyset$, and then
it follows by continuity that
\[ \Gamma_1^2(s) = [0, (c_1s)^2], \qquad \Gamma_2^2(s) = (-\infty, -(c_2s)^2]. \]
This completes the proof.


\end{proof}

\subsection{Equilibrium problem}
\label{subsection:equilibriumproblem2}
Given a measure $\mu$ in
the complex plane, define the \emph{logarithmic potential} of $\mu$,
\begin{equation*}
\mathcal{U}^{\mu}(x)=\int \log\frac{1}{|y-x|}\,\ud \mu (y).
\end{equation*}
For  $j=1,2$, define the measures
\begin{equation}\label{explicitdensity}
\ud \mu_j^s(x)= \frac{1}{2}\frac{1}{2\pi i}
\left(\frac{z_{j+}' (x,s)}{z_{j+}(x,s)}
-\frac{z_{j-}'(x,s)}{z_{j-}^{}(x,s)}\right)\,\ud x,
\end{equation}
where $\ud x$ is the (complex) line element on 
$\Gamma_j(s)$. 

\begin{proposition}\label{equilibriumdelvaux} 
Fix $s\geq 0$. The pair $(\mu_1^s,\mu_2^s)$ is the unique minimizer
 of the energy functional
\begin{equation*}
J(\mu,\nu)=I(\mu,\mu)+I(\nu,\nu)-I(\mu,\nu)
\end{equation*}
among all pairs $(\mu,\nu)$ of positive measures such that $\operatorname{supp}
\mu \subset \Gamma_1(s)$, $\int \ud \mu=1$; $\operatorname{supp} \nu \subset
\Gamma_2(s)$ and $\int \ud \nu=1/2$.

For all $x\in \mathbb{C}$, $(\mu_1^s,\mu_2^s)$ satisfies the Euler-Lagrange variational conditions

 \begin{equation}\label{explicitequilibrium}
\left\{\begin{array}{ll}
2\mathcal{U}^{\mu_1^s}(x)-\mathcal{U}^{\mu_2^s}(x)-l^{s}=-\frac{1}{2}\log
\displaystyle \frac{|z_2(x,s)|}{|z_1(x,s)|},  \\
-\mathcal{U}^{\mu_1^s}(x)+2\mathcal{U}^{\mu_2^s}(x)=-\frac{1}{2}\log
\displaystyle \frac{|z_3(x,s)|}{|z_2(x,s)|},
\end{array}
\right.
\end{equation}
for some constant $l^s$.
\end{proposition}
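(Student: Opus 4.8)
The plan is to read \eqref{explicitequilibrium} as the Euler--Lagrange system of a vector equilibrium problem and to split the statement into a soft part (existence and uniqueness, from convexity) and a hard part (the explicit form of the conditions). The quadratic form $J(\mu,\nu)=I(\mu,\mu)+I(\nu,\nu)-I(\mu,\nu)$ has interaction matrix $\left(\begin{smallmatrix}2&-1\\-1&2\end{smallmatrix}\right)$, which is positive definite; hence $J$ is strictly convex on the convex class of admissible pairs. Consequently there is at most one minimizer, and any admissible pair satisfying the variational equalities on its (full) supports is automatically the unique global minimizer. So it suffices to produce one such pair, and I claim it is $(\mu_1^s,\mu_2^s)$. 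Its admissibility --- $\mu_1^s\geq0$ of mass $1$ on $\Gamma_1(s)$, and $\mu_2^s\geq0$ of mass $1/2$ on $\Gamma_2(s)$ --- comes from Lemma~\ref{lemma:mu0prob} and Proposition~\ref{monotonoussupports} for $\mu_1^s$, and by the same reasoning (cf.\ \cite{Delvaux}) for $\mu_2^s$; this would settle existence, uniqueness and the identification simultaneously, once the variational equations are checked.

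The heart of the proof is a dictionary between the logarithmic potentials and the sheets $z_j$ of \eqref{algebraicequation:P}. Using the Plemelj--Sokhotski formula I would establish
\[
\mathcal{U}^{\mu_1^s}(x)=\tfrac12\log|z_1(x,s)|,
\qquad
\mathcal{U}^{\mu_2^s}(x)=-\tfrac12\log|z_3(x,s)|+C_3 .
\]
For the first identity: $z_1$ is analytic and nonvanishing on $\cee\setminus\Gamma_1(s)$, the form $\tfrac12\,z_1'/z_1$ decays at infinity because $z_1(x,s)=x^{-2}+\mathcal{O}(x^{-3})$ (Lemma~\ref{lemma:mu0prob}(a)), and across $\Gamma_1(s)$ the two smallest sheets $z_1,z_2$ are interchanged complex conjugates --- precisely the fact used in the proof of Proposition~\ref{monotonoussupports}. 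Hence the jump of $\tfrac12\,z_1'/z_1$ across $\Gamma_1(s)$ equals $2\pi i$ times the density \eqref{explicitdensity} of $\mu_1^s$, and Plemelj identifies $\tfrac12\,z_1'/z_1$ with $-\int(x-y)^{-1}\,\ud\mu_1^s(y)$; integrating and taking real parts yields the identity, the additive constant being $0$ by matching the expansions $\tfrac12\log|z_1|=-\log|x|+\mathcal{O}(1/|x|)$ and $\mathcal{U}^{\mu_1^s}(x)=-\log|x|+\mathcal{O}(1/|x|)$ as $x\to\infty$. The second identity is the mirror computation on $\Gamma_2(s)$, where $z_2,z_3$ are the interchanged conjugate sheets and $z_3$ grows linearly; matching at infinity fixes $C_3$ and confirms the mass $1/2$ of $\mu_2^s$.

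With the dictionary in hand, \eqref{explicitequilibrium} becomes algebra. Since $z_1z_2z_3=-1/(a_s^2b_s^2)$ is independent of $x$, I can eliminate $z_2$ via $\log|z_2|=\log|z_1z_2z_3|-\log|z_1|-\log|z_3|$, and substituting the two identities shows that
\[
2\mathcal{U}^{\mu_1^s}-\mathcal{U}^{\mu_2^s}+\tfrac12\log\tfrac{|z_2|}{|z_1|}
\quad\text{and}\quad
-\mathcal{U}^{\mu_1^s}+2\mathcal{U}^{\mu_2^s}+\tfrac12\log\tfrac{|z_3|}{|z_2|}
\]
are constant in $x$; call them $l^s$ and $\ell_2$. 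The one genuinely delicate point is that $\ell_2=0$: the combination $-\mathcal{U}^{\mu_1^s}+2\mathcal{U}^{\mu_2^s}$ tends to $0$ at infinity, because the masses $1$ and $1/2$ make the $\log|x|$ terms cancel, and since this combination is constant on the \emph{unbounded} set $\Gamma_2(s)$ the constant must equal that limit; by contrast $\Gamma_1(s)=[-c_1s,c_1s]$ is bounded, which is why $l^s$ survives as a nonzero Robin constant in the first equation. This establishes \eqref{explicitequilibrium} for all $x$, and restricting to $\Gamma_1(s)$ and $\Gamma_2(s)$ (where the right-hand sides vanish, since $|z_1|=|z_2|$ and $|z_2|=|z_3|$ there) gives the equilibrium equalities that, together with strict convexity, certify $(\mu_1^s,\mu_2^s)$ as the minimizer.

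I expect the main obstacle to be the second step: making the Plemelj inversion fully rigorous requires controlling the analyticity, nonvanishing and decay of each $z_j$ off $\Gamma_j(s)$, the integrability of $z_j'/z_j$ at the branch points, and the careful tracking of additive constants so that the second variational equation emerges with no constant. This is essentially the content of the general block-Toeplitz theory of \cite{Delvaux} (see also \cite{Duits}) specialized to the cubic \eqref{algebraicequation:P}, so in the write-up I would lean on that reference for the abstract parts and supply only the explicit computation of the constants peculiar to our symbol.
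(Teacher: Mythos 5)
The paper's own ``proof'' of this proposition is a single citation to the general block-Toeplitz equilibrium theory of \cite{Delvaux}; you have instead reconstructed the argument by hand for the specific cubic symbol \eqref{algebraicequation:P}, and your reconstruction is essentially sound. The dictionary $\mathcal{U}^{\mu_1^s}(x)=\tfrac12\log|z_1(x,s)|$ and $\mathcal{U}^{\mu_2^s}(x)=-\tfrac12\log|z_3(x,s)|+C_3$ is exactly the mechanism underlying \cite{Delvaux,Duits,Widom1}: you correctly use $z_3$ rather than $z_2$ for the second identity (since $z_2$ also jumps across $\Gamma_1(s)$ and is not analytic off $\Gamma_2(s)$), the constancy of $z_1z_2z_3=-1/(a_s^2b_s^2)$ does make both variational combinations globally constant, and the normalization at infinity (masses $1$ and $1/2$ cancelling the $\log|x|$ terms, with $|z_2|/|z_3|\to 1$) correctly forces the constant in the second equation to vanish while leaving the Robin constant $l^s$ in the first. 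Your convexity argument for uniqueness is also the standard one and works because $\mu_1^s$ and $\mu_2^s$ have full supports $\Gamma_1(s)$ and $\Gamma_2(s)$, so the inequality part of the Euler--Lagrange conditions is vacuous. What your sketch buys is an explicit, self-contained verification specialized to this symbol; what it still borrows from \cite{Delvaux} (and should be stated as borrowed) are the facts that $\mu_2^s$ is a \emph{positive} measure --- the Plemelj computation alone gives a real measure of total mass $1/2$, not its sign --- and the technical underpinnings on the unbounded set $\Gamma_2(s)$: finiteness of $I(\nu,\nu)$ and $I(\mu,\nu)$, positive-definiteness of the energy form for signed measures with non-compact support, and integrability of $z_j'/z_j$ at the branch points. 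None of these is a fatal gap, but since they are precisely the ``abstract parts'' the paper outsources, your write-up should make the division of labor explicit rather than leaving it implicit in the final paragraph.
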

\begin{proof} See \cite{Delvaux}.
\end{proof}

We will now integrate
(\ref{explicitequilibrium}) to get an equilibrium problem for $(\nu_1,\nu_2)$,
where
\begin{equation}\nu_2:=\int_0^1\mu_2^s\,\ud s,
\end{equation}
in analogy with the definition of $\nu_1$ in (\ref{def:nu_1}).
\begin{proposition}\label{integratedproposition}
For all complex $x$, the vector $(\nu_1,\nu_2)$ of measures satisfies the
following conditions. Firstly,
\begin{equation}\label{integratedequilibrium1}
2\mathcal{U}^{\nu_1}(x)-\mathcal{U}^{\nu_2}(x)-l+V(x)
 \geq 0, \ \ 
\end{equation}
where
\begin{equation*}
V(x)=\frac{1}{2}\int_{0}^{\infty} \log \frac{|z_2(x,s)|}{|z_1(x,s)|}\,\ud s
\end{equation*}
and $l$ is some constant, with equality in \eqref{integratedequilibrium1} if
and only if $x\in[-c_1,c_1]$. Secondly,
\begin{equation}\label{integratedequilibrium2}
-\mathcal{U}^{\nu_1}(x)+2\mathcal{U}^{\nu_2}(x)\geq 0,
\end{equation}
with equality if and only if $x\in i\mathbb{R}\setminus (-ic_2,ic_2)$.

Furthermore, $\nu_2\leq \sigma$ where
\begin{equation}\label{def:sigma}
 \sigma=\int_0^{\infty}\,\mu_2^s\,\ud s.
\end{equation}
\end{proposition}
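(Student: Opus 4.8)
The plan is to obtain \eqref{integratedequilibrium1}--\eqref{integratedequilibrium2} by integrating the fixed-$s$ variational identities of Proposition~\ref{equilibriumdelvaux} over $s\in[0,1]$. Since the logarithmic potential depends linearly on its measure, we have $\mathcal{U}^{\nu_j}=\int_0^1\mathcal{U}^{\mu_j^s}\,\ud s$ for $j=1,2$, and setting $l:=\int_0^1 l^s\,\ud s$ turns the two equalities in \eqref{explicitequilibrium}---which hold for every $x\in\cee$---into exact identities for the pair $(\nu_1,\nu_2)$. The inequalities and their equality sets are then read off from the sign and the zero set of the integrated logarithmic ratios, using that $\Gamma_1(s)$ and $\Gamma_2(s)$ are monotone in $s$ by Proposition~\ref{monotonoussupports}.

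For the first condition I would integrate the first line of \eqref{explicitequilibrium} and split the resulting $s$-integral of $-\tfrac12\log\tfrac{|z_2|}{|z_1|}$ over $[0,1]$ as its integral over $[0,\infty)$, equal by definition to $-V(x)$, minus the tail over $[1,\infty)$. Moving $V$ to the left then gives the identity
\begin{equation*}
2\mathcal{U}^{\nu_1}(x)-\mathcal{U}^{\nu_2}(x)-l+V(x)=\frac{1}{2}\int_1^\infty\log\frac{|z_2(x,s)|}{|z_1(x,s)|}\,\ud s .
\end{equation*}
Because $|z_1|\le|z_2|$ by the ordering \eqref{roots}, the right-hand side has a definite sign, giving \eqref{integratedequilibrium1}; and it vanishes exactly when $x\in\Gamma_1(s)$ for all $s\ge1$. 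Since $\Gamma_1(s)=[-c_1s,c_1s]$ is increasing, $\bigcap_{s\ge1}\Gamma_1(s)=[-c_1,c_1]$, yielding the claimed equality set.

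The second condition I would obtain by integrating the second line of \eqref{explicitequilibrium} directly over $[0,1]$, giving $-\mathcal{U}^{\nu_1}+2\mathcal{U}^{\nu_2}=-\tfrac12\int_0^1\log\tfrac{|z_3|}{|z_2|}\,\ud s$; the sign of this integral, governed by $|z_2|\le|z_3|$, produces \eqref{integratedequilibrium2}, with equality precisely where $x\in\Gamma_2(s)$ for every $s\in(0,1]$. As $\Gamma_2(s)=i\er\setminus(-ic_2s,ic_2s)$ is nested decreasingly in $s$ and the excluded segment grows to $(-ic_2,ic_2)$ at $s=1$, the intersection $\bigcap_{0<s\le1}\Gamma_2(s)=i\er\setminus(-ic_2,ic_2)$, as claimed. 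The bound $\nu_2\le\sigma$ is then immediate from \eqref{def:sigma}, since $\sigma-\nu_2=\int_1^\infty\mu_2^s\,\ud s$ is an integral of positive measures.

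The main obstacle is analytic rather than algebraic: justifying the interchange $\mathcal{U}^{\int_0^1\mu_j^s\,\ud s}=\int_0^1\mathcal{U}^{\mu_j^s}\,\ud s$ via Tonelli's theorem, together with the convergence of the $s$-integrals defining $V$ and $l$. Both are controlled by the homogeneity $P(z,s)=s^2 P(s^2 z,1)$ of \eqref{def:P}, which gives $z_j(x,s)=s^{-2}z_j(x/s,1)$ and hence $\Gamma_j(s)=s\,\Gamma_j(1)$: for fixed $x$ the integrands $\log\tfrac{|z_2|}{|z_1|}$ and $\log\tfrac{|z_3|}{|z_2|}$ vanish for all $s$ large enough that $x$ enters the growing set $\Gamma_1(s)$, respectively $\Gamma_2(s)$, so the $s$-integrals are effectively over bounded intervals. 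The only genuine check is integrability as $s\to0^+$, where the same scaling shows the integrands grow at most like $\log(1/s)$ and are therefore integrable.
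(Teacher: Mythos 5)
Your proposal follows essentially the same route as the paper's proof: integrate the fixed-$s$ variational conditions of Proposition~\ref{equilibriumdelvaux} over $s\in[0,1]$, interchange the $s$-integral with the logarithmic potential, compare the resulting $[0,1]$-integrals with the $[0,\infty)$-integrals defining $V$ and $\sigma$ (your tail integral over $[1,\infty)$ is just the paper's inequality $\int_0^1\le\int_0^\infty$ written as an identity), and read off the equality sets from the monotonicity of $\Gamma_1(s)$ and $\Gamma_2(s)$ established in Proposition~\ref{monotonoussupports}. Your explicit justification of the Fubini step and of the convergence of the $s$-integrals via the scaling $z_j(x,s)=s^{-2}\tilde z_j(x/s)$ is slightly more detailed than the paper's, but the argument is the same.
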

\begin{proof}
By Proposition \ref{monotonoussupports}, the supports of the measures $\mu_1^s$
and $\mu_2^s$ are subsets of the real and imaginary lines respectively, which
increase/decrease linearly in $s$. This means that we can integrate the
logarithmic potentials of $\mu_1^s$ and $\mu_2^s$  with respect to $s$ and
change the order of integration to obtain
\begin{align*}
  \int_0^1 \mathcal{U}^{\mu_1^s}(x) \ud s
    & =\int_0^1 \int_{-c_1s}^{c_1s} \log \frac{1}{|y-x|}\ud \mu_1^s(y) \ud s \\
   & =\int_{-c_1}^{c_1}\int_0^{|y|/c_1} \log\frac{1}{|y-x|}\,\ud s\,\ud \mu_1^s (y)
  =:\mathcal{U}^{\nu_1}(x)
 \end{align*}
and similarly for $\nu_2$.
The integrated variational conditions  (\ref{explicitequilibrium}) thus become
\begin{equation}\label{integratedequilibrium1a}
2\mathcal{U}^{\nu_1}(x)-\mathcal{U}^{\nu_2}(x)-\int_0^s l^s\ud s=-\frac{1}{2}\int_0^1\log \frac{|z_2(x,s)|}{|z_1(x,s)|}\ud s, \ \ 
\end{equation}
\begin{equation}\label{integratedequilibrium2a}
-\mathcal{U}^{\nu_1}(x)+2\mathcal{U}^{\nu_2}(x)=-\frac{1}{2}\int_0^1\log \frac{|z_3(x,s)|}{|z_2(x,s)|}\ud s. \ \ 
\end{equation}
Furthermore, since by definition $|z_2(x,s)|\geq |z_1(x,s)|$,
\begin{equation*}
\frac{1}{2}\int_0^1\log \frac{|z_2(x,s)|}{|z_1(x,s)|}\ud s \leq \frac{1}{2}\int_0^{\infty}\log \frac{|z_2(x,s)|}{|z_1(x,s)|}\ud s =V(x)
\end{equation*}
with equality if and only if $x\in \Gamma_1(1) = [-c_1,c_1]$. Clearly, $\nu_2$ must satisfy
\begin{equation}\label{confusing:ref}
\frac{\ud \nu_2}{|\ud x|}=\int_0^1\,\frac{\ud \mu_2^s}{|\ud x|}\,\ud s\leq
\int_0^{|x|/c_2}\,\frac{\ud \mu_2^s}{|\ud x|}\,\ud s =\frac{\ud \sigma
(x)}{|\ud x|},
\end{equation}
since $x \not\in
\operatorname{supp} \mu_2^s$ for $s>|x|/c_2$, with 
equality in \eqref{confusing:ref} if and only if 
$x \in \Gamma_2(1)$. Inserting into  (\ref{integratedequilibrium1a}) and
(\ref{integratedequilibrium2a}) and putting $l:=\int_0^s l^s\ud s$  gives the
stated inequalities.

\end{proof}
Equations (\ref{integratedequilibrium1}) and (\ref{integratedequilibrium2}) are
the Euler-Lagrange variational conditions for the equilibrium problem in
Theorem \ref{theorem:equilibriumproblem}. The external field $V$, density of
the measure $\nu_1$ and upper constraint measure $\sigma$ can be calculated
explicitly, and the following subsections are devoted to these computations.

\subsection{Calculation of $V$}

In the calculations that follow we make use of the function
\begin{equation} \label{defQ}
    Q(z) := \frac{(4+(1+b^2) z)^2(4+b^2 z)}{64z},
    \end{equation}
which is such that
\begin{equation} \label{QandP}
    Q(z) = \frac{1}{s^2} P(z/s^2;s)
    \end{equation}
for every $s > 0$. So in particular $Q(z) = P(z;1)$.
We also define
\[ \tilde{z}_j(x) := z_j(x,1), \qquad j=1,2,3, \]
and these are the solutions of $Q(z) = x^2$. Because of \eqref{QandP}
we have
\begin{equation} \label{zjandtildezj}
    z_j(x,s) = \frac{1}{s^2} \tilde{z}_j\left(\frac{x}{s} \right), \qquad s > 0,
    \end{equation}
and it follows that
\begin{equation} \label{logderzj}
    \frac{1}{z_j(x,s)} \frac{\partial z_j(x,s)}{\partial x} =
    \frac{\tilde{z}_j'(x/s)}{s \tilde{z}_j(x/s)}, \qquad s > 0, \quad j=1,2,3.
    \end{equation}

\begin{proposition} \label{extfieldV}
The external field $V$ is given by
\begin{equation}
    V(x) = \frac{1}{2} \int_0^{\infty} \log \frac{|z_2(x,s)|}{|z_1(x,s)|} ds = (\pi - 2t) |x|,
        \qquad x \in \mathbb R.
        \end{equation}
\end{proposition}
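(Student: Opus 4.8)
The plan is to exploit the scaling relation \eqref{zjandtildezj} to collapse the two-variable problem to a one-variable one, and then to evaluate the resulting constant by an explicit change of variables. First I would observe that in the ratio $|z_2(x,s)|/|z_1(x,s)|$ the prefactors $s^{-2}$ coming from \eqref{zjandtildezj} cancel, so that $\log\frac{|z_2(x,s)|}{|z_1(x,s)|}=\log\frac{|\tilde z_2(x/s)|}{|\tilde z_1(x/s)|}$ depends on $(x,s)$ only through $x/s$. For $x>0$ the substitution $s=x/u$ then turns the defining integral into $V(x)=Cx$ with $C=\frac12\int_0^\infty u^{-2}\log\frac{|\tilde z_2(u)|}{|\tilde z_1(u)|}\,\ud u$; since the whole construction depends on $x$ only through $x^2$, the function $V$ is even, and hence $V(x)=C|x|$ on all of $\mathbb{R}$. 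It then remains to show $C=\pi-2t$.

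To compute $C$ I would integrate by parts, writing $u^{-2}\,\ud u=\ud(-u^{-1})$. The boundary terms vanish: near $u=0$ the two smallest roots are complex conjugate on $\Gamma_1(1)=[-c_1,c_1]$, so the integrand is identically zero there, while as $u\to\infty$ one has $\tilde z_1(u)\sim u^{-2}$ (Lemma~\ref{lemma:mu0prob}(a)) and $\tilde z_2$ grows linearly, whence $\log\frac{|\tilde z_2|}{|\tilde z_1|}=O(\log u)$ and $u^{-1}\log u\to0$. This leaves $C=\int_{c_1}^\infty\bigl(\frac{1}{\tilde z_2\,Q'(\tilde z_2)}-\frac{1}{\tilde z_1\,Q'(\tilde z_1)}\bigr)\,\ud u$, where I used $Q(\tilde z_j)=u^2$ to substitute $\tilde z_j'=2u/Q'(\tilde z_j)$, with $Q$ as in \eqref{defQ}--\eqref{QandP}. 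On $(c_1,\infty)$ all three roots are real (Proposition~\ref{monotonoussupports}), with $\tilde z_1$ sweeping $(0,z_*)$ and $\tilde z_2$ sweeping $(z_*,\infty)$, where $z_*>0$ is the location of the positive local minimum of $Q$, at which $Q(z_*)=c_1^2$. Changing the integration variable in each term from $u$ to $z=\tilde z_j(u)$ via $\ud u=\frac{Q'(z)}{2\sqrt{Q(z)}}\,\ud z$ collapses both integrands to $\frac{\ud z}{2z\sqrt{Q(z)}}$, and the two $z$-ranges join to give $C=\frac12\int_0^\infty\frac{\ud z}{z\sqrt{Q(z)}}$. The delicate point is the root labelling: I must know that $\tilde z_2$ is the positive branch tending to $+\infty$ and not the negative root, which follows because an equality $|\tilde z_2|=|\tilde z_3|$ for some $u\in(c_1,\infty)$ would force $u\in\Gamma_2(1)\subset i\mathbb{R}$, impossible for real $u>0$.

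Finally I would evaluate the elementary integral. With $b=\tan t$ and $z>0$ one has $\sqrt{Q(z)}=\frac{(4+(1+b^2)z)\sqrt{4+b^2z}}{8\sqrt z}$, so after the substitution $z=w^2$ the constant becomes $C=8\int_0^\infty\frac{\ud w}{(4+(1+b^2)w^2)\sqrt{4+b^2w^2}}$. Setting $w=\frac{2}{b}\tan\phi$ reduces the integrand, up to constants, to $\frac{\cos\phi}{1+b^{-2}\sin^2\phi}$, and then $p=\sin\phi$ gives $C=\frac{2}{b}\int_0^1\frac{\ud p}{1+p^2/b^2}=2\arctan(1/b)$. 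Since $\arctan(1/\tan t)=\pi/2-t$ for $t\in(0,\pi/2)$, this yields $C=\pi-2t$, which completes the proof. I expect the main obstacle to be the bookkeeping in the second paragraph: justifying the vanishing boundary terms and, above all, pinning down which root is $\tilde z_2$ so that the two pieces assemble into a single integral over $(0,\infty)$. Once that is settled, the remaining integral is entirely routine.
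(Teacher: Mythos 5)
Your proof is correct and follows essentially the same route as the paper's: both arguments reduce the problem to the single integral $\tfrac12\int_0^\infty \frac{\ud z}{z\sqrt{Q(z)}}$ (you via homogeneity in $x/s$ plus integration by parts in $u$, the paper via differentiation in $x$ followed by the substitution $u=x/s$, which produce the identical intermediate integral over $[c_1,\infty)$) and then evaluate it to $\pi-2t$. The only differences are cosmetic: your trigonometric substitution replaces the paper's explicit arctangent primitive \eqref{primitive}, and your continuity argument (no crossing $|\tilde z_2|=|\tilde z_3|$ on $(c_1,\infty)$ since $\Gamma_2(1)\subset i\mathbb{R}$) makes explicit a root-labelling point that the paper leaves implicit.
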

\begin{proof}
Because of symmetry, we may assume $x > 0$.

It follows from Proposition \ref{monotonoussupports} that $\Gamma_1(s)$ is increasing
with $s$ and $x \in \Gamma_1(s)$ if and only if $s \geq x/c_1$. Thus $|z_2(x,s)| = |z_1(x,s)|$
for $s \geq x/c_1$, and the integral that defines $V(x)$ can be restricted to an integral
over $s \in [0, x/c_1]$. Using \eqref{logderzj} we obtain
\begin{align}
    \frac{\ud V(x)}{\ud x} &=  \frac{1}{2} \int_0^{x/c_1}
    \left(\frac{1}{z_2(x,s)}\frac{\partial z_2(x,s)}{\partial x} -
          \frac{1}{z_1(x,s)}\frac{\partial z_1(x,s)}{\partial x} \right) \ud s \nonumber\\
    &= \frac{1}{2} \int_0^{x/c_1} \left( \frac{\tilde{z}_2'(x/s)}{s \tilde{z}_2(x/s)} -
    \frac{\tilde{z}_1'(x/s)}{s \tilde{z}_1(x/s)} \right) \ud s \nonumber \\
    & = \frac{1}{2} \int_{c_1}^{\infty} \left(\frac{\tilde{z}_2'(u)}{u \tilde{z}_2(u)} -
    \frac{\tilde{z}_1'(u)}{u \tilde{z}_1(u)} \right) \ud u
    \label{dvdx}
    \end{align}
where in the last step we made the change of variables $u = x/s$ (recall that $x > 0$).
Note that \eqref{dvdx} does not depend on $x$. To evaluate \eqref{dvdx} we note that
both $u \mapsto \tilde{z}_1(u)$ and $u \mapsto \tilde{z}_2(u)$ are one-to-one for $u \in [c_1,\infty)$
and they map the interval $[c_1, \infty)$ onto $(0, \tilde{z}_1(c_1)]$ and $[\tilde{z}_2(c_1), \infty)$,
respectively. We split the integral  \eqref{dvdx} into two integrals, and apply
a change of variables $z = \tilde{z}_j(u)$ to each of them. Then combining the
two integrals again, and noting that $\tilde{z}_1(c_1) = \tilde{z}_2(c_1)$ and that
$u = \sqrt{Q(z)}$ if  $z = \tilde{z}_j(u)$ with $j=1,2$, we obtain
\begin{align} \label{dvdx2}
    \frac{\ud V(x)}{\ud x}  = \frac{1}{2} \int_0^{\infty}
    \frac{\ud z}{z \sqrt{Q(z)}}, \qquad x > 0.
    \end{align}

The integral in \eqref{dvdx2} can be calculated explicitly, since
\begin{equation} \label{primitive}
    \frac{\ud}{\ud z} \left[ \arctan \left( \frac{4 + (b^2-1)z}{2\sqrt{z(4+b^2 z)}}\right) \right]
    = \frac{4}{(4 + (1+b^2)z) \sqrt{z(4+b^2z)}} = - \frac{1}{2 z \sqrt{Q(z)}}.
    \end{equation}
Therefore
\begin{align*}
    \frac{\ud V(x)}{\ud x} & = \lim_{z\to 0+} \arctan \left( \frac{4 + (b^2-1)z}{2\sqrt{z(4+b^2 z)}}\right)
        - \lim_{z \to +\infty} \arctan \left( \frac{4 + (b^2-1)z}{2\sqrt{z(4+b^2 z)}}\right) \\
        & = \frac{\pi}{2} - \arctan \left( \frac{b^2-1}{2b} \right).
        \end{align*}
Using that $ b= \tan t$ and applying trigonometric identities, we finally obtain
\begin{equation} \label{dvdx3}
    \frac{\ud V}{\ud x} = \pi - 2t.
    \end{equation}
Noting that $V(0) = 0$ since $0 \in \Gamma_1(s)$ for every $s > 0$, we find
the claimed expression for the external field by integrating \eqref{dvdx3}
with respect to $x$.
\end{proof}

\begin{remark}
Note that the external field $V$ has the form to be expected by analyzing
directly the asymptotics of the weight functions $w_i$: For the rescaled
polynomials $Q_{k,n}$ the orthogonality conditions read
\begin{equation*}
\int_{-\infty}^{\infty} Q_{k,n}(x)x^m w_j(nx)\,\ud x= 0, \quad m=0,\ldots,k_j-1,\quad
j=1,2.
\end{equation*}
Thus we have new effective weights $\tilde{w}_j(x)=w_j(nx)$. Using Stirling's formula,
\begin{align*}
\tilde{w}_j(x)&=\frac{1}{2\pi}e^{2t_jnx}\left|\Gamma\left(\lambda+inx\right)\right|^2\\
&=e^{-2\lambda}\left|\lambda+inx\right|^{2\lambda-1}e^{2t_jnx
-2nx\arg \left(\lambda+inx\right)}(1+o(1))\\
&=e^{n(2t_jx - \pi |x|)(1+o(1))}.
\end{align*}
Asymptotically, the dominant weight determining the potential associated with
the distribution of zeros, will be
$\max\{\tilde{w}_1(x),\tilde{w}_2(x)\}=e^{-n\left(\pi-2t\right)|x|(1 +o(1))}$,
giving the external field $V(x)=\left(\pi-2t\right)|x|$.
\end{remark}

\subsection{Density of $\nu_1$}

Next we turn to the density of $\nu_1$.
\begin{proposition}\label{propdensity}
The measure $\nu_1$ is absolutely continuous with density given by
\eqref{densityrel1bis}--\eqref{densityrel3}.
\end{proposition}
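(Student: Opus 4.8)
The plan is to compute the Cauchy (Stieltjes) transform of $\nu_1$ in closed form and then recover the density from its boundary-value jump across the support. Introduce the resolvent-type function $g(x) := \frac12\int_0^1 \frac{z_1'(x,s)}{z_1(x,s)}\,\ud s$, which is analytic on $\cee\setminus[-c_1,c_1]$. From \eqref{explicitdensity} the density of each $\mu_1^s$ is the jump of $\frac12\frac{z_1'(x,s)}{z_1(x,s)}$, and since $\nu_1=\int_0^1\mu_1^s\,\ud s$ with all supports contained in $[-c_1,c_1]$ by Proposition \ref{monotonoussupports}, one has $\frac{\ud\nu_1}{\ud x}(x)=\frac{1}{2\pi i}\bigl(g_+(x)-g_-(x)\bigr)$ for $x\in(-c_1,c_1)$.

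First I would evaluate $g$ using the self-similarity already exploited for $V$. By \eqref{logderzj}, $\frac{z_1'(x,s)}{z_1(x,s)}=\frac{\tilde z_1'(x/s)}{s\,\tilde z_1(x/s)}$, and (taking $x>0$ by symmetry) the substitution $u=x/s$, under which $\frac{1}{s}\ud s=-\frac{1}{u}\ud u$ and $s\in(0,1]$ maps to $u\in[x,\infty)$, collapses the $s$-integral to $g(x)=\frac12\int_x^\infty\frac{\tilde z_1'(u)}{u\,\tilde z_1(u)}\,\ud u$.

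The crucial observation, and the main point of the argument, is that this integrand is an exact derivative expressible through the very primitive \eqref{primitive} used for the external field. Setting $w(z):=\frac{4+(b^2-1)z}{2\sqrt{z(4+b^2z)}}$, which is exactly the argument of the arctangent in \eqref{primitive} and which coincides with \eqref{densityrel2} once one uses $|x|=\sqrt{Q(z)}$ to clear the square root, \eqref{primitive} reads $\frac{\ud}{\ud z}\arctan w(z)=-\frac{1}{2z\sqrt{Q(z)}}$. Differentiating $\arctan w(\tilde z_1(u))$ along the curve $Q(\tilde z_1(u))=u^2$ and using $\sqrt{Q(\tilde z_1(u))}=u$ gives $\frac{\tilde z_1'(u)}{u\,\tilde z_1(u)}=-2\frac{\ud}{\ud u}\arctan w(\tilde z_1(u))$. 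Hence $g(x)=-\bigl[\arctan w(\tilde z_1(u))\bigr]_x^\infty=\arctan w(\tilde z_1(x))-\tfrac{\pi}{2}$, where I use that $\tilde z_1(u)\to0^+$ as $u\to\infty$ by Lemma \ref{lemma:mu0prob}(a), so that $w\to+\infty$ and $\arctan w\to\pi/2$.

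It then remains to take boundary values and read off the density. For $x\in(0,c_1)$ the two smallest roots $\tilde z_{1\pm}(x)$ are complex conjugates, so $w_\mp:=w(\tilde z_{1\mp}(x))$ are conjugate and, $\arctan$ having real Taylor coefficients, $g_+(x)-g_-(x)=\arctan w_+-\arctan w_-=2i\,\operatorname{Im}\arctan w_+$, whence $\frac{\ud\nu_1}{\ud x}=\frac1\pi\operatorname{Im}\arctan w_+$. Using $\arctan w=\frac{1}{2i}\log\frac{1+iw}{1-iw}$ turns this into $\operatorname{Im}\arctan w_+=-\operatorname{Im}\arctan w_-=\tfrac12\log\bigl|\tfrac{1+iw_-}{1-iw_-}\bigr|$, and choosing $w:=w_-$, the branch with $\operatorname{Im} w<0$, produces precisely \eqref{densityrel1bis}; positivity of the density is automatic, since this branch makes the logarithm nonnegative. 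The genuine obstacle is the identification of the third paragraph, namely recognizing $\frac{\tilde z_1'(u)}{u\,\tilde z_1(u)}$ as $-2\frac{\ud}{\ud u}\arctan w$; once this is in hand, the remainder is change-of-variables and boundary-value bookkeeping. A secondary technical point to handle with care is the consistent choice of the square-root branch in $w(z)$ along $\tilde z_{1\pm}$ together with the fact that $\tilde z_{1-}=\overline{\tilde z_{1+}}$ on $(0,c_1)$, both of which follow from the structure of $Q$ established in Proposition \ref{monotonoussupports}.
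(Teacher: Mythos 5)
Your proposal is correct and follows essentially the same route as the paper: the scaling identity \eqref{logderzj}, the substitution $u=x/s$, the primitive \eqref{primitive} yielding $\arctan w$, the complex-conjugacy of $\tilde z_{1\pm}(x)$ on $(0,c_1)$, and the $\arctan$-to-$\log$ identity are exactly the ingredients of the paper's proof. The only difference is organizational: you first evaluate the Cauchy-transform-type function $g$ off the cut in closed form and then take its boundary jump, whereas the paper integrates the density over $s$ directly and converts it into a contour integral from $\tilde z_{1+}(x)$ to $\tilde z_{1-}(x)$ before applying the same primitive.
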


\begin{proof}
By \eqref{def:nu_1} and (\ref{explicitdensity}),
\begin{equation*}
\frac{\ud \nu_1}{\ud x} = \frac{1}{2}\frac{1}{2\pi i}\int_{|x|/c_1}^1\,
    \left(\frac{z_{1+}'(x,s)}{z_{1+}(x,s)}
 -\frac{z_{1-}'(x,s)}{z_{1-}(x,s)}\right)\,\ud s, \qquad x \in [-c_1,c_1].
\end{equation*}
We now make essentially the same calculations as in the proof of Proposition
\ref{extfieldV}. Assuming $x \in (0,c_1)$ and using \eqref{logderzj} we obtain
as in \eqref{dvdx}
\begin{align*}
\frac{\ud \nu_1}{\ud x}(x)
        = \frac{1}{2} \frac{1}{2\pi i} \int_x^{c_1} \left(
            \frac{\tilde{z}_{1+}'(u)}{u \tilde{z}_{1+}(u)} -
            \frac{\tilde{z}_{1-}'(u)}{u \tilde{z}_{1-}(u)} \right) \ud u.
            \end{align*}
A change of variables $z = \tilde{z}_{1\pm}(u)$ leads to
\begin{equation} \label{dnu1dx2} \frac{\ud \nu_1}{\ud x}(x) =
    \frac{1}{2} \frac{1}{2\pi i} \int_{\tilde{z}_{1+}(x)}^{\tilde{z}_{1-}(x)} \frac{\ud z}{z \sqrt{Q(z)}},
    \end{equation}
which is an integral in the complex $z$-plane.
Since $x \in (0, c_1)$, we have that $\tilde{z}_{1+}(x)$ and $\tilde{z}_{1-}(x)$ are each others
complex conjugate, and it can be shown that $\operatorname{Im} \tilde{z}_{1+}(x) < 0$. The integral in \eqref{dnu1dx2}
is from $\tilde{z}_{1+}(x)$ in the lower half plane to its complex conjugate in the upper half plane
along a path in $\mathbb C \setminus (-\infty, 0]$. The square root $\sqrt{Q(z)}$ is defined and
analytic in $\mathbb C \setminus (-4 b^{-2}, 0]$ and it is positive for $z$ real and positive.

Let
\begin{equation} \label{wx2}
    w(x)  =  \frac{4 + (b^2-1)z}{2 \sqrt{z(4 + b^2z)}}, \qquad z = \tilde{z}_{1-}(x), \qquad 0 < x < c_1.
    \end{equation}
Then by  \eqref{primitive} and \eqref{dnu1dx2}
\[ \frac{\ud \nu_1}{\ud x}(x) =
    \frac{1}{2\pi i} \left( \arctan ( \overline{w(x)}) - \arctan (w(x)) \right). \]
The arctangent is understood here as an analytic function
\[ \arctan w = \frac{i}{2}  \log \left( \frac{1-iw}{1+iw} \right),
    \qquad w \in \mathbb C \setminus ((-i \infty, -i) \cup (i, i\infty)). \]
Thus
\begin{align*}
    \frac{\ud \nu_1}{\ud x}(x) & = \frac{1}{4\pi i}
        \log \left( \frac{1 - i \overline{w(x)}}{1+i \overline{w(x)}} \cdot \frac{1+iw(x)}{1-iw(x)}    \right) \\
        & = \frac{1}{2\pi} \log \left|\frac{ 1 + i w(x)}{1-iw(x)} \right|.
        \end{align*}
Using the defining relation $Q(\tilde{z}_{1-}(x)) = x^2$ in \eqref{wx2} we can see that
\eqref{wx2} is equal to \eqref{densityrel2}--\eqref{densityrel3} and then \eqref{densityrel1bis} follows.
Note also that $\Im w(x) < 0$ since $\nu_1$ has a positive density.
\end{proof}

\subsection{Calculation of upper constraint measure}

\begin{proposition}\label{propconstraint}
The upper constraint measure $\sigma$ in \eqref{def:sigma} is a multiple of the
Lebesgue measure on the imaginary axis with density $2t/\pi$.
\end{proposition}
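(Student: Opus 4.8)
The plan is to follow exactly the strategy used for the external field in Proposition~\ref{extfieldV} and for the density of $\nu_1$ in Proposition~\ref{propdensity}: reduce the $s$-integral defining $\sigma$ in \eqref{def:sigma} to a \emph{fixed} contour integral of $\ud z/(z\sqrt{Q(z)})$ by means of the scaling relation \eqref{zjandtildezj}--\eqref{logderzj}, and then evaluate it with the arctangent antiderivative \eqref{primitive}. By the symmetry $x\mapsto -x$ it suffices to treat $x=iy$ with $y>0$. Since by Proposition~\ref{monotonoussupports} the point $x=iy$ lies on $\Gamma_2(s)$ precisely when $s\le y/c_2$, the integral over $s\in[0,\infty)$ in \eqref{def:sigma} reduces to an integral over $s\in(0,y/c_2]$, on which $z_{2\pm}(x,s)$ are genuine boundary values.

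First I would insert the density \eqref{explicitdensity} for $\mu_2^s$ into \eqref{def:sigma} and use \eqref{logderzj} to rewrite the logarithmic derivatives of $z_{2\pm}(x,s)$ in terms of $\til z_{2\pm}(x/s)$. The change of variables $u=x/s$ then carries $s\in(0,y/c_2]$ onto the fixed interval $u\in[ic_2,i\infty)$ of the positive imaginary axis, with $\ud s/s=-\ud u/u$, and the resulting integral no longer depends on $y$. This already establishes that $\sigma$ is a constant multiple of Lebesgue measure on $i\er$; it remains to identify the constant.

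Next, in each of the two boundary-value terms I would substitute $z=\til z_{2\pm}(u)$, using that $u=\sqrt{Q(z)}$ when $z=\til z_j(u)$ solves $Q(z)=u^2$, to convert $\frac{\til z_{2\pm}'(u)}{u\,\til z_{2\pm}(u)}\,\ud u$ into $\frac{\ud z}{z\sqrt{Q(z)}}$, exactly as in \eqref{dvdx2} and \eqref{dnu1dx2}. The density of $\sigma$ then becomes $\tfrac12\tfrac{1}{2\pi i}\left(\int_{C_+}-\int_{C_-}\right)\frac{\ud z}{z\sqrt{Q(z)}}$, where $C_\pm$ is the image of $[ic_2,i\infty)$ under $\til z_{2\pm}$. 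Here $\til z_{2-}=\overline{\til z_{2+}}$, so $C_-$ is the complex conjugate of $C_+$. Analyzing the three roots of $Q(z)=u^2$ (their product is the $u$-independent constant $-64/((1+b^2)^2b^2)$), one sees that along $[ic_2,i\infty)$ the roots $\til z_2,\til z_3$ form the conjugate pair, both starting at the negative real double root given by \eqref{z2s} at $s=1$, which lies in $(-4b^{-2},0)$, i.e.\ on the branch cut of $\sqrt{Q}$, and tending to $\pm i\infty$ as $u\to i\infty$. Applying \eqref{primitive}, namely that $\arctan\!\left(\frac{4+(b^2-1)z}{2\sqrt{z(4+b^2z)}}\right)$ is an antiderivative of $-\frac{1}{2z\sqrt{Q(z)}}$, collapses the contour integrals to boundary values of this arctangent at the two endpoints, the branch point and $\infty$, taken on the two sides of the cut.

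The main obstacle is this final step: tracking the correct branch of $\sqrt{z(4+b^2z)}$ (equivalently of the arctangent) along the conjugate contours $C_\pm$ and at the shared endpoints. I expect the two contributions at $\infty$ to coincide, since there $\frac{4+(b^2-1)z}{2\sqrt{z(4+b^2z)}}\to\frac{b^2-1}{2b}$ from both $C_+$ and $C_-$, so they cancel in the difference; what survives is the jump of the arctangent across the cut at the branch point, where $\sqrt{z(4+b^2z)}$ is purely imaginary and changes sign. Evaluating this jump, substituting $b=\tan t$, and applying the same trigonometric simplification that produced \eqref{dvdx3} should yield exactly $2t/\pi$ for the density $\ud\sigma/|\ud x|$; the factor $i$ from $\ud x=i\,\ud y$ accounts for the passage from the complex line element to arc length, and positivity (guaranteed since $\mu_2^s\ge0$) fixes the overall sign.
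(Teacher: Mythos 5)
Your reduction of the density of $\sigma$ to a fixed contour integral of $\ud z/(z\,(Q(z))^{1/2})$ — symmetry, restriction to $s\in(0,|x|/c_2]$, the substitution $u=x/s$ via \eqref{logderzj}, and then $z=\tilde z_{2\pm}(u)$ — is correct and is exactly what the paper does, arriving at \eqref{dsigmadx2}. The problem is in your final step. The branch of $(Q(z))^{1/2}$ that this reduction forces is the one continuous at the common starting point $\tilde z_{2\pm}(ic_2)=z_2(1)\in(-4/b^2,-4/(1+b^2))$, i.e.\ the branch with cuts on $(-\infty,-4/b^2]\cup[0,\infty)$ running through infinity. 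With that branch there is \emph{no} jump of the antiderivative at the shared endpoint $z_2(1)$ (the two contour contributions there cancel exactly), while at the two endpoints at infinity $w(z)=\frac{4+(b^2-1)z}{2\sqrt{z(4+b^2z)}}$ tends to $+\frac{b^2-1}{2b}$ along one contour and to $-\frac{b^2-1}{2b}$ along the other — the opposite of what you claim. This is not a cosmetic issue: a jump of the arctangent across its cut at the branch point is $\pm\pi$ in its real part (plus a purely imaginary part $\frac{i}{2}\log\frac{\kappa\pm1}{\kappa\mp1}$, since $w(z_2(1))$ is purely imaginary of modulus $>1$), so it is independent of $b$ and cannot produce the answer $2t/\pi$; the entire $t$-dependence has to come from the endpoints at infinity through $\arctan\frac{b^2-1}{2b}=2t-\frac{\pi}{2}$, which your proposed cancellation would annihilate. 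If you instead insist on the branch with cut on $[-4/b^2,0]$ (so that the square root really does change sign at $z_2(1)$, as you describe), then the integrand on one of the two contours $C_\pm$ is the negative of the one appearing in \eqref{dsigmadx2}, and the reduction you performed in the previous step is no longer valid.

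There is also a second, quieter gap: even after fixing the branches, the two arctangent values at infinity agree only up to an integer multiple of $\pi$, determined by how the path of $w$ winds around the logarithmic singularities $w=\pm i$, which occur precisely at the double zero $z=-4/(1+b^2)$ of $Q$. This winding contributes a nontrivial $+1$ to the density and must be accounted for. The paper handles both difficulties at once by joining $C_+$ and $C_-$ into a single contour from $-i\infty$ to $+i\infty$ and deforming it onto the positive real axis: the deformation picks up the residue of $1/(z(Q(z))^{1/2})$ at $z=-4/(1+b^2)$ (contributing exactly $1$, see \eqref{polecontribution}) plus the already-computed half-line integral \eqref{dvdx2}--\eqref{dvdx3} (contributing $-1+2t/\pi$, see \eqref{linecontribution}), which sum to $2t/\pi$. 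Your endpoint-evaluation strategy could in principle be made to work, but only after replacing your two central claims (cancellation at infinity, survival of the branch-point jump) by their opposites and explicitly tracking the winding of $w$ around $\pm i$; as written, it would produce an answer that is either $t$-independent or not even real.
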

\begin{proof}
Because of symmetry it is enough to consider the density on the positive
imaginary axis. Let $x \in i \mathbb R^+$.
Then
\[ \frac{\ud \sigma}{|\ud x|}(x) = i \frac{\ud \sigma}{\ud x}(x)
    = \frac{1}{4 \pi} \int_0^{|x|/c_2}
        \left( \frac{z'_{2+}(x,s)}{z_{2+}(x,s)} - \frac{z'_{2-}(x,s)}{z_{2-}(x,s)} \right) \ud s \]
and by a calculation as in the proof of Proposition \ref{extfieldV} this is equal to
\[ \frac{\ud \sigma}{|\ud x|}(x)
    = \frac{1}{4\pi} \int_{ic_2}^{i\infty}
        \left( \frac{\tilde{z}'_{2+}(u)}{u \tilde{z}_{2+}(u)} - \frac{\tilde{z}'_{2-}(u)}{u\tilde{z}_{2-}(u)} \right) \ud u. \]

Now we are going to make the change of variables $z = \tilde{z}_{2\pm}(u)$. Since
$z_{2+}(i\infty) = - i \infty$, $z_{2-}(i \infty) = +i \infty$, and $\tilde{z}_{2+}(ic_2) = \tilde{z}_{2-}(ic_2)$,
we obtain
\begin{equation} \label{dsigmadx2}
    \frac{\ud \sigma}{|\ud x|}(x)
    = -\frac{1}{4\pi} \int_{-i \infty}^{+i\infty} \frac{\ud z}{z (Q(z))^{1/2}}
    \end{equation}
with integration along a contour from $-i \infty$ to $+i \infty$ that intersects the real
line in
\[ \tilde{z}_{2 \pm}(ic_2) =  z_2(1) = - \frac{1}{b^2} \left( 1 + \sqrt{\frac{9b^2+1}{b^2+1}} \right), \]
see \eqref{z2s} and Figure \ref{Plot_symbol} for $z_2(s)$.
It is easy to check (and this can also be seen from Figure \ref{Plot_symbol}) that
\[ -\frac{4}{b^2} <  - \frac{1}{b^2} \left( 1 + \sqrt{\frac{9b^2+1}{b^2+1}} \right) < - \frac{4}{b^2+1}. \]
and so the contour intersects the real line in a point lying in between the simple root and the double root
of $Q(z)$.
The branch of $(Q(z))^{1/2}$ that is used in \eqref{dsigmadx2} is the one that is
defined and analytic in $\mathbb C \setminus (-\infty, -4/b^2] \cup [0, \infty)$ and that is
in $i \mathbb R^+$ for $z = \tilde{z}_{2\pm}(ic_2)$.
Thus
\begin{equation} \label{Qzsqrt}
    (Q(z))^{1/2} = -i (4 + (1+b^2) z)  \frac{(4 + b^2 z)^{1/2}}{8(-z)^{1/2}}
    \end{equation}
with principal branches of the fractional powers.

We now deform the contour in \eqref{dsigmadx2} to the positive real axis.
In doing so, we pick up a residue contribution at $z = - \frac{4}{b^2+1}$, which is
\begin{equation} \label{polecontribution}
    \frac{1}{4\pi} (2 \pi i) \operatorname{Res}\left(\frac{1}{z (Q(z))^{1/2}}, z = - \frac{4}{b^2+1} \right) =  1,
    \end{equation}
since the residue turns out to be $-2i$, and so is independent of $b$.
The deformed contour goes along the positive real axis, starting at $+\infty$ on the lower side,
and ending at $+\infty$ on the upper side of the real axis. It gives the contribution
\[ - \frac{1}{4\pi} \int_0^{+\infty} \left(\frac{1}{z (Q(z))^{1/2}_+} - \frac{1}{z (Q(z))^{1/2}_-} \right) \ud z. \]
With our choice \eqref{Qzsqrt} of square root we have $(Q(z))^{1/2}_+ =
-(Q(z))^{1/2}_- = \sqrt{Q(z)}$ where $\sqrt{\cdot}$ is the positive square root
of a real and positive number. So the contribution from  the positive real line
is the integral
\[ - \frac{1}{2\pi} \int_0^{+\infty} \frac{\ud z}{z \sqrt{Q(z)}} \]
and this integral we already calculated in \eqref{dvdx2} and \eqref{dvdx3}. Its
value is
\begin{equation} \label{linecontribution}
    - \frac{1}{2\pi} \int_0^{+\infty} \frac{\ud z}{z \sqrt{Q(z)}} = -1 + \frac{2t}{\pi}.
    \end{equation}
Adding \eqref{polecontribution} and \eqref{linecontribution} we find
\[ \frac{d\sigma}{|dx|}= \frac{2t}{\pi} \]
and this proves the proposition.
\end{proof}

\subsection{Proof of Theorem \ref{theorem:equilibriumproblem}}
The equilibrium problem has a unique minimizer which will satisfy the
Euler-Lagrange variational conditions, and Proposition
\ref{integratedproposition} shows that $(\nu_1,\nu_2)$ has this property. The
explicit expressions for the external field, upper constraint measure and
density of $\nu_1$ are calculated in Propositions \ref{extfieldV},
\ref{propconstraint} and \ref{propdensity}. $\bol$

\section*{Acknowledgements}
We thank Pavel Bleher, Filippo Colomo and Karl Liechty for useful
discussions and pointers to the literature.

\end{document}